\date{\today}
\begin{document}

\title[Simply-transitive Levi non-degenerate hypersurfaces in $\bbC^3$]
{Classification of simply-transitive
\\ 
Levi non-degenerate hypersurfaces in $\bbC^3$}

\author{Boris Doubrov}
\address{Faculty of Mathematics and Mechanics, 
Belarusian State University, Nezavisimosti avenue
220050, Minsk, Belarus}
\email{doubrov@bsu.by}

\author{Jo\"el Merker}
\address{Laboratoire de Math\'ematiques d'Orsay,
CNRS, Universit\'e Paris-Saclay, 91405 Orsay Cedex, France}
\email{joel.merker@universite-paris-saclay.fr}

\author{Dennis The}
\address{Department of Mathematics and Statistics, 
UiT The Arctic University of Norway, 9037 Troms\o, Norway}
\email{dennis.the@uit.no}

\makeatletter
\@namedef{subjclassname@2020}{%
  \textup{2020} Mathematics Subject Classification}
\makeatother

\subjclass[2020]{Primary:
32V40, 
17B66, 
53A55; 
Secondary:
53C30, 
57M60, 
35B06, 
53A15 
} 

\begin{abstract}
Holomorphically homogeneous CR real hypersurfaces $M^3 \subset \mathbb{C}^2$ were classified by \'Elie Cartan in 1932.  In the next dimension, we complete the classification of simply-transitive Levi non-degenerate hypersurfaces $M^5 \subset \mathbb{C}^3$ using a novel Lie algebraic approach independent of any earlier classifications of abstract Lie algebras.  Central to our approach is a new coordinate-free formula for the fundamental (complexified) quartic tensor.  Our final result has a unique (Levi-indefinite) non-tubular model, for which we demonstrate geometric relations to planar equi-affine geometry.
\end{abstract}

\maketitle

 \section{Introduction}
 \label{S:intro}

In general CR dimension $n \geqslant 1$, the classification of {\em locally homogeneous} real hypersurfaces $M^{2n+1} \subset \C^{n+1}$ (up to local biholomorphisms) is a vast, infinite problem.  In 1932, \'Elie Cartan~{\cite{Cartan-1932-I, Cartan-1932-II}} settled the $n=1$ case, and substantial efforts have been made over the last 20 years to complete the $n=2$ case, cf.~{\cite{Loboda-2000, Loboda-2001, Loboda-2003, Fels-Kaup-2008, CRhom}}.  Most recently, the remaining ``simply-transitive Levi-nondegenerate'' part of the classification was addressed in~{\cite{KL2019, AtL2019, AkL2020, Loboda-2020}} using normal form methods.  The main goal of this article is to unify and complete this final study through a novel approach.  
Our Theorem \ref{thm-main} presents the final classification, which thereby concludes the $n=2$ case.

Local Lie groups are analytic, so homogeneous $M^{2n+1} \subset \C^{n+1}$ may be assumed from the outset to be real analytic ($\mathcal{C}^\omega$). By Lie's
infinitesimalization principle~{\cite{Lie-2015}}, the group $\Hol(M)$
of local biholomorphic transformations of $\C^{n+1}$ stabilizing $M$
is better viewed as the {\em real} Lie algebra:
\begin{align} \label{E:holM}
\hol(M)
\,:=\,
\Big\{
X
=
{\textstyle{\sum_{k=1}^{n+1}}}\,
a_k(z)\,
\tfrac{\partial}{\partial z_k}
\colon\,\,
\big(
X+\overline{X}
\big)\big\vert_M\,\,
\text{is tangent to}\,\,
M
\Big\},
\end{align}
where $z = (z_1, \dots, z_{n+1})$ are coordinates on $\C^{n+1}$, with the $a_k(z)$ being holomorphic.  As Lie did~{\cite{Lie-2015}}, we will consider {\em local} Lie transformation (pseudo-)groups, and mainly deal with their Lie algebras of vector fields.  Clearly, $M$ is (locally) homogeneous if and only if $\forall p \in M$, the evaluation map $\hol(M) \to T_p M$ sending $X \mapsto (X + \overline{X})|_p$ is surjective.  One calls a homogeneous $M$ {\sl simply-transitive} if $\dim\, M = \dim_\bbR  \hol(M)$, and {\sl multiply-transitive} if $\dim\,M < \dim_\bbR \hol(M)$.

Recall that $M^{2n+1} \subset \C^{n+1}$ is {\sl tubular} (or is a `{\sl tube}') if there is a biholomorphism $M \cong \cS^n \times i\, \R^{n+1}$, where $\cS \subset \R^{n+1}$ is a real hypersurface (its `{\sl base}').  If $\cS = \{ \cF(x_1,\ldots,x_{n+1}) = 0 \} \subset \R^{n+1}$ is a real hypersurface with $d\cF \neq 0$ on $\cS$, its {\sl associated tube} is $M_\cS = \{ \cF \big( \Re z_1, \dots, \Re z_{n+1} \big) = 0 \} \subset \bbC^{n+1}$.  A tube $M_\cS$ is Levi non-degenerate if and only if its base $\cS$ has non-degenerate Hessian, and the signatures of the Levi form and Hessian agree.  Clearly $i\, \partial_{z_1}, \dots, i\, \partial_{z_{n+1}} \in \hol(M_\cS)$. Furthermore, any real affine symmetry $\bS =  \big(A_{k\ell}\,
x_\ell + b_k\big)\, \partial_{x_k}$ (summation assumed on $1 \leq k,\ell \leq n+1$) of $\cS$ has `complexification' $X = \bS^{\CR} = \big(A_{k\ell}\, z_\ell + b_k\big)\, \partial_{z_k}$ in $\hol(M_\cS)$.  Thus, an affinely homogeneous base yields a holomorphically homogeneous tube. 

 \subsection{Main result}
 \label{S:main-result}
 Restrict now considerations to Levi non-degenerate hypersurfaces $M^5 \subset \C^3$, i.e.\ $n=2$.  The multiply-transitive case was tackled in~{\cite{Loboda-2001, Loboda-2003}}, which completed the majority of the classification, except the Levi-indefinite branch with $\dim\, \hol(M) = 6$.  Recently, the entire multiply-transitive classification was settled in~{\cite{CRhom}}.  The simply-transitive case was addressed in~\cite{KL2019,AtL2019,AkL2020,Loboda-2020}, where they employed normal form methods and Mubarakzyanov's classification of 5-dimensional Lie algebras. In this article, we independently settle the entire simply-transitive classification using a novel Lie algebraic approach that does not depend on earlier classifications of abstract Lie algebras.  Our main classification result is\footnote{We use the notation $z_j = x_j + i y_j$ and $w = u+iv$.}:
 
\begin{theorem}
\label{thm-main}
Any simply-transitive Levi non-degenerate hypersurface $M^5 \subset \C^3$ is locally biholomorphic to precisely one of the following.

\smallskip\noindent\textbf{(1)}\,
Either one hypersurface among the $6$ families
of tubular hypersurfaces listed in Table~{\ref{F:tubular-final}}
below, with corresponding $5$ generators of $\hol(M)$.

\smallskip\noindent\textbf{(2)}\,
Or the single {\em non}tubular exceptional model:
\begin{align} \label{E:dmt-saff2}
 \Im(w) \,=\, \big| \Im(z_2) - w\,\Im(z_1) \big|^2,
\end{align}
having indefinite Levi signature and the infinitesimal symmetries:
\begin{align}
z_1\,\partial_{z_1}
-
z_2\,\partial_{z_2}
-
2w\,\partial_w,
\ \ \ \ \
z_1\,\partial_{z_2}
+
\partial_w,
\ \ \ \ \
z_2\,\partial_{z_1}
-
w^2\,\partial_w,
\ \ \ \ \
\partial_{z_1},
\ \ \ \ \
\partial_{z_2},
\end{align}
with Lie algebra structure $\mathfrak{saff} (2,\R) := \mathfrak{sl} (2, \R) \ltimes \R^2$, i.e.\ the planar equi-affine Lie algebra.
\end{theorem}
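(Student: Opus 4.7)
Fix a base point $p_0 \in M$ and set $\mathfrak{g} := \hol(M)$.  Simple transitivity yields an isomorphism $\mathfrak{g} \cong T_{p_0}M$ via $X \mapsto (X+\overline{X})|_{p_0}$, so the CR structure on $M$ is completely encoded by the complex $(0,1)$-subspace $\mathfrak{v} \subset \mathfrak{g}^{\bbC}$ at $p_0$, which is a complex $2$-plane satisfying: (i)~$\mathfrak{v} \cap \overline{\mathfrak{v}} = 0$, (ii)~$[\mathfrak{v}, \mathfrak{v}] \subset \mathfrak{v}$ (formal CR integrability), and (iii)~the Hermitian form $\mathfrak{v} \otimes \overline{\mathfrak{v}} \to \mathfrak{g}^{\bbC}/(\mathfrak{v} \oplus \overline{\mathfrak{v}})$ induced by the bracket is non-degenerate.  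Classifying $M$ up to local biholomorphism then reduces to classifying such pairs $(\mathfrak{g}, \mathfrak{v})$ up to the natural $\mathrm{Aut}(\mathfrak{g})$-action; the Levi signature is read off from~(iii), while the coordinate-free quartic tensor announced in the abstract specialises to a polynomial invariant on $\mathfrak{v}$ computable purely from the structure constants of $\mathfrak{g}$ and the inclusion $\mathfrak{v} \hookrightarrow \mathfrak{g}^{\bbC}$.

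The plan is to split the analysis according to the structure of $\mathfrak{g}$.  First, a tubular realisation $M \cong \cS \times i\,\bbR^3$ forces $\mathfrak{g}$ to contain a $3$-dimensional abelian ideal $\mathfrak{a}$, given by the imaginary translations, such that the quotient $\mathfrak{s} := \mathfrak{g}/\mathfrak{a}$ acts on $\cS \subset \bbR^3$ as a simply-transitive real affine symmetry group, and such that the Levi form of $M$ corresponds to the (non-degenerate) second fundamental form of $\cS$.  Cross-referencing the $5$-dimensional real Lie algebras that admit such a $3$-dimensional abelian ideal with the known classification of simply-transitive affinely-homogeneous non-degenerate surfaces in $\bbR^3$ then delivers the six families of Table~\ref{F:tubular-final}, together with explicit generators of $\hol(M)$.

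The hard part will be the non-tubular case, and this is precisely where the Lie-algebraic quartic tensor becomes indispensable.  For each $5$-dimensional real Lie algebra $\mathfrak{g}$ (using the classical classification), I enumerate all complex $2$-planes $\mathfrak{v} \subset \mathfrak{g}^{\bbC}$ satisfying~(i)--(iii) modulo $\mathrm{Aut}(\mathfrak{g})$; compute the Lie-algebraic quartic tensor and the Levi form on each candidate; and discard those that either already admit a tubular realisation (handled in the previous step) or that fail further necessary CR-integrability obstructions.  The expectation, supported by the rigidity of these obstructions when combined with Levi non-degeneracy, is that this elimination kills every candidate except $\mathfrak{saff}(2,\bbR) = \mathfrak{sl}(2,\bbR) \ltimes \bbR^2$, with the surviving pair carrying the split Levi signature asserted in the statement.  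Executing this elimination systematically is the combinatorial crux of the whole proof.

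Finally, it remains to exhibit the surviving abstract pair as a concrete embedded hypersurface.  Realise the brackets of $\mathfrak{saff}(2,\bbR)$ by the explicit holomorphic vector fields displayed in the statement, with $\partial_{z_1}, \partial_{z_2}$ generating the abelian ideal and the three fields $z_1\partial_{z_1} - z_2\partial_{z_2} - 2w\partial_w$, $z_1\partial_{z_2} + \partial_w$, $z_2\partial_{z_1} - w^2\partial_w$ spanning a copy of $\mathfrak{sl}(2,\bbR)$ that acts on the ideal in the standard way.  Imposing tangency of the real parts of all five fields to a local real hypersurface through the origin, and integrating the resulting first-order PDE system, singles out precisely the equation~\eqref{E:dmt-saff2}, completing the classification.
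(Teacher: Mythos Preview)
Your outline has the right overall shape but contains two genuine gaps and one significant inefficiency compared with the paper's argument.

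\textbf{Gap 1: simple transitivity of the tubular candidates is not verified.} Listing the six families from the DKR affine classification only shows $\dim\hol(M)\ge 5$; it does not rule out $\dim\hol(M)\ge 6$. The paper handles this by computing the fundamental quartic $\mathcal{Q}_4$ for each family (Table~\ref{F:tubular-LC-Q4}) and invoking the root-type bounds \eqref{E:RootTypeSym}: types $\mathsf{I}$ and $\mathsf{II}$ force symmetry dimension $\le 5$ immediately, but the type $\mathsf{D}$ and $\mathsf{N}$ cases (e.g.\ all of $\mathsf{T}4$, and isolated parameter values in $\mathsf{T}1,\mathsf{T}2,\mathsf{T}3,\mathsf{T}5$) require a separate direct calculation of the full symmetry algebra (\S\ref{S:PDE-method}--\S\ref{S:ps-method}). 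Your ``cross-referencing\ldots delivers the six families'' skips this entirely.

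\textbf{Gap 2: the tubular/non-tubular dichotomy is not the one you actually use.} You correctly note that a tubular realisation produces a $3$-dimensional abelian ideal $\mathfrak{a}\subset\mathfrak{g}$, but you never prove the converse needed for your case split: that if $\mathfrak{g}$ \emph{contains} such an ideal and $M$ is simply-transitive, then $M$ is tubular. This is the content of the paper's Theorem~\ref{T:NTab} and Corollary~\ref{C:tubes}, and it is not automatic --- one must show the ideal is stable under the anti-involution and transverse to both $\mathfrak{e}$ and $\mathfrak{f}$, which the paper does by exhibiting explicit symmetry augmentations (raising $\dim\hol$ above $5$) whenever either condition fails. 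Without this, your non-tubular sweep over the Mubarakzyanov list would have to re-examine every algebra containing a $3$-dimensional abelian ideal, a much larger task.

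\textbf{Inefficiency.} The paper avoids Mubarakzyanov altogether: once Theorem~\ref{T:NTab} reduces the non-tubular case to algebras \emph{without} a $3$-dimensional abelian ideal, Proposition~\ref{P:NoAb} shows there are only four such complex algebras (NS1--NS3, SOL), and a short case analysis leaves only $\mathfrak{saff}(2,\mathbb{C})$ with a unique ASD-ILC triple, automatically of root type $\mathsf{I}$. Your proposed exhaustive search over all real $5$-dimensional algebras would eventually reach the same conclusion but at far greater cost.
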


\begin{footnotesize}
\begin{table}[H]
\[
\begin{array}{|c|@{\,}c@{\,}|@{\,}l@{\,}|c|} 
\hline
\mbox{Label} 
&
\begin{tabular}{c}
Affinely simply-transitive\\ 
non-degenerate real surface $\cF(x_1,x_2,u) = 0$
\end{tabular}
& 
\begin{tabular}{c} 
\rule[0pt]{0pt}{11pt}
Holomorphic symmetries of\\
$\cF(\Re(z_1),\Re(z_2),\Re(w)) = 0$\\ 
beyond $i\partial_{z_1},i\partial_{z_2},i\partial_w$ 
\rule[-5pt]{0pt}{11pt}
\end{tabular} 
& 
\begin{tabular}{c}
Levi-definite\\ 
condition\\
\end{tabular}
\\
\hline\hline
\mathsf{T}1 
& 
\begin{array}{c} 
\rule[0pt]{0pt}{10pt}
u = x_1^\alpha x_2^\beta\\\\
\mbox{\scriptsize Non-degeneracy: } 
\alpha \beta (1-\alpha-\beta) \neq 0\\
\mbox{\scriptsize Restriction: } 
(\alpha,\beta) \neq (1,1), (-1,1), (1,-1)\\
\mbox{\scriptsize Redundancy: } 
(\alpha,\beta) \sim (\beta,\alpha) \sim 
\left(\frac{1}{\alpha},-\frac{\beta}{\alpha}\right)
\rule[-6pt]{0pt}{11pt}
\end{array} 
&
\begin{array}{l}
z_1 \partial_{z_1} + \alpha w \partial_w,\\
z_2 \partial_{z_2} + \beta w \partial_w
\end{array} 
& 
\alpha\beta(1-\alpha-\beta) > 0
\\ 
\hline
\mathsf{T}2 
& 
\begin{array}{c} 
\rule[0pt]{0pt}{11pt}
u = \left(x_1^2+x_2^2\right)^\alpha 
\exp\big(\beta \arctan(\frac{x_2}{x_1})\big)\\\\
\mbox{\scriptsize Non-degeneracy: } 
\alpha \neq \frac{1}{2} \,\,\&\,\, (\alpha,\beta)\neq (0,0)\\
\mbox{\scriptsize Restriction: } (\alpha,\beta) \neq (1,0)\\
\mbox{\scriptsize Redundancy: } (\alpha,\beta) \sim (\alpha,-\beta)
\rule[-5pt]{0pt}{11pt}
\end{array} 
& 
\begin{array}{l}
z_1 \partial_{z_1} + z_2 \partial_{z_2} + 2 \alpha w \partial_w,\\
z_2 \partial_{z_1} - z_1 \partial_{z_2} - \beta w \partial_w
\end{array} 
& 
\alpha > \frac{1}{2} 
\\ 
\hline
\mathsf{T}3
& 
\begin{array}{c} 
\rule[0pt]{0pt}{11pt}
u = x_1\left(\alpha\ln(x_1) + \ln(x_2)\right)\\\\
\mbox{\scriptsize Non-degeneracy: } \alpha\neq -1
\rule[-4pt]{0pt}{11pt}
\end{array} 
& 
\begin{array}{l}
z_1 \partial_{z_1} - \alpha z_2 \partial_{z_2} + w \partial_w,\\
z_2 \partial_{z_2} + z_1 \partial_w
\end{array} 
& 
\alpha < -1 
\\ 
\hline
\mathsf{T}4 
& 
\begin{array}{c}
\rule[0pt]{0pt}{12pt}
\big(u-x_1 x_2+\frac{x_1^3}{3}\big)^2 
= 
\alpha\big(x_2-\frac{x_1^2}{2}\big)^3\\\\
\mbox{\scriptsize Non-degeneracy: } \alpha\neq -\frac{8}{9}\\
\mbox{\scriptsize Restriction: } \alpha\neq 0
\rule[-4pt]{0pt}{11pt}
\end{array} 
&
\begin{array}{l}
z_1 \partial_{z_1} + 2 z_2 \partial_{z_2} + 3 w \partial_w,\\
\partial_{z_1} + z_1 \partial_{z_2} + z_2 \partial_w
\end{array} 
& 
\alpha < -\frac{8}{9} 
\\ 
\hline
\mathsf{T}5 
&
\begin{array}{c}
\rule[0pt]{0pt}{11pt}
x_1 u = x_2^2 + \epsilon x_1^\alpha\\\\
\mbox{\scriptsize Non-degeneracy: } \alpha\neq 1,2\\
\mbox{\scriptsize Restriction: } \alpha\neq 0
\rule[-4pt]{0pt}{11pt}
\end{array} 
& 
\begin{array}{l}
z_1 \partial_{z_1} + \frac{\alpha}{2} z_2 \partial_{z_2} 
+ (\alpha-1) w\partial_w,\\
z_1 \partial_{z_2} + 2 z_2 \partial_w
\end{array}
& 
\epsilon (\alpha-1)(\alpha-2) > 0 
\\
\hline
\mathsf{T}6 
& 
\begin{array}{c} 
x_1 u = x_2^2 + \epsilon x_1^2\ln(x_1)\\
\end{array} 
&
\begin{array}{l}
\rule[0pt]{0pt}{10pt}
z_1 \partial_{z_1} + z_2 \partial_{z_2} + (\epsilon z_1+w) \partial_w,\\
z_1 \partial_{z_2} + 2 z_2 \partial_w
\rule[-5pt]{0pt}{11pt}
\end{array} 
& 
\epsilon = +1 
\\
\hline
\end{array}
\]\vspace{-0.5cm}
\caption{\small
All simply-transitive {\em tubes} 
$M^5 \subset \mathbb{C}^3$. 
Parameters $\alpha, \beta \in \mathbb{R}$ 
and $\epsilon = \pm 1$.}
\label{F:tubular-final}
\end{table}
\end{footnotesize}
\vspace{-0.2cm}

We immediately recover that all simply-transitive Levi-definite $M^5 \subset \C^3$ are tubular~{\cite{KL2019}}.

The classification of {\em affinely homogeneous} surfaces $\cS \subset \R^3$ appears in~{\cite{AffHom,EE1999}}.  A tube $M_\cS$ on an affinely multiply-transitive base $\cS$ is holomorphically multiply-transitive, so for the Levi non-degenerate simply-transitive tube classification, we can start from the DKR list \cite{AffHom} and perform the following:\footnote{Family (6) in \cite[Thm.1]{AffHom} contains a typo: it should also include $\alpha=0$, i.e.\ the Cayley surface.}
\begin{enumerate}[(i)]
 \item Remove those surfaces yielding tubes already appearing in the multiply-transitive classification \cite{CRhom}.  (See our Table \ref{F:MT-CR} and Remark \ref{R:lim-case} in \S\ref{S:tubes}.)
 \item Restrict to affinely simply-transitive surfaces that have non-degenerate Hessians.  (This excludes all quadrics, cylinders, and the {\sl Cayley surface} $u = x_1 x_2 - \frac{x_1^3}{3}$, cf. \cite[Prop. in \S3]{AffHom}.)
 \end{enumerate}
 The desired classification is a subset of the resulting {\em candidate} list, which comprises the surfaces in the 2nd column of Table \ref{F:tubular-final}.  The symmetries in the 3rd column confirm that these all have $\dim\,\hol(M) \geq 5$, but it is important to carefully identify all exceptions for which this dimension jumps up.  Theorem \ref{thm-main} asserts that no such exceptions occur among the candidate list.

A comparison with the simply-transitive list in \cite[Table 7, p.~50]{Loboda-2020} is in order.  The tubular classification there mostly matches ours, but differs in the $\sfT3$ and $\sfT4$ cases in our Table \ref{F:tubular-final}.  For the former, $\alpha=0$ is incorrectly omitted; for the latter, the restriction should be corrected to $\alpha \neq 0,-\frac{8}{9}$.  Moreover, {\em two} nontubular models are listed:
 \begin{enumerate}[(a)]
 \item $(v - x_2 y_1)^2 + y_1^2 y_2^2 = y_1$, which is equivalent to \eqref{E:dmt-saff2} -- see \S \ref{S:Equiv-Loboda}.  We moreover derive \eqref{E:dmt-saff2} in an elementary manner and elucidate some related planar equi-affine geometry.
 \item $v(1+\epsilon x_2 y_2) = y_1 y_2$ with $\epsilon = \pm 1$, which is Levi-degenerate at the origin and Levi-indefinite.  We confirm that $\dim\,\hol(M) = 5$, with generators
\begin{align} \label{E:NT-syms}
\big(2\,i+\epsilon\,z_2^2\big)\,
\partial_{z_1}
+
2\,z_2\,\partial_w,
\ \ \ \ \ 
\epsilon\,w\,\partial_{z_1}
+
\partial_{z_2},
\ \ \ \ \ 
z_1\,\partial_{z_1}
+
w\,\partial_w,
\ \ \ \ \
\partial_{z_1},
\ \ \ \ \ 
 \partial_w.
\end{align}
From the hypersurface equation, $y_2 = \Im(z_2)$ is locally unrestricted, but its level sets are clearly preserved by all symmetries \eqref{E:NT-syms}, so this model is {\em not} homogeneous.
 \end{enumerate}
 
More broadly, Theorem \ref{thm-main} also terminates the  problem of classifying all holomorphically homogeneous CR real hypersurfaces $M^5 \subset \bbC^3$, as follows:
 \begin{enumerate}
 \item {\sl holomorphically degenerate}\footnote{When there exists a nonzero holomorphic vector field $X$ (not only $2\, \Re  X$) that is tangent to $M^{2n+1} \subset \C^{n+1}$, one says that $M$ is {\sl holomorphically degenerate}~{\cite{Merker-Porten-2006,
Merker-2008}}.  After rectifying so that $X =
\partial_{z_{n+1}}$ locally near any $p \in M$ at which
$X\big\vert_p \neq 0$, one locally has $M^{2n+1} \cong \cM^{2n-1}
\times \C$ for some real hypersurface
$\cM^{2n-1} \subset \C^n$. In this case, given any holomorphic function $f(z)$, we have $f(z) \partial_{z_{n+1}} \in \hol(M)$, whence $\dim\, \hol(M) = \infty$.
 }: either the {\sl Levi-flat hyperplane} $\R \times \C \times \C$, or $\cM^3 \times \C$ for some  homogeneous Levi non-degenerate hypersurface $\cM^3 \subset \C^2$, classified by Cartan ~{\cite{Cartan-1932-I, Cartan-1932-II}}. These all have $\dim\,\hol(M) = \infty$.
 \item {\sl holomorphically non-degenerate}: From~{\cite{Merker-Porten-2006}}, there are two possibilities:
 \begin{enumerate}
 \item {\sl constant Levi rank 1 and 2-nondegenerate}: 
 The classification was completed by Fels--Kaup in~{\cite{Fels-Kaup-2008}}.  All such models are tubular, with $\dim\,\hol(M) \leq 10$, which is sharp on the tube with base the future light cone $\cS = \{ x \in \R^3 : x_1^2 + x_2^2 = x_3^2,\, x_3 > 0 \}$.
 \item {\sl Levi non-degenerate}: $\dim\,\hol(M) \leq 15$, which is sharp on the {\sl flat} model $\Im\, w = |z_1|^2 + \epsilon|z_2|^2$, where $\epsilon = \pm 1$.  The biholomorphism $(z_1,z_2,w) \mapsto (z_1,z_2, i(2w - z_1^2 - \epsilon z_2^2))$ maps this to the tube over $u = x_1^2 + \epsilon x_2^2$.
 \end{enumerate}
 \end{enumerate}

 \subsection{Classification approach and further results}
 \label{S:approach}
 Some recent classification approaches focus on effective use of {\em normal forms}.  For instance, in the {\em simply-transitive, Levi-definite} case \cite{KL2019}, the authors realize 5-dimensional real Lie algebras acting transitively on real hypersurfaces by holomorphic vector fields and then find appropriate normal forms for such realizations.  Their starting point is the classification of abstract 5-dimensional {\em real} Lie algebras (Mubarakzyanov \cite{mub1966}), but they also use an important discarding sieve: If $\hol(M)$ is 5-dimensional and contains a 3-dimensional abelian ideal, then $M$ is tubular over an affinely homogeneous base \cite[Prop.3.1]{KL2019}.     In the end, no nontubular models survive and they invoke the DKR classification~{\cite{AffHom}} for tubular cases.

 \begin{remark} By our Theorem \ref{thm-main}, we can {\em a posteriori} assert that \cite[Prop.3.1]{KL2019}, valid for a Lie algebra $\fg$ of holomorphic vector fields acting locally simply transitively on Levi-definite $M^5 \subset \bbC^3$, also holds in the Levi-indefinite case.  However, their proof does not carry over: it relies on \cite[Prop.2.3]{KL2019}, which states that if $X,Y,Z \in \fg$ commute and are linearly independent over $\bbR$ at $q \in M$, then $X,Y,Z$ are linearly independent over $\bbC$ at $q$.  This may fail in the indefinite setting, as the following counterexample shows.  Consider a hypersurface of {\sl Winkelmann type} \cite{CRhom}   given by $\Im(w + \overline{z}_1 z_2) = (z_1)^\alpha (\overline{z}_1)^{\overline\alpha}$ for $\alpha \in \C \backslash \{ -1,0,1,2 \}$, which is tubular if and only if $\frac{(2\alpha-1)^2}{(\alpha+1)(\alpha-2)} \in \R$.  Then $\hol(M)$ contains the abelian subalgebra
 \begin{align}
 X_1 = z_1\partial_{z_2}, \quad X_2 = \partial_{z_2} + z_1 \partial_w, \quad X_3 = i\partial_{z_2} - iz_1 \partial_w, \quad X_4 = \partial_w.
 \end{align}
 Evaluating at a point where $z_1 \neq 0$, we see that $\{ X_1, X_2, X_3 \}$ are linearly independent over $\R$, but they are linearly dependent over $\C$.
 \end{remark}
  
 Our approach to the {\em non-tubular}, simply-transitive classification is substantially different.  Our approach circumvents the use of normal forms, is independent of the Mubarakzyanov classification, and draws upon the known close geometric relationship with so-called {\sl Legendrian contact structures} that was similarly effectively used in \cite{ILC, CRhom}.  (The Cartan-geometric approach \cite{ILC} in the simply-transitive setting would result in heavy case-branching, so this will not be used.)  To describe our strategy, we need to recall some notions.  
  
 Any Levi non-degenerate hypersurface $M^{2n+1} \subset \C^n$ naturally inherits a CR structure of codimension 1, i.e.\ a contact distribution $C = TM \cap J(TM) \subset TM$ with a complex structure $J : C \to C$ compatible with the natural (conformal) symplectic form on $C$.  The induced $J$ on the complexification $C^\C$ has $\pm i$ eigenspaces yielding isotropic, integrable subdistributions.  Abstract CR structures $(M;C,J)$ (for which integrability is not required) have corresponding complexified analogues called {\sl Legendrian contact} (LC) structures $(N;E,F)$.  This consists of a {\em complex} contact manifold $(N^{2n+1},C)$ with the contact distribution $C$ split (instead of $C^\C$) into a pair of isotropic subdistributions $E$ and $F$ of equal dimension.  It is an {\sl integrable} ({\sl ILC}) structure if both $E$ and $F$ are integrable.
 
 Concretely, if $M^{2n+1} \subset \C^{n+1}$ has defining equation $\Phi(z,\overline{z}) = 0$, where $\Phi$ is real analytic, then we define its {\sl complexification} $M^c \subset \C^{n+1} \times \C^{n+1}$ by $\Phi(z,a) = 0$.  (We can recover $M$ as the fixed-point set of the anti-involution $(z,a) \mapsto (\overline{a},\overline{z})$ restricted to $M^c$.)  The associated double fibration
 \begin{align} \label{E:2-fib} \raisebox{0.25in}{
 \xymatrix{ &M^c \ar@{->}[ld]_{\pi_1} \ar@{->}[rd]^{\pi_2}\\
 \C^{n+1} & & \C^{n+1}}}
 \end{align}
 defined by $\pi_1(z,a) = z$ and $\pi_2(z,a) = a$ for $(z,a) \in M^c$, induces vertical (hence integrable) subdistributions $F = \ker(d\pi_1)$ and $E = \ker(d\pi_2)$ on $M^c$.   Levi non-degeneracy of $M$ implies that $C = E \op F$ is a contact distribution on $M^c$, and indeed $(M^c;E,F)$ is an ILC structure. Regarding $a \in \C^{n+1}$ as parameters, we view $M^c = \{ \Phi(z,a) = 0 \}$ as describing a parametrized family of hypersurfaces in $\C^{n+1}$.  These {\sl Segre varieties} were introduced by Segre \cite{Segre1931a, Segre1931b}, further explored by Cartan \cite{Cartan-1932-I} in the $\C^2$ case, and extended more generally -- see for example  \cite{Suk2001, Suk2003, Merker-2008, Merker-Porten-2006, CRhom}.
 
 Locally solving $\Phi(z,a) = 0$ for one variable among $z = (z_1,\ldots,z_{n+1})$, say $w := z_{n+1}$, then differentiating once, we can locally resolve all parameters $a$ in terms of the 1-jet $(z_k, w, w_\ell:= \frac{\partial w}{\partial z_\ell})$ for $1 \leq k,\ell \leq n$.  Hence, we can differentiate one more time, eliminate parameters $a$, and write second partials as a complete 2nd order PDE system (considered up to local {\sl point} transformations):
 \begin{align} \label{E:2-PDE}
 \frac{\partial^2 w}{\partial z_i \partial z_j} = f_{ij}(z_k,w,w_\ell).
 \end{align}
 The Segre varieties are now interpreted as the {\sl space of solutions} of \eqref{E:2-PDE}. (See \eqref{E:PDE-EF} for $E$ and $F$.)
 
 The symmetry algebra of an LC structure consists of all vector fields respectively preserving $E$ and $F$ under the Lie derivative.  In terms of $M^c = \{ \Phi(z,a) = 0 \}$, any symmetry is of the form $X = \xi^k(z) \partial_{z_k} + \sigma^k(a) \partial_{a_k}$.  For example, given a tube $M_\cS = \{ \cF(\Re z) = 0 \}$, its complexification $M_\cS^c = \{ \cF(\frac{z+a}{2}) = 0 \}$ admits the $(n+1)$-dimensional abelian subalgebra $\fa = \langle \partial_{z_1} - \partial_{a_1},\ldots,\partial_{z_{n+1}} - \partial_{a_{n+1}} \rangle$ that is clearly transverse to $E$ and $F$.  In the PDE picture, any  symmetry of \eqref{E:2-PDE} is projectable over the $(z_k,w)$-space, and these are called {\sl point} symmetries.  For Levi non-degenerate $M \subset \C^{n+1}$, the symmetry algebra $\sym(M^c)$ of the associated ILC structure $(M^c;E,F)$ is simply $\hol(M) \otimes_\R \C$, see \cite[Cor. 6.36]{Merker-2008}.  In particular, 
 \begin{align} \label{E:sym-dim}
 \dim_\bbC \sym(M^c) = \dim_\R \hol(M).
 \end{align}
 For our simply-transitive study, $M$ or $M^c$ will be (locally) real or complex Lie groups respectively, and we encode data on their Lie algebras.  Our focus will be on {\sl ASD-ILC triples}:
  
 \begin{defn} \label{D:ILC-triple} Let $\fg$ be a 5-dimensional complex Lie algebra.  An {\sl ILC triple} $(\fg;\fe,\ff) $ consists of a pair of 2-dimensional subalgebras $\fe, \ff$ of $\fg$ with $\fe \cap \ff = 0$ such that for $C := \fe \op \ff$, the map $\eta : \bigwedge^2 C \to \fg / C$ given by $(x,y) \mapsto [x,y] \mod C$ is non-degenerate.  An ILC triple is:
 \begin{enumerate}[(a)]
 \item {\sl tubular} if there exists a 3-dimensional subalgebra $\fa \subset \fg$ with $\fe \cap \fa = \ff \cap \fa = 0$;
 \item {\sl anti-self-dual (ASD)} if there exists an anti-involution $\tau$ of $\fg$ that swaps $\fe$ and $\ff$. In this case, call $\tau$ {\sl admissible}.  In the tubular case, $\tau$ is also required to stabilize $\fa$ above.
 \end{enumerate} 
 \end{defn}

 Given an ASD-ILC triple $(\fg;\fe,\ff)$, the fixed-point set of an admissible anti-involution $\tau$ determines the corresponding Lie algebraic CR data (and conversely). Letting $G$ be a (complex) Lie group with Lie algebra $\fg$, and $E,F$ determined from $\fe,\ff$ by left translations in $G$, the ILC structure $(G;E,F)$ certainly has ILC symmetry dimension, denoted $\dim \ILCsym(\fg;\fe,\ff)$, at least $\dim\, G = 5$.  It is important to recognize and discard cases where it exceeds this.  This occurs when there is an {\sl embedding} (Definition \ref{D:embedding}) into an ILC quadruple $(\tilde\fg,\tilde\fk;\tilde\fe,\tilde\ff)$ with $\dim(\tilde\fk) > 0$.  An important tool in this study is the {\sl fundamental quartic tensor} $\cQ_4$, which we now present.
  
 For any (integrable) CR or ILC structure, it is well-known that there is a fundamental tensor that obstructs local equivalence to the {\sl flat} model, which uniquely realizes the maximal symmetry dimension.  When $n=2$, this tensor takes the form of a {\sl binary quartic} $\cQ_4$, and symmetry upper bounds based on its root type are known -- see \eqref{E:RootTypeSym}.  In the CR setting, $\cQ_4$ is typically computed from the fourth degree part of the Chern--Moser normal form \cite{ELS1999}, while in the SILC setting \cite{ILC} it was computed in terms of a PDE realization \eqref{E:2-PDE}.  However, neither of these methods are amenable to a Lie algebraic approach.  In \S\ref{S:5ILC}, we give a coordinate-free formula for $\cQ_4$ for general LC structures, which can be directly used on Lie algebraic data -- in particular on an ASD-ILC triple $(\fg;\fe,\ff)$.
 
 Our Lie algebraic study is organized in terms of 3-dimensional abelian {\em ideals}.  In \S\ref{S:NoAb}, we efficiently classify all 5-dimensional {\em complex} Lie algebras {\em without} a 3-dimensional abelian ideal (Proposition \ref{P:NoAb}).  The search for ASD-ILC triples supported on this small list of Lie algebras produces a unique model on $\fg = \mathfrak{saff}(2,\C) := \fsl(2,\C) \ltimes \C^2$, see Theorem \ref{T:NoAb}.
 
 In \S\ref{S:NTAb}, we study ASD-ILC triples $(\fg;\fe,\ff)$ with $\fg$ containing a 3-dimensional abelian ideal $\fa$.  Theorem \ref{T:NTab} shows that if $\dim\,\ILCsym(\fg;\fe,\ff) = 5$, then $\fe \cap \fa = \ff \cap \fa = 0$ and $\fa = \tau(\fa)$ under any admissible anti-involution $\tau$.  These data allow us to a priori conclude (Corollary \ref{C:tubes}) that all models in this branch are tubes on an affinely simply-transitive base.
 
 We then return to CR geometry.  In \S\ref{S:ns21}, we construct the exceptional model \eqref{E:dmt-saff2}, highlight related planar equi-affine geometry, and find corresponding PDE realizations.  Finally in \S\ref{S:tubular}, we treat the tubes for any candidate base arising from the DKR classification.  Table \ref{F:tubular-LC-Q4} summarizes the root types for these tubes, which are deduced from the quartics $\cQ_4$ given in Table \ref{F:tubular-LC-Q4}.  From \eqref{E:RootTypeSym}, when the root type is $\pI{}$ or $\pII{}$, the symmetry dimension upper bound is 5, and such models are automatically simply-transitive.  The root type $\pD{}$ and $\pN{}$ cases are more subtle, and simple-transitivity in these remaining cases are confirmed using two methods: PDE point symmetries (\S \ref{S:PDE-method}) and power series (\S \ref{S:ps-method}).
 
 Beyond our main result, let us emphasize two important results obtained in this article:
 \begin{itemize}
 \item We give a simple geometric interpretation and coordinate-free formula for the fundamental quartic tensor $\cQ_4$ for general 5-dimensional LC structures.
 \item We conceptualize and give an effective method for computing symmetries of {\sl rigid} CR structures, which potentially can be generalized to a much larger class of geometric structures.
\end{itemize}

 \section{Fundamental tensor of 5-dimensional Legendrian contact structures}
 \label{S:5ILC}
 
 Motivated by the complexification $M^c \subset \C^{n+1} \times \C^{n+1}$ of a Levi non-degenerate hypersurface $M \subset \C^{n+1}$, we will exclusively study {\em complex} LC structures in this article (but one can carry out analogous constructions for {\em real} LC structures). Recall that a (complex) contact manifold $(N^{2n+1},C)$ consists of a corank one distribution $C$ with non-degenerate skew-bilinear map $\eta : \Gamma(\bigwedge^2 C) \to \Gamma(TN / C)$ given by $X \wedge Y \mapsto [X,Y] \,\,\mod C$.
 
 \begin{defn}
 A {\sl Legendrian contact (LC) structure} $(N;E,F)$ is a (complex) contact manifold $(N,C)$ equipped with a splitting $C = E \op F$ into maximally $\eta$-isotropic (Legendrian) subdistributions $E$ and $F$.
 \end{defn}
 
 For an LC structure, $[\Gamma(E),\Gamma(E)] \subset \Gamma(C)$ and $[\Gamma(F), \Gamma(F)] \subset \Gamma(C)$, so composition with the respective projections provided by the splitting gives two basic structure tensors $\tau_E : \Gamma(\bigwedge^2 E) \to \Gamma(F)$ and $\tau_F : \Gamma(\bigwedge^2 F) \to \Gamma(E)$.  These obstruct the Frobenius-integrability of $E$ and $F$ respectively.  If one of these vanishes, then it is {\sl semi-integrable} ({\sl SILC}), while if both do, then it is {\sl integrable} ({\sl ILC}).  In the SILC case \cite{ILC} with $\tau_F \equiv 0$, there exist local coordinates $(z^k,w,w_k)$ on $N$ such that 
 \begin{align} \label{E:PDE-EF}
 E = \langle \partial_{z^i} + w_i \partial_w + f_{ij} \partial_{w_j} \rangle, \quad F = \langle \partial_{w_i} \rangle,
 \end{align}
 where $f_{ij} = f_{ji}$ are functions of $(z^k,w,w_k)$ and $1 \leq i,j,k \leq n$.  The SILC structure is equivalently encoded by the complete 2nd order PDE system \eqref{E:2-PDE} considered up to local {\em point} transformations, i.e.\ prolongations of transformations of $(z_k,w)$-space.  Compatibility of \eqref{E:2-PDE} is equivalent to $\tau_E \equiv 0$.
 
 Beyond $\tau_E$ and $\tau_F$, there is one additional fundamental tensor $\cW$ that obstructs local equivalence to the {\em flat model} $w_{ij} = 0$.  This curvature was computed for arbitrary $n \geq 2$ in the SILC case \cite[Thm.2.9]{ILC}: with respect to an adapting framing, $\cW$ has components $\cW^{k\ell}_{ij} = \operatorname{trfr}\left( \frac{\partial^2 f_{ij}}{\partial w_k \partial w_\ell} \right)$, symmetric in the upper and lower indices respectively, and where $\trfr$ indicates the completely trace-free part.  When $n=2$, this specializes to a binary quartic tensor field.
 We now revisit the $n=2$ case and derive a coordinate-free formula for $\cW$ for general LC structures.
 
 \subsection{Canonical lifting of a 5-dimensional LC structure}
 \label{S:lift}
 Over $(N^5,C)$, define the $\bbP^1$-bundle $\widetilde{N} \stackrel{\pi}{\to} N$ with fibre over $x \in N$ defined as
 \begin{align}
 \widetilde{N}_x := \{ (\ell_E,\ell_F) \in \bbP(E_x) \times \bbP(F_x) : \eta(\ell_E,\ell_F) = 0 \}
 \end{align}
 Since $\rnk(E) = \rnk(F) = 2$ and $\eta$ restricts to a perfect pairing $E \otimes F \to TN/C$, then $\ell_E$ uniquely determines $\ell_F$, i.e.\ $\ell_F = F \cap (\ell_E)^{\perp_\eta}$, and vice-versa.  Hence, $\widetilde{N} \to N$ is indeed a $\bbP^1$-bundle. The 6-manifold $\widetilde{N}$ is canonically equipped with three distributions $V \subset D \subset \widetilde{C}$:
 \begin{enumerate}[(i)]
 \item rank 1: $V = \ker(\pi_*)$, i.e.\ the vertical distribution for $\pi$;
 \item rank 3: $D|_{\widetilde{x}} := (\pi_*)^{-1}(\ell_E \op \ell_F)$ for $\widetilde{x} = (\ell_E,\ell_F)$;
 \item rank 5: $\widetilde{C} := (\pi_*)^{-1} C$.
 \end{enumerate}
 
 Let us describe these in terms of adapted framings.  Given any $p \in N$, there is always some neighbourhood $U \subset N$ on which we can find a local framing $\{ \be_1, \be_2 , \bf_1, \bf_2 \}$ for $C = E \op F$ with $E = \langle \be_1, \be_2 \rangle$, $F = \langle \bf_1, \bf_2 \rangle$, and structure relations
 \begin{align} \label{E:str-rel}
 [\be_1,\be_2] \equiv 
 [\be_1, \bf_2] \equiv [\be_2,\bf_1] \equiv [\bf_1,\bf_2] \equiv 0, \quad [\be_1,\bf_1] \equiv [\be_2,\bf_2] \not\equiv 0 \quad \mod C.
 \end{align}
 We refer to this as an {\sl LC-adapted framing}. Any such framing induces a local trivialization $\phi\colon \pi^{-1}(U) \to U \times \bbP^1$ of $\widetilde{N} \to N$ via 
 \begin{align}
 \widetilde{x} = (\ell_E|_x,\ell_F|_x) \quad\mapsto\quad (x,[s:t]),
 \end{align}
 where $[s:t]$ are homogeneous coordinates on $\bbP^1$, and 
 \begin{align}
 \ell_E = \langle s\be_1 + t \be_2 \rangle, \quad 
 \ell_F = \langle t\bf_1 - s \bf_2 \rangle.
 \end{align}
 The vector fields $\be_1,\be_2,\bf_1,\bf_2 \in \fX(U)$ naturally induce vector fields on $U \times \bbP^1$ (having trivial component on the $\bbP^1$-factor) and on $\pi^{-1}(U)$ via the trivialization, and {\em we abuse notation to denote these vector fields on $U \times \bbP^1$ or $\pi^{-1}(U)$ also by $\be_1,\be_2,\bf_1,\bf_2$}.  To be explicit, we will work in the local coordinate chart on $\bbP^1$ on which $s \neq 0$, so we may as well assume $s =1$.  Locally we have:
 \begin{align} \label{E:VDC}
 V = \langle \partial_t \rangle, \quad D = \langle \be_1 + t \be_2, t \bf_1 - \bf_2, \partial_t \rangle, \quad \widetilde{C} = \langle \be_1, \be_2, \bf_1, \bf_2, \partial_t \rangle.
 \end{align}
 Using \eqref{E:str-rel}, we confirm that $D$ has weak derived flag $D^{-1} \subset D^{-2} = \widetilde{C} \subset D^{-3} = T\widetilde{N}$ with growth $(\rnk(D^{-1}), \rnk(D^{-2}), \rnk(D^{-3})) = (3,5,6)$.  Moreover, it is straightforward to verify that $(\widetilde{N},D)$ gives an instance of:
 
 \begin{defn}
 A {\sl Borel geometry} $(R^6,D)$ consists of a 6-manifold $R$ equipped with a rank 3 distribution $D \subset TR$ with growth $(3,5,6)$ weak derived flag $D^{-1} := D \subset D^{-2} \subset D^{-3} = TR$ and whose symbol algebra $\fm(x) := D(x) \op (D^{-2}(x)/D(x)) \op (TN/D^{-2}(x))$ at every $x \in R$ is isomorphic (as graded Lie algebras) to $\fm = \fg_{-1} \op \fg_{-2} \op \fg_{-3} = \{ e_1, e_2, e_3\} \op \{ e_4, e_5 \} \op \{ e_6 \}$ satisfying the commutator relations
 \begin{align} \label{E:XXX-symbol}
 [e_1,e_2] = e_4, \quad [e_2,e_3] = e_5, \quad [e_1,e_5] = -e_6, \quad [e_3,e_4]=e_6.
 \end{align}
 \end{defn}
 
 \begin{remark}
 Consider the Borel subalgebra in $\fsl(4)$ consisting of upper triangular trace-free matrices.  There is an induced stratification on the complementary subalgebra of strictly lower triangular matrices and the bracket relations match those for $\fm$ above.  Lifting the LC structure and reinterpreting it as a Borel geometry is an instance of a general construction for parabolic geometries referred to as lifting to a ``correspondence space'' \cite{Cap2005}.  However, we will not need to use any of the broad theory developed there.
 \end{remark}
 
 For any Borel geometry, let us observe that $D$ inherits distinguished subdistributions:
 
 \begin{prop} \label{P:XXX-decomp}
 Given any Borel geometry $(R^6,D)$, we canonically have:
 \begin{enumerate}[(a)]
 \item a rank 2 subdistribution $\sqrt{D} \subset D$ satisfying $[\sqrt{D},\sqrt{D}] \equiv 0 \,\,\,\mod D$;
 \item a line field $V = \{ X \in \Gamma(D) : [X,\Gamma(D^{-2})] \subset \Gamma(D^{-2}) \}$.  This satisfies $D = V \op \sqrt{D}$.
 \item a decomposition $\sqrt{D} = L_1 \op L_2$ (unique up to ordering) into null lines for a canonical (non-degenerate) conformal symmetric bilinear form on $\sqrt{D}$.
 \end{enumerate}
 \end{prop}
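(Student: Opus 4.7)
The plan is to observe that all three constructions are of pointwise-algebraic nature. The Levi bracket on the associated graded bundle $\operatorname{gr}(TR)=D\oplus(D^{-2}/D)\oplus(TR/D^{-2})$ is tensorial and, at every point, isomorphic as a graded Lie algebra to the symbol $\fm$ from \eqref{E:XXX-symbol}. I would therefore verify uniqueness of each of $\sqrt{D}$, $V$, and the conformal form on $\fm$; smoothness of the resulting subbundles is then automatic from the smoothness of $D$, $D^{-2}$, and the Levi bracket.

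For (a), the Levi map $\bigwedge^2\fg_{-1}\to\fg_{-2}$ is surjective by \eqref{E:XXX-symbol}, so its kernel is the single line $\langle e_1\wedge e_3\rangle$. A 2-plane $W\subset\fg_{-1}$ satisfies $[W,W]=0$ iff its Pl\"ucker line $\bigwedge^2 W$ coincides with this kernel, which forces $W=\langle e_1,e_3\rangle$ as the unique solution; $\sqrt{D}$ is then globally recovered as the 2-plane subbundle of $D$ whose Pl\"ucker line equals $\ker(\bigwedge^2 D\to D^{-2}/D)$. For (b), the adjoint action $\fg_{-1}\otimes\fg_{-2}\to\fg_{-3}$ reads off \eqref{E:XXX-symbol} as $e_1\cdot e_5=-e_6$, $e_3\cdot e_4=e_6$, and zero on the remaining pairs; since it takes values in $\fg_{-3}$, which is disjoint from $\fg_{-2}$, the stated condition $[X,\Gamma(D^{-2})]\subset\Gamma(D^{-2})$ is equivalent at the symbol level to the \emph{vanishing} of this action, whose kernel is exactly $\langle e_2\rangle$. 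This yields the canonical line field $V$, and the splitting $D=V\oplus\sqrt{D}$ matches $\langle e_2\rangle\oplus\langle e_1,e_3\rangle=\fg_{-1}$.

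For (c), I would fix a local section $Z$ of $V$ and define $B_Z(X,Y):=[Y,[X,Z]]\bmod D^{-2}$ for $X,Y\in\Gamma(\sqrt{D})$. A symbol computation with $X=ae_1+ce_3$, $Y=a'e_1+c'e_3$ gives $[X,Z]=ae_4-ce_5$ and hence $B_Z(X,Y)=(ac'+a'c)\,e_6$, which is symmetric in $X\leftrightarrow Y$ and non-degenerate with matrix $\bigl(\begin{smallmatrix}0&1\\1&0\end{smallmatrix}\bigr)$ in the basis $(e_1,e_3)$. Rescaling $Z\to\lambda Z$ rescales $B_Z\to\lambda B_Z$ by tensoriality of the Levi bracket, so $B_Z$ defines a canonical conformal class on $\sqrt{D}$; over $\C$, its null cone is the union of the two lines $L_1=\langle e_1\rangle$ and $L_2=\langle e_3\rangle$, giving the claimed decomposition.

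The only nontrivial point I expect is the \emph{symmetry} of $B_Z$, which is not formal but rests on the antisymmetry $[e_1,e_3]=0$ in $\fg_{-1}$ together with the precise sign pattern of the last two relations of \eqref{E:XXX-symbol}; once this is verified, everything else is routine bookkeeping on the nilpotent symbol $\fm$.
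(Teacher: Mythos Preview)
Your argument is correct and follows essentially the same route as the paper: both parts (a) and (b) are obtained as kernels of the symbol brackets $\bigwedge^2\fg_{-1}\to\fg_{-2}$ and $\fg_{-1}\to\fg_{-2}^*\otimes\fg_{-3}$ respectively, and your bilinear form $B_Z(X,Y)=[Y,[X,Z]]\bmod D^{-2}$ is exactly the paper's composite of the isomorphism $V\otimes\sqrt{D}\cong D^{-2}/D$ with the map $\sqrt{D}\otimes(D^{-2}/D)\to TR/D^{-2}$. Your presentation is a bit more explicit (the Pl\"ucker uniqueness in (a) and the direct symmetry check in (c)), but there is no substantive difference.
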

 
 \begin{proof} \quad
 \begin{enumerate}[(a)]
 \item The bracket $\bigwedge^2 \fg_{-1} \to \fg_{-2}$ coming from $\bigwedge^2 D \to D^{-2} / D$ has 1-dimensional kernel $\langle e_1 \wedge e_3 \rangle$.  This corresponds to a (rank 2) $\sqrt{D} \subset D$ satisfying $[\sqrt{D},\sqrt{D}] \equiv 0\,\, \mod D$.

 \item The bracket gives a surjective map $\fg_{-1} \times \fg_{-2} \to \fg_{-3}$, so the induced map $\fg_{-1} \to \fg_{-2}^* \ot \fg_{-3}$ has 1-dimensional kernel $\langle e_2 \rangle$.  Thus, there exists a distinguished line field $V \subset D$ satisfying $[X,\Gamma(D^{-2})] \subset \Gamma(D^{-2})$ for any $X \in \Gamma(V)$.  From \eqref{E:XXX-symbol}, it is clear that $V \not\subset \sqrt{D}$.

\item The Lie bracket induces the isomorphism $V \ot \sqrt{D} \cong D^{-2}/D$ and a map $\sqrt{D} \ot (D^{-2}/D) \to TR/D^{-2}$.  Via the former, the latter induces a conformal symmetric bilinear form on $\sqrt{D}$.  In a framing corresponding to the basis $\{ e_1, e_3 \}$, it is a multiple of $\begin{psm} 0 & 1\\ 1 & 0 \end{psm} \mod D^{-2}$. Letting $L_1,L_2 \subset \sqrt{D}$ be complementary null line fields then establishes the claim. \qedhere
 \end{enumerate}
 \end{proof}

The decomposition $D = V \op \sqrt{D}$ provides projections onto each factor.  Consequently, the following result is immediate:

 \begin{cor} \label{C:Phi} The map $\Gamma(L_1) \times \Gamma(L_2) \to \Gamma(V)$ given by\footnote{Because of the possibility of swapping $L_1$ and $L_2$, $\Phi$ is canonical only up to a sign.}
 \begin{align} \label{E:Phi-proj}
 (X,Y) \mapsto \proj_V([X,Y])
 \end{align}
  is tensorial, so determines a vector bundle map $\Phi : L_1 \otimes L_2 \to V$.  Geometrically, it is the obstruction to Frobenius-integrability of $\sqrt{D}$. 
 \end{cor}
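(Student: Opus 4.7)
The plan is a short, standard ``tensoriality-from-the-Leibniz-rule'' argument, followed by an elementary unpacking of what Frobenius-integrability of $\sqrt{D}$ means under the canonical splitting $D = V \oplus \sqrt{D}$ provided by Proposition \ref{P:XXX-decomp}. There is no serious obstacle; the whole statement ought to fit in a few lines.

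First I would verify that the expression $\proj_V([X,Y])$ is even meaningful. Since $X \in \Gamma(L_1) \subset \Gamma(\sqrt{D})$ and $Y \in \Gamma(L_2) \subset \Gamma(\sqrt{D})$, the relation $[\sqrt{D},\sqrt{D}] \subset \Gamma(D)$ recorded in Proposition \ref{P:XXX-decomp}(a) guarantees $[X,Y] \in \Gamma(D)$, so the projection onto $V$ along $\sqrt{D}$ is well defined.

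For tensoriality, the entire content is the Leibniz identity $[fX,Y] = f[X,Y] - (Yf)X$ for $f \in C^\infty(\widetilde{N})$. Since $X \in \Gamma(\sqrt{D})$, the correction term $(Yf)X$ lies in $\Gamma(\sqrt{D})$ and is annihilated by $\proj_V$; hence $\proj_V([fX,Y]) = f\, \proj_V([X,Y])$, and the symmetric calculation in the second slot gives $C^\infty$-linearity in $Y$ as well. This promotes the map to a vector bundle homomorphism $\Phi : L_1 \otimes L_2 \to V$, as claimed.

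For the geometric interpretation, the splitting $D = V \oplus \sqrt{D}$ identifies Frobenius-integrability of $\sqrt{D}$ with the vanishing of the $V$-component of $[\Gamma(\sqrt{D}),\Gamma(\sqrt{D})]$. Brackets internal to a single line field are automatic: for $X, X' \in \Gamma(L_i)$ write $X' = gX$ locally and note $[X,X'] = (Xg)X \in \Gamma(L_i) \subset \Gamma(\sqrt{D})$, so its $V$-component vanishes. Hence the only potentially non-trivial $V$-component of $[\Gamma(\sqrt{D}), \Gamma(\sqrt{D})]$ comes from mixed brackets $[L_1, L_2]$, which is precisely $\Phi$. Consequently $\Phi \equiv 0$ is equivalent to Frobenius-integrability of $\sqrt{D}$. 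The only point worth flagging is the sign ambiguity already noted in the footnote, reflecting that $L_1$ and $L_2$ were distinguished only up to ordering in Proposition \ref{P:XXX-decomp}(c).
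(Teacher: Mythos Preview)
Your argument is correct and is exactly the standard unpacking one would give here; the paper itself provides no proof at all, merely declaring the result ``immediate'' from the decomposition $D = V \oplus \sqrt{D}$ of Proposition~\ref{P:XXX-decomp}. Your write-up supplies precisely the details (Leibniz-rule tensoriality and the observation that brackets internal to each line field $L_i$ already lie in $\sqrt{D}$) that the authors leave implicit.
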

 
 For an LC structure $(N^5;E,F)$, we refer to $\Phi$ as its {\sl fundamental tensor}.  We now show that $\Phi$ specializes to the known quartic expression in the SILC case.
 
 \subsection{The fundamental quartic tensor}
 \label{S:framing}
 We now evaluate $\Phi$ in an LC-adapted framing.
 
 \begin{lem} Let $(N^5;E,F)$ be an LC structure, $\{ \be_1, \be_2, \bf_1, \bf_2 \}$ an LC-adapted framing of $C=E \op F$ on $N$ (i.e.\ satisfying \eqref{E:str-rel}) and let $\{ \be^1, \be^2, \bf^1, \bf^2 \}$ be its dual coframing.  Following \S \ref{S:lift}, we induce vector fields on $\widetilde{N}$ satisfying \eqref{E:VDC}.
 \begin{enumerate}
 \item The line fields $V,L_1,L_2$ from Proposition \ref{P:XXX-decomp} are respectively spanned by
 \begin{align} \label{E:L1L2}
 \partial_t, \quad 
 \ell_1 = \be_1 + t \be_2 + A_1\partial_t, \quad
 \ell_2 = t \bf_1 - \bf_2 + A_2\partial_t,
 \end{align}
 where, defining $\bS:= [\be_1 + t \be_2, t \bf_1 - \bf_2]$, we have
 \begin{align} \label{E:SA1A2}
 A_1 = -(\bf^1+t\bf^2)(\bS), \quad A_2 = (\be^2 - t\be^1 )(\bS).
 \end{align}
 \item Defining \framebox{$\cQ_4 := -dt(\Phi(\ell_1,\ell_2))$} in terms of the fundamental tensor $\Phi$, we have
 \begin{equation} \label{E:Phi}
 \cQ_4 = -\ell_1(A_2) + \ell_2(A_1) - \be^1(\bS) \bf^1(\bS) - \be^2(\bS) \bf^2(\bS),
 \end{equation}
 which is a polynomial in $t$ of degree at most 4.
 \end{enumerate}
 \end{lem}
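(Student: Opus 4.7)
The plan is to work entirely in the local trivialization $\pi^{-1}(U) \cong U \times \bbP^1$ with the LC-adapted framing $\be_1,\be_2,\bf_1,\bf_2$ lifted trivially from $U$, so that \eqref{E:VDC} holds and each of $\be_i,\bf_j$ commutes with $\partial_t$. For part (1), the identification $V = \langle\partial_t\rangle$ follows immediately: commutativity gives $[\partial_t,\widetilde{C}] \subset \widetilde{C}$, placing $\partial_t \in V$, and $V$ has rank $1$. For $\sqrt{D}$, the decomposition $D = V \op \sqrt{D}$ shows that its basis has the unique form $\ell_1 = (\be_1+t\be_2) + A_1\partial_t$, $\ell_2 = (t\bf_1-\bf_2) + A_2\partial_t$. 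To pin down $A_1,A_2$, I would use the symbol characterization that $\sqrt{D} \wedge \sqrt{D}$ lies in the kernel of the bracket map $\bigwedge^2 D \to D^{-2}/D$. The key reduction is $\be_1 \equiv -t\be_2$ and $\bf_2 \equiv t\bf_1$ modulo $D$, which gives
\[
\bS \,\equiv\, \bigl(\be^2(\bS) - t\be^1(\bS)\bigr)\be_2 + \bigl(\bf^1(\bS) + t\bf^2(\bS)\bigr)\bf_1 \pmod{D},
\]
and then $[\ell_1,\ell_2] \equiv 0 \pmod{D}$ forces the stated formulas for $A_1,A_2$. Finally, $\ell_1$ and $\ell_2$ project respectively to $E$- and $F$-directions on $N$, matching the symbol basis $\{e_1,e_3\}$, which are the null directions of the canonical conformal pairing on $\sqrt{D}$ from Proposition~\ref{P:XXX-decomp}(c); hence $L_1 = \langle\ell_1\rangle$ and $L_2 = \langle\ell_2\rangle$ up to labelling.

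For part (2), I would compute $[\ell_1,\ell_2]$ via Leibniz, using $[\partial_t,\be_i]=[\partial_t,\bf_j]=0$ together with $[\partial_t, t\be_2]=\be_2$ and $[\partial_t, t\bf_1]=\bf_1$, to obtain
\[
[\ell_1,\ell_2] \,=\, \bS + A_1 \bf_1 - A_2 \be_2 + \bigl(\ell_1(A_2) - \ell_2(A_1)\bigr)\partial_t.
\]
Expanding $\bS$ in the coframe $\{\be^i,\bf^j\}$ and substituting the formulas for $A_1,A_2$, the non-$\partial_t$ piece collapses to $\be^1(\bS)(\be_1+t\be_2) - \bf^2(\bS)(t\bf_1-\bf_2)$; rewriting via $\be_1+t\be_2 = \ell_1 - A_1\partial_t$ and $t\bf_1-\bf_2 = \ell_2 - A_2\partial_t$ gives
\[
\bS + A_1\bf_1 - A_2\be_2 \,=\, \be^1(\bS)\ell_1 - \bf^2(\bS)\ell_2 + \bigl(\bf^2(\bS)A_2 - \be^1(\bS)A_1\bigr)\partial_t.
\]
Projecting onto $V$ along $\sqrt{D}$ and applying $-dt$ then yields
\[
\cQ_4 \,=\, -\ell_1(A_2) + \ell_2(A_1) + \be^1(\bS)A_1 - \bf^2(\bS)A_2,
\]
and substituting the explicit $A_i$ produces the claimed formula, with the two cross-terms $\pm t\be^1(\bS)\bf^2(\bS)$ cancelling.

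The main obstacle is the polynomial-degree bound. Since $\bS$ has $t$-degree at most $2$, the functions $A_1,A_2$ have $t$-degree at most $3$, so the products $\be^i(\bS)\bf^i(\bS)$ are automatically degree $\leq 4$; however $A_1\partial_t(A_2)$ and $A_2\partial_t(A_1)$ are a priori degree $\leq 5$. I would verify that the $t^5$ contributions cancel in the combination $-A_1\partial_t(A_2) + A_2\partial_t(A_1)$ inside $-\ell_1(A_2) + \ell_2(A_1)$: the leading $t^3$ coefficient of $A_1$ is $-\bf^2([\be_2,\bf_1])$ and that of $A_2$ is $-\be^1([\be_2,\bf_1])$, and the cross-multiplication against the corresponding $t^2$ coefficients of $\partial_t A_i$ yields equal $t^5$ contributions that cancel upon subtraction. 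Conceptually the bound reflects the $\bbP^1$-fibre geometry: via projection to the tautological subbundles of $\pi^*E$ and $\pi^*F$, $L_1$ and $L_2$ are each isomorphic to $\cO(-1)$ on every fibre, while $V \cong \cO(2)$, so $\Phi : L_1 \otimes L_2 \to V$ descends to a section of $\cO(4)$, necessarily polynomial of degree $\leq 4$ in $t$.
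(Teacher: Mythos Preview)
Your proof is correct and follows essentially the same route as the paper: you compute $[\ell_1,\ell_2]$, impose $[\ell_1,\ell_2]\equiv 0\bmod D$ to determine $A_1,A_2$, and read off the $\partial_t$-component modulo $\sqrt{D}$ to obtain $\cQ_4$, with the same leading-term cancellation for the degree bound. Your final line-bundle interpretation of the degree bound ($L_i\cong\cO(-1)$, $V\cong\cO(2)$, hence $\Phi$ a section of $\cO(4)$) is a nice conceptual addition not present in the paper's proof.
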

 
 \begin{proof}
 We already know $V = \langle \partial_t \rangle$, so write $\sqrt{D} = \langle \ell_1, \ell_2 \rangle$ with $\ell_1,\ell_2$ as in \eqref{E:L1L2}.  Write
 \begin{align} \label{E:L1L2-bracket}
 [\ell_1,\ell_2] &= \bS +  A_1 \bf_1 - A_2 \be_2 + (\ell_1(A_2) - \ell_2(A_1)) \partial_t,
 \end{align}
 where $\bS \in \Gamma(\widetilde{C})$ by \eqref{E:str-rel}.  Writing $\bS = s_1 \be_1 + s_2 \be_2 + s_3 \bf_1 + s_4 \bf_2$, we have
 \begin{equation} \label{E:L1L2-br}
 \begin{split} 
 [\ell_1,\ell_2] &\equiv (s_2 - s_1 t - A_2) \be_2 + (s_3 + s_4 t + A_1) \bf_1 \\
 &\qquad + \left(\ell_1(A_2) - \ell_2(A_1) - s_1 A_1 + s_4 A_2\right) \partial_t \quad \mod \sqrt{D}.
 \end{split}
 \end{equation}
 Using part (a) of Proposition \ref{P:XXX-decomp}, we force $[\ell_1,\ell_2] \equiv 0 \mod D$ and obtain the relations \eqref{E:SA1A2}.  This proves the first claim.
  To confirm part (c) of Proposition \ref{P:XXX-decomp}, we now compute:
 \begin{itemize}
 \item $V \otimes \sqrt{D} \cong D^{-2} / D$: Observe $[\partial_t, \ell_1] \equiv \be_2, \, [\partial_t, \ell_2] \equiv \bf_1 \mod D$.
 \item $\sqrt{D} \otimes D^{-2} / D \cong T\widetilde{N} / D^{-2}$: $\begin{pmatrix} [\ell_1, \be_2] & [\ell_1,\bf_1] \\ [\ell_2,\be_2] & [\ell_2,\bf_1] \end{pmatrix} \equiv \begin{pmatrix} 0 & [\be_1,\bf_1]\\ [\be_2,\bf_2] & 0 \end{pmatrix} \mod \widetilde{C}$.
 \end{itemize}
 Composition yields a symmetric bilinear map $\sqrt{D} \otimes \sqrt{D} \to V^* \otimes T\widetilde{N} / D^{-2}$ for which $L_i := \langle \ell_i \rangle$ are null.
 
 For the second claim use \eqref{E:L1L2-br}.  Note that $-s_1 A_1 + s_4 A_2 = \be^1(\bS) \bf^1(\bS) + \be^2(\bS) \bf^2(\bS)$, so we get \eqref{E:SA1A2}.  Since $\bS$ is quadratic in $t$, then $A_i$ are cubic in $t$ and so a priori $\cQ_4$ is quintic in $t$.  However, the order 5 term of $\cQ_4$ agrees with that of $-A_1 \partial_t A_2 + A_2 \partial_t A_1$, which is $t^3 \bf^2([\be_2,\bf_1]) (-3t^2 \be^1([\be_2,\bf_1])) - t^3  \be^1([\be_2,\bf_1]) (-3t^2 \bf^2([\be_2,\bf_1])) = 0$, so $\deg(\cQ_4) \leq 4$.
 \end{proof}

  \begin{remark} \label{R:Q4-transform}
  A local change of LC-adapted framing from $(\be_1,\be_2,\bf_1,\bf_2)$ to $(\widehat\be_1,\widehat\be_2,\widehat\bf_1,\widehat\bf_2)$ is determined by how $(\widehat\be_1,\widehat\be_2)$ differs from $(\be_1,\be_2)$, i.e.\ pointwise, by a $\GL(2)$ transformation.  This induces a fractional linear transformation $\hat{t} = \frac{at+b}{ct+d}$, from which we can verify that $\widehat\cQ_4(\,\hat{t}\,) = \frac{1}{(ct+d)^4} \cQ_4(t)$.
  \end{remark}

 Let us now specialize to an SILC structure.  Locally, this is given by the 2nd order PDE system
 \begin{align}
 w_{11} = \sfF, \quad 
 w_{12} = \sfG, \quad
 w_{22} = \sfH,
 \end{align}
 where $\sfF,\sfG,\sfH$ are functions of $(z^1,z^2,w,w_1,w_2)$.  More precisely, we have a contact 5-manifold $(N,C)$ with $C = E \op F = \langle \be_1, \be_2 \rangle \op \langle \bf_1, \bf_2 \rangle$ given by the LC-adapted framing $\{ \be_1, \be_2, \bf_1, \bf_2 \}$:
 \begin{align} \label{E:SILC-PDE}
 \begin{split}
 &\be_1 = \partial_{z^1} + w_1 \partial_w + \sfF \partial_{w_1} + \sfG \partial_{w_2}, \quad
 \bf_1 = \partial_{w_1}, \\
 &\be_2 = \partial_{z^2} + w_2 \partial_w + \sfG \partial_{w_1} + \sfH \partial_{w_2}, \quad
 \bf_2 = \partial_{w_2}.
 \end{split}
 \end{align}
 
 \begin{cor} \label{C:SILC-5} For the SILC $(N^5;E,F)$ given by \eqref{E:SILC-PDE}, we have
 \begin{align} \label{E:Q4}
 \cQ_4 = \sfF_{qq} + 2t(\sfG_{qq} - \sfF_{pq}) + t^2(\sfF_{pp} - 4 \sfG_{pq} + \sfH_{qq}) + 2t^3(\sfG_{pp} - \sfH_{pq}) + t^4 \sfH_{pp},
 \end{align}
 where $(p,q) := (w_1,w_2)$.  In the ILC case, $\cQ_4$ is the complete obstruction to local equivalence with the flat model $w_{ij} = 0$.
 \end{cor}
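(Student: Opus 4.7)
The plan is to evaluate the general coordinate-free formula \eqref{E:Phi} directly on the LC-adapted framing \eqref{E:SILC-PDE} associated to the PDE system, exploiting a structural simplification particular to the SILC setting that causes most terms to collapse.

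First I would compute the four basic brackets $[\be_i,\bf_j]$. Since each $\bf_j = \partial_{w_j}$ has constant coefficients, $[X,\partial_{w_j}] = -\partial_{w_j}(X^k)\partial_k$, so these brackets simply collect negatives of partial derivatives with respect to $p=w_1$ and $q=w_2$ of the coefficients of $\be_i$. One reads off that $[\be_1,\bf_1]$ and $[\be_2,\bf_2]$ each contain a $-\partial_w$ transverse part (plus $F$-valued terms), while $[\be_1,\bf_2]$ and $[\be_2,\bf_1]$ lie entirely in $F$, reconfirming the LC-adapted relations \eqref{E:str-rel}. Next, I would assemble
\[
\bS \;=\; t[\be_1,\bf_1]-[\be_1,\bf_2]+t^2[\be_2,\bf_1]-t[\be_2,\bf_2].
\]
The crucial observation is that the $-t\partial_w$ from $t[\be_1,\bf_1]$ and the $+t\partial_w$ from $-t[\be_2,\bf_2]$ cancel exactly, so $\bS \in \Gamma(F)$.

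This single fact forces a dramatic collapse in \eqref{E:Phi}: one has $\be^1(\bS)=\be^2(\bS)=0$ trivially, which kills the last two terms; and $A_2 = (\be^2-t\be^1)(\bS)=0$, whence $\ell_2 = t\bf_1-\bf_2$ and the term $\ell_1(A_2)$ vanishes. Thus $\cQ_4 = \ell_2(A_1)$, with
\[
A_1 = -(\bf^1+t\bf^2)(\bS) = -\sfF_q + t(\sfF_p-2\sfG_q) + t^2(2\sfG_p-\sfH_q) + t^3\sfH_p
\]
read off immediately from the $\partial_{w_1},\partial_{w_2}$-components of $\bS$. Applying $\ell_2 = t\partial_p - \partial_q$ to this cubic and collecting powers of $t$ then produces exactly the quartic in \eqref{E:Q4}. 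The degree bound $\deg\cQ_4 \leq 4$ is automatic in this situation, since $A_1$ is already cubic and the multiplier $\ell_2$ raises the degree by at most one.

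The only obstacle is careful sign and index bookkeeping; there is no conceptual difficulty beyond the cancellation observation that places $\bS$ in $F$. For the second assertion, since the coordinate-free $\cQ_4$ introduced here now provably coincides with the SILC curvature tensor computed by a different route in \cite[Thm.~2.9]{ILC} for $n=2$, its role as the complete obstruction to local equivalence with the flat model $w_{ij}=0$ in the ILC case is inherited from that source.
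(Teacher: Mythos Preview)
Your proof is correct and follows essentially the same route as the paper: compute $\bS$, observe it lies entirely in $F$ so that $A_2=0$ and the $\be^i(\bS)$ terms vanish, reduce \eqref{E:Phi} to $\cQ_4=\ell_2(A_1)$, and expand. The paper presents $\bS=s_3\bf_1+s_4\bf_2$ directly rather than via the four individual brackets, and for the flatness-obstruction claim it notes explicitly that one must homogenize and substitute $t\mapsto -t$ to match \cite[(3.3)]{ILC}, a bookkeeping detail you elide but which does not affect correctness.
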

 
 \begin{proof}
 Using \eqref{E:SILC-PDE}, we calculate $\bS = [\be_1 + t \be_2, t \bf_1 - \bf_2] =: s_3 \bf_1 + s_4 \bf_2$, where
 \begin{align}
 s_3 = \sfF_q + t (\sfG_q - \sfF_p) - t^2 \sfG_p, \quad
 s_4 = \sfG_q + t (\sfH_q - \sfG_p) - t^2 \sfH_p.
 \end{align}
 Hence, $A_1 = -s_3 - s_4 t$ and $A_2 = 0$ by \eqref{E:SA1A2}, and also $\be^1(\bS) = \be^2(\bS) = 0$.
 Then \eqref{E:Phi} yields $ \cQ_4 = \ell_2(A_1) = (\bf_2 - t \bf_1)(s_3 + s_4 t)$, which simplifies to \eqref{E:Q4} above.
 
 Homogenizing $\cQ_4$ and replacing $t \mapsto -t$, we recover the harmonic curvature expression $\cW$ derived in \cite[(3.3)]{ILC}, which is the complete local obstruction to flatness for 5-dimensional ILC structures.
 \end{proof}

  A key advantage of \eqref{E:Phi} (see next section) is that it can be easily evaluated on homogeneous structures in terms of Lie algebra data.  A PDE realization as in Corollary \ref{C:SILC-5} is not needed.

 By Remark \ref{R:Q4-transform}, the {\sl root type}\footnote{We should always view $\cQ_4$ as a {\em quartic}: e.g.\ when the coefficient of $t^4$ vanishes, we regard $\infty$ as being a root.} of $\cQ_4$ is a discrete invariant of an LC structure.  We denote this by $\sfN$ (quadruple root), $\sfD$ (two double roots), $\mathsf{III}$ (triple root), $\mathsf{II}$ (one double root \& two simple roots), $\mathsf{I}$ (four distinct roots), or $\mathsf{O}$ (identically zero).  Locally, only $w_{ij} = 0$ has constant type $\pO{}$ everywhere.

 \subsection{Symmetries and homogeneous examples}
 For an LC structure $(N;E,F)$, an {\sl automorphism [(infinitesimal) symmetry]} is a diffeomorphism [vector field] of $N$ preserving both $E$ and $F$ under pushforward [Lie derivative]. The symmetry dimension for LC structures $(N^{2n+1};E,F)$ is at most $(n+2)^2-1$ and this upper bound is (locally uniquely) realized by $\fsl(n+2)$ on the flat model $w_{ij} = 0$.    Focusing now on the 5-dimensional ILC case, $15$ is the maximal symmetry dimension, and there is a well-known symmetry gap to the next realizable symmetry dimension, which is 8.  Finer (sharp) upper bounds for structures with constant root type for $\cQ_4$ are also known (see \cite[Thm.3.1]{ILC}):
 \begin{align} \label{E:RootTypeSym}
 \begin{array}{cccccccc}
 \mbox{Root type} & \sfO & \sfN & \sfD & \mathsf{III} & \mathsf{II} & \mathsf{I}\\ \hline
 \mbox{Max. sym. dim.} & 15 & 8 & 7 & 6 & 5 & 5
 \end{array}
 \end{align}
   
Let $G$ be a Lie group and $K$ a closed subgroup.  Any $G$-invariant ILC structure on $N = G/K$ is completely encoded by the following algebraic data generalizing Definition \ref{D:ILC-triple}.
 
 \begin{defn} An {\sl ILC quadruple} $(\fg,\fk;\fe,\ff)$ consists of:
 \begin{enumerate}[(i)]
 \item $\fg$ is a Lie algebra and $\fk$ is a Lie subalgebra;
 \item $\fe$ and $\ff$ are Lie subalgebras of $\fg$ with $\fe \cap \ff = \fk$ (in particular, $[\fk,\fe] \subset \fe$ and $[\fk,\ff] \subset \ff$);
 \item $\dim(\fe/\fk) = \dim(\ff/\fk) = \half( \dim(\fg/\fk)-1)$;
 \item $C := \fe/\fk \op \ff / \fk$ is a non-degenerate subspace of $\fg / \fk$, i.e.\ the map $\eta: \bigwedge^2 C \to \fg / C$ given by $x \wedge y \mapsto [x,y] \mod C$ is non-degenerate.\footnote{Although $\fk$ is not usually an ideal in $\fg$ (so there is no well-defined bracket on $\fg/\fk$ coming from $\fg$), the map $\eta$ is well-defined by (i)--(iii).}
 \item ({\sl Effectivity}) The induced action of $\fk$ on $C$ is non-trivial.
 \end{enumerate}
 When $\fk=0$, we simply refer to $(\fg,0;\fe;\ff)$ as an {\sl ILC triple} $(\fg;\fe,\ff)$.  We will use the notation $\dim(\ILCsym(\fg;\fe,\ff))$ to denote the ILC symmetry dimension of the unique left-invariant ILC structure on any Lie group $G$ with Lie algebra $\fg$ determined by the data $(\fg;\fe,\ff)$.
 \end{defn}
 Given an ILC triple $(\fg;\fe,\ff)$ with $\dim(\fg) = 5$, let $G$ be any Lie group with Lie algebra $\fg$.  Using an LC-adapted framing $\{ \be_1, \be_2, \bf_1, \bf_2 \}$  consisting of left-invariant vector fields on $G$, we see that $A_1$ and $A_2$ are polynomials in $t$ with {\em constant} coefficients, and \eqref{E:Phi} becomes:
  \begin{equation} \label{E:Phi-ST}
 \cQ_4 = - A_1 \partial_t A_2 + A_2 \partial_t A_1 - \be^1(\bS) \bf^1(\bS) - \be^2(\bS) \bf^2(\bS),
 \end{equation}
 where
 \begin{align} \label{E:SA1A2-again}
 \bS = [\be_1 + t \be_2, t \bf_1 - \bf_2], \quad
 A_1 = -(\bf^1+t\bf^2)(\bS), \quad A_2 = (\be^2 - t\be^1 )(\bS).
 \end{align}

 We now consider some examples.  Henceforth, $\{ H, X, Y \}$ will denote a standard $\fsl(2)$-triple satisfying the commutator relations 
 \begin{align} \label{E:sl2-triple}
 [H,X] = 2X, \quad [H,Y]=-2Y, \quad [X,Y]=H.
 \end{align}
 (When appropriate, we regard these as $2\times2$ matrices: $H = \begin{psm} 1 & 0\\ 0 & -1\end{psm}, X = \begin{psm} 0 & 1\\ 0 & 0 \end{psm}, Y = \begin{psm} 0 & 0\\ 1 & 0 \end{psm}$.)

  \begin{ex} \label{X:saff} Consider $\fg = \mathfrak{saff}(2,\bbC) := \fsl(2,\bbC) \ltimes \bbC^2$ and basis $\{ H,X,Y, v_1, v_2 \}$.  Aside from the $\fsl(2)$-triple, the only other non-trivial brackets are:
 \begin{equation} \label{E:saff2-rels}
 [H,v_1] = v_1, \quad [H,v_2] = -v_2, \quad [X,v_2] = v_1, \quad [Y,v_1] = v_2.
 \end{equation}
 Define an ILC triple $(\fg;\fe,\ff)$ via
 \begin{align}
 \fe = \langle H+v_1, X \rangle, \quad \ff = \langle H-v_2, Y \rangle,
 \end{align}
 and an LC-adapted framing:
 \begin{align}
 \be_1 = X, \quad \be_2 = H + v_1 + X, \quad
 \bf_1 = 3Y, \quad \bf_2 = H - v_2 - Y.
 \end{align}
 We compute $\bS = \be_1 + (2t+1)\be_2 - t^2 \bf_1 + t(3t+2) \bf_2$,
hence $A_1 = -t^2 - 3t^3$ and $A_2 = 1 + t$, while $\cQ_4 = -4t(t+1)(3t+1)$, which has distinct roots $\{ -1, -\frac{1}{3}, 0, \infty \}$, so is of root type $\pI{}$.  From \eqref{E:RootTypeSym}, we conclude that $\dim(\ILCsym(\fg;\fe,\ff)) = 5$.
 \end{ex}
 
 If the homogeneous structure is not type $\mathsf{II}$ or $\mathsf{I}$, then the symmetry dimension may be higher than expected.  Algebraically, this amounts to exhibiting:
 
 \begin{defn} \label{D:embedding} An {\sl embedding} of an ILC triple $(\fg;\fe,\ff)$ into an ILC quadruple $(\bar\fg,\bar\fk;\bar\fe,\bar\ff)$ is a  Lie algebra monomorphism $\iota\colon \fg\to\bar\fg$, such that 
 \begin{align}
 \iota(\fg) \cap \bar\fk = 0, \quad \iota(\fe) \subset \bar\fe, \quad \iota(\ff) \subset \bar\ff.
 \end{align}
 If $\fg \subset \bar\fg$ is a subalgebra and $\iota$ is the natural inclusion, we say that $(\bar\fg,\bar\fk;\bar\fe,\bar\ff)$ is an {\sl augmentation} of $(\fg;\fe,\ff)$ by $\bar\fk$.  In particular, $\bar\fg = \fg + \bar\fk$, $\bar\fe = \fe + \bar\fk$, and $\bar\ff = \ff + \bar\fk$.
 \end{defn}
 
 Note that for an augmentation, only the additional brackets involving $\bar\fk$ need to be specified (and Jacobi identity for $\bar\fg$ should be verified).
 
 \begin{ex} \label{X:sl2r2} Consider $\fg=\mathfrak{sl}(2,\bbC) \times \fr_2$, where $\fr_2$ is the unique 2-dimensional non-abelian Lie algebra, and basis $\{ H, X, Y, S, T \}$.  Aside from the $\fsl(2)$-triple, the only other non-trivial bracket is $[S,T] = T$.  Let $\alpha \neq 0$, $\beta \neq 0$, $\alpha\neq \beta$, and define an ILC triple $(\fg;\fe,\ff)$ via:
 \begin{align} \label{E:NS31}
 \fe = \langle H + \alpha S + T, X \rangle, \quad
 \ff = \langle H + \beta S + T, Y \rangle.
 \end{align}
 Here is an LC-adapted framing:
 \begin{align}
 \be_1 = \frac{1}{\beta-\alpha} (H + \alpha S + T), \quad
 \be_2 = X, \quad
 \bf_1 = H + \beta S + T, \quad
 \bf_2 = Y.
 \end{align}
 We compute $\bS = -t\beta \be_1 - 2t^2 \be_2 + \frac{t\alpha}{\beta-\alpha} \bf_1 + \frac{2}{\beta-\alpha} \bf_2$, hence $A_1 = \frac{t(\alpha+2)}{\alpha-\beta}$, $A_2 = t^2(\beta-2)$, and
 \begin{align}
 \cQ_4 = \frac{2(\alpha\beta + \beta - \alpha)}{\beta-\alpha} t^2.
 \end{align}
 Thus, the ILC structure is type $\sfO$ (hence, 15-dimensional symmetry) when $\alpha\beta = \alpha - \beta$, and type $\sfD$ otherwise (hence, at most 7-dimensional symmetry by \eqref{E:RootTypeSym}).  In the latter case, we now show that it is indeed 7-dimensional and is a realization of model \textsf{D.7} from \cite{ILC}.
 
 Let $\bar\fg = \fsl(2,\bbC)\times \fsl(2,\bbC) \times \bbC$ with basis $\{ H_1,X_1,Y_1,H_2,X_2,Y_2,Z \}$ consisting of $\fsl(2)$-triples $\{ H_i,X_i,Y_i \}$ and central element $Z$.  Given $\lambda \in \bbC^\times$, define an ILC quadruple $(\bar\fg,\bar\fe;\bar\ff,\bar\fk)$:
\begin{align}
\bar\fk = \langle H_1-Z, \lambda H_2-Z\rangle, \quad
\bar\fe = \langle X_1, X_2\rangle + \bar\fk, \quad
\bar\ff = \langle Y_1, Y_2\rangle + \bar\fk.
\end{align}

 For any $t \in \bbC$, define a monomorphism $\iota\colon\fg\to\bar\fg$ sending $H \mapsto H_1$, $X \mapsto X_1$, $Y \mapsto Y_1$, and
 \begin{align}
 \begin{cases}
 S \mapsto -\tfrac{\alpha+\beta}{2(\alpha-\beta)}H_2 + \tfrac{\beta}{\alpha-\beta}X_2-\tfrac{\alpha}{\alpha-\beta} Y_2 +tZ, \\
 T \mapsto + \tfrac{\alpha\beta}{\alpha-\beta}H_2 - \tfrac{\beta^2}{\alpha-\beta} X_2
 + \tfrac{\alpha^2}{\alpha-\beta}Y_2.
 \end{cases}
 \end{align}
which implies
\begin{align}
   \iota( H + \alpha S + T) &= H_1 -\tfrac{\alpha}{2} H_2 + \beta X_2 + \alpha tZ,\\
   \iota( H + \beta S + T ) &= H_1 + \tfrac{\beta}{2} H_2 + \alpha Y_2 + \beta tZ.
\end{align}
 Thus, $\iota(\fe)\subset \bar\fe$ and $\iota(\ff)\subset\bar\ff$ if and only if $\lambda (\alpha t+1) =\tfrac{\alpha}2$ and $\lambda (\beta t+1) = -\tfrac{\beta}2$.
Solving yields $t=-\frac{\alpha+\beta}{2\alpha\beta}$ and $\lambda=\frac{\alpha\beta}{\beta-\alpha} \in \bbC \backslash \{ 0,-1 \}$. (Recall $\alpha\beta\neq \alpha-\beta$ for non-flatness.)  These parameters uniquely define $\iota$ and provide an embedding from $(\fg;\fe,\ff)$ into $(\bar\fg,\bar\fk;\bar\fe,\bar\ff)$ for $\lambda=\frac{\alpha\beta}{\beta-\alpha}$.
Thus, $\dim(\ILCsym(\fg;\fe,\ff))$ is 15 when $\alpha\beta = \alpha-\beta$ and 7 otherwise.
 \end{ex}
 \section{Cases without 3-dimensional abelian ideals}
 \label{S:NoAb}
 Given an ILC triple $(\fg;\fe,\ff)$ an {\sl admissible anti-involution} is an anti-automorphism $\tau : \fg \to \fg$ with $\tau^2 = \id$ that swaps $\fe$ and $\ff$.  In this section, we will prove the following result:
 
 \begin{theorem} \label{T:NoAb} Let $\fg$ be a 5-dimensional complex Lie algebra without 3-dimensional abelian ideals.  There is a unique (up to isomorphism) ASD-ILC triple $(\fg;\fe,\ff)$ with $\dim(\ILCsym(\fg;\fe,\ff)) = 5$.  Namely, $\fg \cong \saff(2,\bbC)$ together with $\fe$ and $\ff$ given by \eqref{E:NS2-EF}, and such $(\fg;\fe,\ff)$ has a unique admissible anti-involution.
 \end{theorem}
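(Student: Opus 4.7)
The plan is to combine the structural classification of Proposition \ref{P:NoAb}---the short list of 5-dimensional complex Lie algebras without a 3-dimensional abelian ideal---with the quartic root-type filter \eqref{E:RootTypeSym} and the embedding device of Example \ref{X:sl2r2}. For each algebra $\fg$ on the list, I first enumerate its admissible anti-involutions $\tau$ up to $\mathrm{Aut}(\fg)$-conjugation. Since an ASD-ILC triple is determined by the pair $(\tau,\fe)$ via $\ff := \tau(\fe)$, I then parametrize the 2-dimensional subalgebras $\fe \subset \fg$ modulo the centralizer of $\tau$, retaining only those for which $\fe \cap \tau(\fe) = 0$ and $\eta$ is non-degenerate. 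This reduces the problem to a finite list of ASD-ILC candidates on each $\fg$.

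For each candidate I compute $\cQ_4$ from the left-invariant formula \eqref{E:Phi-ST}. By \eqref{E:RootTypeSym}, any candidate of root type $\pI{}$ or $\pII{}$ automatically has $\dim \ILCsym(\fg;\fe,\ff) = 5$, and Example \ref{X:saff} already exhibits one such triple on $\saff(2,\bbC)$ of the form \eqref{E:NS2-EF}; its ASD property is verified by checking that $H \mapsto -H$, $X \leftrightarrow Y$, $v_1 \leftrightarrow -v_2$ extends to a Lie anti-involution swapping $\fe$ and $\ff$. For candidates whose quartic has root type $\pD{}$, $\pIII{}$, $\pN{}$, or $\pO{}$, I would construct an explicit embedding (Definition \ref{D:embedding}) into a larger ILC quadruple $(\bar\fg, \bar\fk; \bar\fe, \bar\ff)$ with $\bar\fk \neq 0$, following the template of Example \ref{X:sl2r2}: embed $\fg$ into a semisimple-plus-centre parent $\bar\fg$ and solve the Jacobi-compatible parameter constraints forcing $\fe, \ff$ into $\bar\fe, \bar\ff$. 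Any such embedding forces $\dim \ILCsym > 5$ and discards the candidate.

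Uniqueness up to isomorphism then follows once every algebra on the Proposition \ref{P:NoAb} list other than $\saff(2,\bbC)$ is exhausted by the previous discarding step, and once the surviving candidates inside $\saff(2,\bbC)$ are all brought to the normal form \eqref{E:NS2-EF} by an inner automorphism (the group $\mathrm{SL}(2,\bbC) \ltimes \bbC^2$ acts with enough freedom to normalize any ASD pair of 2-dimensional subalgebras with $\pI{}$-quartic). For the final uniqueness of the admissible anti-involution on the surviving model, I observe that any Lie anti-involution of $\saff(2,\bbC)$ must stabilize both the nilradical $\bbC^2$ and a Levi factor $\fsl(2,\bbC)$ (the latter up to conjugation by an element of the nilradical, which can be absorbed), after which the conditions $\tau^2 = \id$ and $\tau(\fe) = \ff$ rigidify $\tau$ entirely in the basis $\{H,X,Y,v_1,v_2\}$.

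The hard part will be the systematic embedding construction for the non-$\saff(2,\bbC)$ entries of Proposition \ref{P:NoAb}: each algebra with a ``degenerate'' root type requires its own choice of parent $\bar\fg$ and its own Jacobi-compatible parameter tuning. Example \ref{X:sl2r2} gives a clean template (semisimple parent enlarged by a central line, with parameters $t$ and $\lambda$ solved from the containment conditions $\iota(\fe) \subset \bar\fe$, $\iota(\ff) \subset \bar\ff$), but the computation must be redone per algebra, and in each case one must verify that the prescribed $\bar\fk$ is indeed a subalgebra whose induced action on the contact subspace is non-trivial. Once this purge is complete, the $\saff(2,\bbC)$-model is the only one left and the uniqueness statements drop out.
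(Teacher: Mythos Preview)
Your overall architecture---run through the Proposition~\ref{P:NoAb} list, compute $\cQ_4$ per candidate, keep only root types $\pI{}$/$\pII{}$, and kill the rest by embedding---matches the paper's proof in substance. The paper organizes the enumeration differently, however: rather than fixing $\tau$ first and then parametrizing $\fe$ modulo the centralizer of $\tau$, it parametrizes the pair $(\fe,\ff)$ directly (using structural features of each $\fg$, e.g.\ the projection $\pi:\saff(2,\bbC)\to\fsl(2,\bbC)$ and transitivity of $\SL(2,\bbC)$ on pairs of distinct lines), invoking ASD only as an occasional symmetry constraint. This avoids a separate classification of anti-involutions and tends to be shorter, since the $(\fe,\ff)$-parametrization already uses the full $\Aut(\fg)$ rather than just the centralizer of a fixed $\tau$. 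Your route is viable but will generate more cases to reconcile at the end.

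Two small corrections. First, the anti-involution you write down, $v_1\leftrightarrow -v_2$, does \emph{not} swap $\fe$ and $\ff$: with $\fe=\langle H+v_1,X\rangle$ and $\ff=\langle H-v_2,Y\rangle$ one needs $\tau(H+v_1)\in\ff$, which forces $\tau(v_1)=v_2$ (no sign), exactly as in \eqref{E:saff-AI}. Second, your uniqueness-of-$\tau$ sketch (``$\tau$ stabilizes a Levi factor up to conjugation, which can be absorbed'') is softer than needed. The paper's argument is sharper and avoids any Levi-factor gymnastics: since $\tau$ swaps $\fe$ and $\ff$ it must swap the derived lines $[\fe,\fe]=\langle X\rangle$ and $[\ff,\ff]=\langle Y\rangle$; their images in the radical then force $\tau$ to swap $\langle v_1\rangle$ and $\langle v_2\rangle$; and $\langle H\rangle$ is pinned down as the common normalizer of these four lines. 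That chain of forced swaps, together with the bracket relations, rigidifies $\tau$ completely without appealing to an invariant Levi complement.
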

 
 The proof begins by establishing (in Proposition \ref{P:NoAb}) the classification of all 5-dimensional {\em complex} $\fg$ without 3-dimensional abelian ideals.  For each $\fg$ in this list, we investigate the ASD-ILC triples $(\fg;\fe,\ff)$ that it can support, but discard those with $\dim(\ILCsym(\fg;\fe,\ff)) \geq 6$.
 
 \subsection{A key classification result} 
 \label{SS:NoAb}
 A feature of the proof of the following result is its independence of the known Mubarakzyanov classification of 5-dimensional {\em real} Lie algebras \cite{mub1963}. 
 
 \begin{prop} \label{P:NoAb} Any 5-dimensional complex Lie algebra $\fg$ without 3-dimensional abelian ideals is isomorphic to one of the following:
 \begin{enumerate}
 \item[(NS1)] $\fsl(2,\bbC) \times \bbC^2$;
 \item[(NS2)] $\fsl(2,\bbC) \ltimes \bbC^2$;
 \item[(NS3)] $\fsl(2,\bbC) \times \fr_2$, where $\fr_2$ is a 2-dimensional non-abelian Lie algebra;
 \item[(SOL)] the Lie algebra of upper-triangular matrices in $\fsl(3,\bbC)$.
 \end{enumerate}
 \end{prop}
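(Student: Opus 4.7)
We plan to split on the Levi decomposition $\fg = \fs \ltimes \fr$. Since no complex semisimple Lie algebra has dimension $4$ or $5$, either $\fs = 0$ (so $\fg$ is solvable) or $\fs \cong \fsl(2,\bbC)$ with $\dim\fr = 2$. In the latter case, $\fr \cong \bbC^2$ or $\fr \cong \fr_2$. For $\fr \cong \bbC^2$, complete reducibility of $\fsl(2,\bbC)$-modules forces the action on $\fr$ to be either trivial (yielding NS1) or the standard representation (yielding NS2). For $\fr \cong \fr_2$, the derivation algebra $\operatorname{Der}(\fr_2)$ is a $2$-dimensional solvable Lie algebra; since $\fsl(2,\bbC)$ is perfect, any homomorphism into it is trivial, producing NS3. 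A direct inspection confirms each of NS1, NS2, NS3 has no $3$-dimensional abelian ideal.

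The bulk of the argument is the solvable case. The decisive tool is Lie's theorem: over $\bbC$, every $\fg$-module admits a complete flag of submodules. Applied to an abelian ideal $A \subseteq \fg$ (viewed as a $\fg$-module via restriction of the adjoint action), this shows the hypothesis ``no $3$-dim abelian ideal'' is equivalent to the stronger ``every abelian ideal has dimension at most $2$''. In particular $\dim Z(\fg) \leq 2$, and the center $Z(\mathfrak{n})$ of the nilradical $\mathfrak{n}$ -- a characteristic ideal of $\mathfrak{n}$, hence a $\fg$-ideal -- satisfies $\dim Z(\mathfrak{n}) \leq 2$.

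We then stratify by $\mathfrak{n}$, noting that $[\fg,\fg] \subseteq \mathfrak{n}$ makes $\fg/\mathfrak{n}$ abelian. The case $\fg$ nilpotent is excluded by a short case analysis through the $5$-dim complex nilpotent Lie algebras, in each producing a $3$-dim abelian ideal from $Z(\fg)$ and a $\fg$-stable complement. The cases $\dim\mathfrak{n} \in \{1,2\}$ are eliminated by short dimension counts (using Lie's theorem on $\mathfrak{n}$ and the bound on the dimension of abelian subalgebras of upper-triangular $2\times 2$ matrices, plus an auxiliary argument showing that the action of $\fg/\mathfrak{n}$ on $\mathfrak{n}$ must be faithful lest $\mathfrak{n}+\bbC h$ provide an unwanted abelian ideal). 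The case $\dim\mathfrak{n} = 4$ is ruled out by inspecting the three $4$-dim complex nilpotent types ($\bbC^4$, $\mathfrak{h}_3 \oplus \bbC$, and the $4$-dim filiform), each of which carries a $3$-dim abelian characteristic ideal. The case $\mathfrak{n} \cong \bbC^3$ is immediate, leaving $\mathfrak{n} \cong \mathfrak{h}_3$ (Heisenberg), with $\fg/\mathfrak{n}$ embedded as a $2$-dim abelian subalgebra of $\operatorname{Out}(\mathfrak{h}_3) \cong \mathfrak{gl}(2,\bbC)$.

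The final and most delicate step -- the main obstacle -- is to pin down $\fg \cong$ SOL. Up to conjugacy, the $2$-dim abelian subalgebras of $\mathfrak{gl}(2,\bbC)$ are either the Cartan subalgebra of diagonal matrices or $\langle I,E_{12}\rangle$. In the Cartan case, after choosing lifts $h_1,h_2 \in \fg$ the bracket $[h_1,h_2]$ lies in $Z(\mathfrak{n})$ (by Jacobi against a basis of $\mathfrak{n}$) and can be absorbed into the lifts, yielding a split extension which one explicitly identifies with the upper-triangular subalgebra of $\fsl(3,\bbC)$ via a change of Cartan basis. In the remaining case, a direct calculation exhibits a $3$-dim abelian ideal spanned by $Z(\mathfrak{n})$, a weight vector in $\mathfrak{h}_3 \setminus Z(\mathfrak{h}_3)$, and a lift of $E_{12}$ to $\fg$, contradicting the hypothesis. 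Hence only the Cartan case survives and $\fg \cong$ SOL, completing the classification.
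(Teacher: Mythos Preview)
Your overall strategy matches the paper's: split by Levi decomposition, handle the non-solvable case via the three possible $2$-dimensional radicals, then stratify the solvable case by the nilradical $\mathfrak{n}$, arriving at $\mathfrak{n}\cong\mathfrak{h}_3$ and the Cartan image in $\mathfrak{gl}(2,\bbC)$. The non-solvable branch, the $\dim\mathfrak{n}\in\{1,2\}$ elimination, and your treatment of the $\langle I,E_{12}\rangle$ image (ruling it out by exhibiting a $3$-dimensional abelian ideal rather than, as the paper does, by noting that any lift of $E_{12}$ would be ad-nilpotent and hence lie in $\mathfrak{n}$) are all fine.

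There is, however, a genuine error in your $\dim\mathfrak{n}=4$ step. You assert that each of $\bbC^4$, $\mathfrak{h}_3\oplus\bbC$, and the filiform $\mathfrak{n}_4$ ``carries a $3$-dimensional abelian characteristic ideal.'' This is false for the first two. In $\bbC^4$ the automorphism group is $\mathrm{GL}(4,\bbC)$, which acts transitively on hyperplanes, so no proper nonzero characteristic ideal exists. In $\mathfrak{h}_3\oplus\bbC$ (basis $P,Q,R,C$ with $[P,Q]=R$) every $3$-dimensional abelian ideal has the form $\langle R,C,\,aP+bQ\rangle$ for some $[a:b]\in\bbP^1$, and the automorphism $P\mapsto 2P,\ Q\mapsto Q,\ R\mapsto 2R,\ C\mapsto C$ moves each of these; none is characteristic. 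Only $\mathfrak{n}_4$ actually has such an ideal (the centralizer of its second center). The paper's argument avoids this trap by using the extra element $S\in\fg\setminus\mathfrak{n}$: for $\mathfrak{n}=\bbC^4$ one takes an $\mathrm{ad}_S$-invariant hyperplane, and for $\mathfrak{n}=\mathfrak{h}_3\oplus\bbC$ one takes the preimage in $\mathfrak{n}$ of an $\mathrm{ad}_S$-eigenline in $\mathfrak{n}/Z(\mathfrak{n})$. In both cases the resulting $3$-dimensional abelian subspace is an ideal of $\fg$ because $\fg=\bbC S\oplus\mathfrak{n}$, but it is \emph{not} characteristic in $\mathfrak{n}$.

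A smaller point: your nilpotent case appeals to the classification of $5$-dimensional complex nilpotent Lie algebras, with a vague recipe (``from $Z(\fg)$ and a $\fg$-stable complement'') for producing the ideal. The paper gives instead a self-contained argument: for a maximal abelian ideal $\fa$ in nilpotent $\fg$ one has $\ker(\mathrm{ad}|_\fa)=\fa$, whence $\dim\fg\le\dim\fa+\binom{\dim\fa}{2}$, forcing $\dim\fa\ge 3$. This avoids importing any classification and fits the paper's stated aim of independence from Mubarakzyanov's list.
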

 
 \begin{proof}
Consider the following cases.
 \begin{enumerate}
 \item \emph{$\fg$ is non-solvable.} By the Levi decomposition, $\fg \cong \fsl(2,\bbC) \ltimes \rad(\fg)$, where $\dim(\rad(\fg)) = 2$.  If $\rad(\fg)$ is abelian, then we get either (NS1) or (NS2).  Otherwise,  $\rad(\fg) \cong \fr_2$ and $\fsl(2,\bbC)$ acts trivially on it (since $\Der(\fr_2)$ is solvable) and we get (NS3).

 \item \emph{$\fg$ is solvable, but not nilpotent.} Let $\fn$ be the nilradical (i.e.\ maximal nilpotent ideal) of $\fg$, which coincides with the set of all nilpotent elements in $\fg$.  If $\fg$ has center $\cZ(\fg)$, then
 \begin{align}
 4 \geq \dim\,\fn \ge \half(\dim\,\fg + \dim\,\cZ(\fg)),
 \end{align}
so $\dim\,\fn = 3$ or $4$.  (See \cite{mub1966}, \cite[Thm.5.2]{OV1994} for the second inequality.) Consider $\rho\colon \fg \mapsto \Der(\fn),\quad u\mapsto \ad\,u|_{\fn}.$

 \begin{enumerate}
 \item \underline{$\dim(\fn)=3$}: by assumption, $\fn$ is non-abelian, so $\fn \cong \fn_3$, the 3-dimensional Heisenberg Lie algebra.  In a basis $\{P,Q,R\}$ of $\fn$ with only non-trivial bracket $[P,Q]=R$, we have:
\[
\Der(\fn_3)=\begin{pmatrix} 
	a_{11} & a_{12} & 0 \\
	a_{21} & a_{22} & 0 \\
	b_{1} & b_{2} & a_{11}+a_{22}
	\end{pmatrix}, \quad 
	\rho(\fn_3)=\begin{pmatrix} 
	0 & 0 & 0 \\
	0 & 0 & 0 \\
	b_{1} & b_{2} & 0
	\end{pmatrix}.
\]
In particular, $\Der(\fn_3)/\rho(\fn_3)\cong\gl(2,\bbC)$. 
By maximality of $\fn$, $\rho(T)$ is not nilpotent for any $T \not\in \fn$.  Let $\{S_1, S_2\}$ be a basis of a complementary subspace to $\fn$. Then $[S_1,S_2]\subset [\fg,\fg]\subset \fn$, and hence $\{\rho(S_1),\rho(S_2)\}\mod \rho(\fn_3)$ would form a basis of a commutative subalgebra in $\Der(\fn_3)/\rho(\fn_3)\cong\gl(2,\bbC)$ consisting of non-nilpotent elements (except for zero). But the only such subalgebra is conjugate to the subalgebra of diagonal matrices in $\gl(2,\bbC)$. So, adjusting elements $S_1$ and $S_2$ by $\fn_3$ if needed, we can assume that $\rho(S_1)=\diag(1,0,1)$ and $\rho(S_2)=\diag(0,1,1)$.

Let $[S_1,S_2]=u\in\fn_3$. Since $\rho(u)=\rho([S_1,S_2])=0$, we get that $u\in \cZ(\fn_3)$ and, thus, $u=\alpha R$ for some $\alpha\in\bbC$. Replacing $S_1$ by $S_1+\alpha R$ we can normalize $\alpha$ to $0$.  

Thus, $\fg$ is isomorphic to (SOL) via the map:
\begin{equation}\label{E:SOL-basis}
\begin{gathered}
P\mapsto \left(\begin{smallmatrix} 0 & 1 & 0 \\ 0 & 0 & 0 \\ 0 & 0 & 0 \end{smallmatrix}\right),\ 
Q\mapsto \left(\begin{smallmatrix} 0 & 0 & 0 \\ 0 & 0 & 1 \\ 0 & 0 & 0 \end{smallmatrix}\right),\ 
R\mapsto \left(\begin{smallmatrix} 0 & 0 & 1 \\ 0 & 0 & 0 \\ 0 & 0 & 0 \end{smallmatrix}\right),\\
S_1\mapsto \left(\begin{smallmatrix} \frac{2}{3} & 0 & 0 \\ 0 & -\frac{1}{3} & 0 \\ 0 & 0 & -\frac{1}{3} \end{smallmatrix}\right),\ 
S_2\mapsto \left(\begin{smallmatrix} \frac{1}{3} & 0 & 0 \\ 0 & \frac{1}{3} & 0 \\ 0 & 0 & -\frac{2}{3} \end{smallmatrix}\right).
\end{gathered}
\end{equation} 

\item \underline{$\dim(\fn) = 4$}: Let $S\in\fg$ be any non-zero element not contained in $\fn$.  The Lie algebra $\fn$ is isomorphic to one of the three possible nilpotent algebras in dimension~$4$:
\begin{enumerate}
	\item $\fn=\bbC^4$. Then $\rho(S)$ necessarily preserves a 3-dimensional subspace in $\fn$, which will be an abelian ideal in $\fg$.
	\item $\fn=\fn_3\times \bbC$. It has a 2-dimensional center $\cZ(\fn)$. The action of $\rho(S)$ on $\fn/\cZ(\fn)$  preserves a one-dimensional subspace, whose pre-image in $\fn$ is an abelian ideal.
	\item $\fn=\fn_4$ with a basis $\{P,Q_1,Q_2,Q_3\}$ and non-zero brackets $[P,Q_1]=Q_2$, $[P,Q_2]=Q_3$.  Then the second element $\cZ_2(\fn)$ in the upper central series of $\fn$ is equal to $\langle Q_2,Q_3\rangle$.  Its centralizer is equal to $\langle Q_1, Q_2,Q_3\rangle$ and is an abelian ideal in $\fg$.  
\end{enumerate} 
\end{enumerate}
\item \emph{$\fg$ is nilpotent.} Let $\fa$ be a maximal abelian ideal of $\fg$. As in the previous case, consider the representation:
\[
   \rho\colon \fg \to \gl(\fa),\quad u\mapsto \ad\,u|_{\fa}.
\]
Let us show that $\ker \rho = \fa$. Indeed, otherwise the centralizer $\cZ_{\fg}(\fa)$ of $\fa$ in $\fg$ is strictly greater than $\fa$. Since $\fg$ is nilpotent, by Engel's theorem we can construct a sequence of ideals of $\fg$:
\[
  \fa \subset \fa_1 \subset \dots \subset \fa_r=\cZ_{\fg}(\fa)
\]
such that $\dim\,\fa_i = \dim\,\fa+i$ for $i=1,\dots,r$. But then $\fa_1$ is also abelian, which contradicts the maximality of $\fa$. 

So, if $\dim\,\fa=n$, then $\rho(\fg)$ is a subalgebra in $\gl(\fa)$ consisting of nilpotent elements. Then by Engel's theorem we get $\dim\, \fg/\fa\le n(n-1)/2$ and $\dim\,\fg\le n(n+1)/2$. Thus, we see that $n\ge 3$. 

The cases $n=3$ and $n=5$ are ruled out by hypothesis. Finally, if $n=4$, then, as in the solvable case with $\fn=\bbC^4$, we can find a 3-dimensional ideal in $\fa$.  
\end{enumerate}
\end{proof}
 \subsection{NS1} 
 For $\fg=\fsl(2,\bbC) \times \bbC^2$, if $(\fg;\fe,\ff)$ is an ILC triple, then the 2-dimensional center $\cZ(\fg) = \bbC^2$ must have non-trivial intersection with $C=\fe\oplus\ff$.  But this contradicts the non-degeneracy of $C$, so no such ILC triples exist.
 \subsection{NS2} 
 For $\fg=\mathfrak{saff}(2,\bbC) = \mathfrak{sl}(2,\bbC) \ltimes \bbC^2$, we use notation introduced in Example \ref{X:saff}. 
  
\begin{prop} \label{P:saff} For $\fg=\mathfrak{saff}(2,\bbC)$, 
any ASD-ILC triple $(\fg;\fe,\ff)$ is $\Aut(\fg)$-equivalent to:
\begin{align} \label{E:NS2-EF}
\fe = \left\langle H + v_1,  X\right\rangle, \quad \ff =\left\langle H - v_2, Y \right\rangle.
\end{align}
\end{prop}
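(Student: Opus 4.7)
My strategy is a case analysis based on how $\fe$ and $\ff$ sit relative to the characteristic abelian ideal $\fa := \bbC^2 = \langle v_1, v_2\rangle$, which is the nilradical of $\fg = \mathfrak{saff}(2,\bbC)$. Since $\fa$ is the unique maximal nilpotent ideal, it is preserved by every $\phi \in \Aut(\fg)$ and by every admissible anti-involution $\tau$, so $\tau$ descends to an anti-involution $\bar\tau$ on $\fg/\fa \cong \fsl(2,\bbC)$ swapping $\bar\fe := \pi(\fe)$ and $\bar\ff := \pi(\ff)$. In particular $\dim \bar\fe = \dim \bar\ff =: d \in \{0,1,2\}$, so $\dim(\fe \cap \fa) = \dim(\ff \cap \fa) = 2 - d$.

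I would first dispose of the low-dimensional images. If $d = 0$, then $\fe = \ff = \fa$, contradicting $\fe \cap \ff = 0$. If $d = 1$, the subspaces $\fe \cap \fa$ and $\ff \cap \fa$ are both $1$-dimensional inside the $2$-dimensional $\fa$; they either coincide (so $\fe \cap \ff \supseteq \fe \cap \fa \neq 0$, impossible) or span $\fa$ (so $\fa \subseteq C := \fe \oplus \ff$, whence $[\fa, C] \subseteq [\fa,\fg] \subseteq \fa \subseteq C$ exhibits $\fa$ as a non-trivial kernel for $\eta : \bigwedge^2 C \to \fg/C$, violating non-degeneracy). Finally, if $d = 2$ with $\bar\fe = \bar\ff$ a common Borel, then $\fe$ and $\ff$ are graphs of $\fa$-valued maps on this Borel whose difference must be an isomorphism; one sees $\fa + \bar\fe \subseteq C$, forcing equality by dimension count, and then every bracket within $C$ lies in $\fa + \bar\fe = C$, again killing $\eta$.

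This leaves the case $d = 2$ with distinct Borels. Since $\Aut(\fsl(2,\bbC)) = \mathrm{PGL}(2,\bbC)$ is $2$-transitive on pairs of Borels, I can normalize $\bar\fe = \langle H, X\rangle$ and $\bar\ff = \langle H, Y\rangle$, so $\fe = \langle H + u_1, X + u_2\rangle$ and $\ff = \langle H + w_1, Y + w_2\rangle$ with $u_i,w_i \in \fa$. Imposing the subalgebra condition pins down $u_1 = a v_1 - c v_2,\ u_2 = c v_1$ and $w_1 = h v_1 + f v_2,\ w_2 = h v_2$ for scalars $a,c,h,f \in \bbC$. Because $\fa$ is abelian and $(\ad \fa)^2 = 0$ on $\fg$, the inner automorphism $\exp(\ad(\lambda v_1 + \mu v_2)) = 1 + \ad(\lambda v_1 + \mu v_2)$ shifts $(a,c,h,f) \mapsto (a - \lambda,\, c - \mu,\, h - \lambda,\, f + \mu)$; choosing $(\lambda, \mu) = (a, c)$ normalizes $\fe = \langle H, X\rangle$ and leaves $\ff = \langle H + h v_1 + f v_2, Y + h v_2\rangle$ with relabeled parameters. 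A direct bracket computation shows $\eta$ is non-degenerate if and only if $h f \neq 0$. Under the Cartan rescaling $\mathrm{Ad}(\mathrm{diag}(\sigma, \sigma^{-1})) \in \mathrm{SL}(2,\bbC)$ combined with the outer $\bbC^\times$-scaling of $\fa$, the parameters transform as $(h, f) \mapsto (\rho \sigma h,\, \rho \sigma^{-1} f)$, which drives any non-zero pair to $(-1, -1)$; reverting the normalization through $(\lambda, \mu) = (-1, 0)$ recovers exactly \eqref{E:NS2-EF}.

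Finally, I would verify the ASD condition and uniqueness of the anti-involution. The descended $\bar\tau$ on $\fsl(2,\bbC)$ must swap the two opposite Borels, which relative to the $H$-Cartan forces $\bar\tau(H) = H$, $\bar\tau(X) = \delta Y$, $\bar\tau(Y) = \delta^{-1} X$ for some $\delta \in \bbC^\times$; the intertwining with the $\fsl(2,\bbC)$-action on $\fa$, together with $\tau^2 = \id$ and the requirement $\tau(H + v_1) \in \ff$, pins down $\delta = -1$ and $\tau(v_1) = -v_2,\ \tau(v_2) = -v_1$. The main obstacle is a uniform disposal of the low-$d$ and same-Borel cases; the unifying technical observation is that whenever $\fa \subseteq C$ one automatically has $[\fa, C] \subseteq \fa \subseteq C$, so $\eta$ admits a non-trivial kernel and the ILC triple condition fails.
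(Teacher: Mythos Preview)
Your argument for the proposition itself is correct and is essentially the paper's approach: use that the radical $\fa=\bbC^2$ is $\tau$-stable, rule out $\fe\cap\fa\neq 0$ and $\pi(\fe)=\pi(\ff)$ via degeneracy of $C$, then normalize via $\Aut(\fg)$. The only cosmetic difference is the order of normalizations---the paper kills two parameters symmetrically by translations and then uses scalings and a swap, whereas you first normalize $\fe$ to $\langle H,X\rangle$ and then rescale the two parameters of~$\ff$.

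Your final paragraph, however, strays into the content of Proposition~\ref{P:saff-AI} and contains a genuine error. Applying the Lie-algebra property of $\tau$ to $[H,X]=2X$ with $\tau(X)=\delta Y$ yields $[\tau(H),\delta Y]=2\delta Y$, hence $\tau(H)=-H$, not $H$. Carrying this through one finds $\delta=1$ and $\tau(v_1)=v_2$, $\tau(v_2)=v_1$, exactly as in~\eqref{E:saff-AI}. Your proposed map ($\tau(H)=H$, $\tau(X)=-Y$, $\tau(v_1)=-v_2$) does satisfy $\tau(H+v_1)=H-v_2\in\ff$, but it is not a Lie-algebra homomorphism: $\tau([H,X])=\tau(2X)=-2Y$ while $[\tau(H),\tau(X)]=[H,-Y]=2Y$. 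This does not affect your proof of Proposition~\ref{P:saff} proper, but the anti-involution computation should be corrected if you retain it.
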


\begin{proof} Observe that $\bbC^2 = \rad(\fg)$, so it is preserved by any anti-involution.  Assuming $\fe \cap \bbC^2 \neq 0$, then $\ff \cap \bbC^2 \neq 0$ has the same dimension by the ASD property.  In this case, $\fe \cap \ff = 0$ implies $\bbC^2 \subset C = \fe \oplus \ff$.  But $\bbC^2 \subset \fg$ is an ideal, so this contradicts non-degeneracy of $C$.  Thus, we can assume that $\fe \cap \bbC^2 = \ff \cap \bbC^2 = 0$.
	
Consider the quotient homomorphism $\pi\colon \fg\to \fg/\bbC^2=\fsl(2,\bbC)$.  Since $\fe$ and $\ff$ are both transverse to $\bbC^2$, then $\pi(\fe)$ and $\pi(\ff)$ are both 2-dimensional subalgebras of $\fsl(2,\bbC)$ that are {\em distinct}.  (If $\pi(\fe) = \pi(\ff)$, then $C = \fe \oplus \ff \subset \ff \ltimes \bbC^2$, hence $C = \ff \ltimes \bbC^2$ since both have dimension 4.  But $\ff \oplus \bbC^2$ is a subalgebra, which contradicts non-degeneracy of $C$.)

 Any 2-dimensional subalgebra of $\fsl(2,\bbC)$ coincides with the isotropy of some line in $\bbC^2$.  Since $\SL(2,\bbC)$ acts transitively on pairs of distinct lines in $\bbC^2$, then we can assume up to $\Aut(\fg)$ that $\pi(\fe) \equiv \langle H, X \rangle$ and $\pi(\ff) \equiv \langle H,Y \rangle$.  Closure under the Lie bracket implies:
 \begin{align}
\fe = \left\langle H  + \begin{psm} a_1 \\ b_1 \end{psm},  X - \begin{psm} b_1 \\ 0 \end{psm} \right\rangle, \quad
\ff =\left\langle H  + \begin{psm} a_2 \\ b_2 \end{psm},  Y  + \begin{psm} 0 \\ a_2 \end{psm} \right\rangle,
 \end{align}
where we identify $v_1 = \begin{psm} 1\\0\end{psm}$ and $v_2 = \begin{psm} 0\\1\end{psm}$.  Note that $\Aut(\fg)$ contains the following:
\begin{enumerate}
\item[(i)] translations of $\bbC^2$ induce $(a_1,b_1,a_2,b_2) \mapsto (a_1 + r, b_1 + s, a_2 + r, b_2 + s)$ for any $r,s \in \bbC$.  We use this to normalize $a_2 = b_1 = 0$.
\item[(ii)] the scaling $(v_1,v_2,H,X,Y) \mapsto (\lambda v_1, \mu v_2, H, \frac{\lambda}{\mu} X, \frac{\mu}{\lambda} Y)$ for any $\lambda,\mu \in \bbC^\times$.  This induces the scaling $(a_1,b_2) \mapsto (\lambda a_1, \mu b_2)$.
\item[(iii)] the swap $(v_1,v_2,H,X,Y) \mapsto (v_2,v_1,-H,Y,X)$ induces $(a_1,b_2) \mapsto (-b_2,-a_1)$. 
\end{enumerate}
 Since $\fe \cap \ff =0$, then $(a_1,b_2) \neq (0,0)$.  Using (iii), we may assume that $a_1 \neq 0$, and then normalize $a_1 = 1$ using (ii).  
 \begin{itemize}
 \item $b_2 \neq 0$: Using (ii), normalize to $b_2 = -1$.  Then (iii) determines both a residual involution as well as an anti-involution.
 \item $b_2 = 0$: $\fe = \left\langle H  + v_1,  X \right\rangle$ and $\ff =\left\langle H,  Y \right\rangle$. But clearly $[X,\,\cdot\,] \equiv 0\mod C$, which contradicts non-degeneracy of $C$.
 \end{itemize}
\end{proof}

 From Example \ref{X:saff}, we saw that \eqref{E:NS2-EF} has root type $\mathsf{I}$ and $\dim(\ILCsym(\fg;\fe,\ff)) = 5$.
 
 \begin{prop} \label{P:saff-AI} For $(\fg;\fe,\ff)$ as in Proposition \ref{P:saff}, the unique admissible anti-involution $\tau$ is:
 \begin{align} \label{E:saff-AI}
 (H,X,Y,v_1,v_2) \mapsto (-H,Y,X,v_2,v_1).
 \end{align}
 \end{prop}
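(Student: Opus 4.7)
The plan is to determine $\tau$ by successive reduction, exploiting that the ideal $\bbC^2 = \langle v_1, v_2\rangle$ is characteristic (being the radical of $\fg$), hence preserved by any anti-involution.  Consequently, $\tau$ descends to an anti-involution $\bar\tau$ of the quotient $\fg/\bbC^2 \cong \fsl(2,\bbC)$, which must swap the two Borel subalgebras $\pi(\fe) = \langle H, X\rangle$ and $\pi(\ff) = \langle H, Y\rangle$.

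First, I would pin down $\bar\tau$.  Since $\bar\tau$ swaps these two Borels, $\bar\tau(H)$ lies in their intersection $\langle H\rangle$, so $\bar\tau(H) = cH$.  Writing $\bar\tau(X) = aH + bY \in \pi(\ff)$ and applying $\bar\tau$ to $[H,X] = 2X$ (using $\bbC$-antilinearity so that the real scalar $2$ is fixed) immediately forces $a = 0$ and $c = -1$.  A symmetric computation gives $\bar\tau(Y) = b'X$; then $[X,Y] = H$ combined with $\bar\tau^2 = \id$ yields $b' = 1/b$ and $b \in \bbR^\times$.

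Next, I would lift to $\fg$.  Writing $\tau(H) = -H + \xi$ with $\xi \in \bbC^2$, the requirement $\tau(H+v_1) \in \ff = \langle H - v_2, Y\rangle$ forces the $\fsl(2,\bbC)$-components to match: the coefficient on the $H-v_2$ generator must be $-1$ and that on $Y$ must be $0$.  This pins down $\xi = 0$ and $\tau(v_1) = v_2$.  A symmetric argument starting from $\tau(H - v_2) \in \fe$ gives $\tau(v_2) = v_1$.  Finally, applying $\tau$ to $[X,v_2] = v_1$ and using $\tau(X) = bY$ yields $bv_2 = v_2$, so $b = 1$.  The resulting candidate is exactly \eqref{E:saff-AI}.

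All that remains is a routine verification that \eqref{E:saff-AI} actually defines a $\bbC$-antilinear involutive Lie algebra automorphism swapping $\fe$ and $\ff$, by direct check against the brackets \eqref{E:sl2-triple} and \eqref{E:saff2-rels} and against the generators in \eqref{E:NS2-EF}.  The main care point throughout is tracking $\bbC$-antilinearity correctly: it is precisely the interaction of $\tau^2 = \id$ with antilinearity that constrains $b$ to be real, after which a single bracket relation in $\fg$ fixes $b = 1$; every other step is short linear algebra given the explicit bases of $\fe$ and $\ff$.
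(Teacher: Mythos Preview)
Your overall strategy is sound, but there is a gap at the lift step.  From $\tau(H+v_1)\in\ff$ alone you only obtain the single relation $\xi+\tau(v_1)=v_2$ in $\bbC^2$; it does not separately force $\xi=0$ and $\tau(v_1)=v_2$ as you assert, and the ``symmetric argument'' with $\tau(H-v_2)\in\fe$ likewise yields only $\xi-\tau(v_2)=-v_1$.  A clean fix is available within your own framework: since $X\in\fe$ maps into $\ff=\langle H-v_2,Y\rangle$ and has quotient image $bY$, membership in $\ff$ already forces $\tau(X)=bY$ on the nose (no $\bbC^2$ correction), and similarly $\tau(Y)=b'X$.  Then applying $\tau$ to $[H,X]=2X$ and $[H,Y]=-2Y$ \emph{in $\fg$} (not merely in the quotient) gives $[\xi,Y]=0$ and $[\xi,X]=0$, whence $\xi=0$.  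After that your admissibility relations do pin down $\tau(v_1)=v_2$ and $\tau(v_2)=v_1$, and the bracket $[X,v_2]=v_1$ fixes $b=1$ as you say.

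The paper organizes the argument differently in a way that sidesteps the lift issue.  Rather than passing to the quotient, it identifies five $\tau$-permuted lines directly inside $\fg$: the derived subalgebras $[\fe,\fe]=\langle X\rangle$ and $[\ff,\ff]=\langle Y\rangle$ are swapped, their images on the radical $\langle v_1\rangle$ and $\langle v_2\rangle$ are then swapped, and the intersection of the normalizers of these four lines is exactly $\langle H\rangle$, which is therefore $\tau$-stable.  This gives $\tau(H)=aH$ in $\fg$ immediately, with no $\xi$ to eliminate; admissibility and the brackets then fix all constants.  Your quotient-then-lift route is equally valid once the gap above is patched, but trades that one-line normalizer observation for the extra work of controlling $\xi$.
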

 
 \begin{proof} Since $\fe$ and $\ff$ are non-abelian, then $\tau$ must swap the lines $[\fe,\fe] = \langle X \rangle$ and $[\ff,\ff] = \langle Y \rangle$.  These act on the radical $\rad(\fg) = \bbC^2 = \langle v_1, v_2 \rangle$ with images $\langle v_1 \rangle$ and $\langle v_2 \rangle$ respectively.  Since $0 \neq \tau(v_1) = \tau([X,v_2]) = [\tau(X),\tau(v_2)]$ and $\tau(X) \in \langle Y \rangle$, we deduce that $\tau$ must swap $\langle v_1 \rangle$ and $\langle v_2 \rangle$.  Finally, $\tau$ must preserve $\langle H \rangle$, which is the intersection of the normalizers of the above four lines $\langle X \rangle$, $\langle Y \rangle$, $\langle v_1 \rangle$, $\langle v_2 \rangle$.  Since $\tau$ is admissible, it preserves $\fe$ and $\ff$, so $(H,X,Y,v_1,v_2) \stackrel{\tau}{\mapsto} (a H, b Y, c X, -a v_2, -a v_1)$.  Using \eqref{E:sl2-triple} and \eqref{E:saff2-rels}, the anti-involution property forces $(a,b,c) = (-1,1,1)$. \qedhere
 \end{proof}
 \subsection{NS3}
 Let $\fg=\mathfrak{sl}(2,\bbC) \times \fr_2$.  The $\fsl(2,\bbC)$ factor is the second derived algebra of $\fg$, while $\fr_2 = \rad(\fg)$, so both are preserved under any anti-involution.  Fix a basis $\{ H, X, Y, S, T \}$ as in Example \ref{X:sl2r2}.  Observe that $\Aut(\fr_2)$ consists of the transformations
 \begin{align} \label{E:Aut-r2}
 (S,T)\mapsto (S+rT, \lambda T), \quad r\in \bbC, \quad \lambda\in\bbC^\times.
 \end{align}

\begin{prop}
	Let $\fg=\mathfrak{sl}(2,\bbC) \times \fr_2$.  Any ASD-ILC triple $(\fg;\fe,\ff)$ has $\dim(\ILCsym(\fg;\fe,\ff)) \geq 6$.
\end{prop}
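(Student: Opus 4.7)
The plan is to reduce, via $\Aut(\fg)$, any ASD-ILC triple $(\fg;\fe,\ff)$ on $\fg = \fsl(2,\bbC) \times \fr_2$ to the family constructed in Example \ref{X:sl2r2}, where it was already shown that $\dim(\ILCsym(\fg;\fe,\ff)) \in \{7,15\}$, and so in particular $\geq 6$.

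First I would analyze how $\fe$ and $\ff$ meet the radical $\fr_2$. Since $\fr_2 = \rad(\fg)$ is invariant under any anti-involution, the ASD property forces $\dim(\fe \cap \fr_2) = \dim(\ff \cap \fr_2)$. If both are nonzero (hence $1$-dimensional), then $\fe \cap \ff = 0$ implies $(\fe \cap \fr_2) \oplus (\ff \cap \fr_2) = \fr_2 \subset C$; but $\fr_2$ is an ideal of $\fg$, so $[\fr_2, C] \subset \fr_2 \subset C$ makes $\eta$ degenerate on $\fr_2$, a contradiction. Hence $\fe \cap \fr_2 = \ff \cap \fr_2 = 0$, so the projection $\pi\colon \fg \to \fsl(2,\bbC)$ maps $\fe, \ff$ isomorphically to $2$-dimensional (Borel) subalgebras. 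An analogous non-degeneracy argument rules out $\pi(\fe) = \pi(\ff)$, since then $C$ would equal the $4$-dimensional subalgebra $\pi^{-1}(\pi(\fe))$. Inner $\SL(2,\bbC)$-action then lets me normalize $\pi(\fe) = \langle H, X \rangle$ and $\pi(\ff) = \langle H, Y \rangle$.

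Next I would solve the subalgebra conditions. Writing $\fe = \langle H + p,\, X + q \rangle$ with $p,q \in \fr_2$ and using $[H+p,X+q] = 2X + [p,q]$, closure collapses to $[p,q] = 2q$ in $\fr_2$. With $p = aS+bT$ and $q = cS+dT$, the relation $[S,T]=T$ forces $c = 0$ and $d(a-2) = 0$, so either $q = 0$, or $q \in \langle T\rangle \setminus\{0\}$ with $a = 2$. The symmetric analysis for $\ff = \langle H+p', Y+q'\rangle$ yields $q' = 0$, or $q' \in \langle T\rangle \setminus \{0\}$ with $a' = -2$. In the generic branch $q = q' = 0$, the residual automorphisms preserving the normalization in the previous step are generated by the Cartan rescaling $X \mapsto \lambda X,\, Y \mapsto \lambda^{-1}Y$, the Weyl swap, and $\Aut(\fr_2)$ acting as $(S,T) \mapsto (S + rT, \lambda T)$; applying these, I would normalize to $\fe = \langle H + \alpha S + T, X \rangle$, $\ff = \langle H + \beta S + T, Y \rangle$, matching Example \ref{X:sl2r2}.

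Finally, I would handle the remaining branches where $q$ or $q'$ is nonzero. Any admissible $\tau$ splits as $\tau_1 \times \tau_2$ where $\tau_1$ is an antilinear anti-involution of $\fsl(2,\bbC)$ swapping the two standard Borels, which one checks must be of the form $H \mapsto -H$, $X \mapsto r Y$, $Y \mapsto \bar r X$ for some $r \in \bbC^\times$. Imposing $\tau(\fe) = \ff$ then pairs the $(p,q)$-data of $\fe$ with the $(p',q')$-data of $\ff$; combined with the mutually exclusive constraints $a = 2$ versus $a' = -2$ and the residual $\Aut(\fr_2)$-normalization, one verifies that such configurations either reduce (after absorbing the $T$-component of $q$ into $p$ by a change of basis) to the family \eqref{E:NS31}, or fail to admit any admissible anti-involution at all. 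Invoking Example \ref{X:sl2r2} then gives $\dim(\ILCsym(\fg;\fe,\ff)) \geq 7$. The main obstacle I anticipate is precisely this last piece of ASD bookkeeping: confirming that no exceptional $q \neq 0$ branch yields a genuinely new, ASD-compatible triple outside the Example \ref{X:sl2r2} framework.
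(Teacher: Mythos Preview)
Your reduction of the case $q = q' = 0$ to the family \eqref{E:NS31} of Example~\ref{X:sl2r2} is correct and matches the paper's case (i).  The gap is in your final paragraph.  The branch with $q \neq 0$ does \emph{not} reduce to \eqref{E:NS31} and does \emph{not} fail the ASD condition.  Concretely, when $d_1 \neq 0$ one has $a_1 = 2$, and the ASD relation $a' = -\bar{a}$ (which follows from $\tau_2(S) \equiv S \bmod \langle T\rangle$) gives $a_2 = -2$; this is perfectly compatible with $d_2 \neq 0$, so both $q$ and $q'$ can be nonzero simultaneously.  Moreover, you cannot ``absorb the $T$-component of $q$ into $p$ by a change of basis'': the only element of $\fe$ whose $\fsl_2$-projection is $X$ is (a scalar multiple of) $X + d_1 T$ itself, so the $T$-term on $X$ is irremovable while keeping the normalized form $\pi(\fe) = \langle H, X\rangle$.

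The paper handles this branch separately: after normalizing $d_1 = d_2 = 1$ and $b_1 = b_2 =: \alpha$, one obtains the genuinely new family
\[
\fe = \langle H + 2S + \alpha T,\, X + T\rangle, \qquad
\ff = \langle H - 2S + \alpha T,\, Y + T\rangle \qquad (\alpha^2 + 4 \neq 0),
\]
which is not $\Aut(\fg)$-equivalent to \eqref{E:NS31}.  For $\alpha \neq 0$ the paper exhibits an explicit embedding into a $6$-dimensional ILC quadruple (model $\sfD.6$-$3$ from \cite{ILC} on $\fsl(2,\bbC) \times \fsl(2,\bbC)$), and for $\alpha = 0$ a direct computation gives $\cQ_4 = 0$.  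This embedding is the missing ingredient in your argument.
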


\begin{proof} Let $\pi_1 \colon \fg\to \fsl(2,\bbC)$ and $\pi_2 \colon \fg \to \fr_2$ be the natural projections.  As in the previous case, we may assume that $\pi_1(\fe) = \langle H,X \rangle$ and $\pi_1(\ff) = \langle H,Y \rangle$.  Thus,
 \begin{align}
 \fe = \left\langle H + a_1S+b_1T,  X+c_1S+d_1T \right\rangle,
 \end{align}
 which is a subalgebra if and only if $c_1 = 0$ and $(a_1-2)d_1=0$.
 \begin{enumerate}[(i)]
 \item \underline{$d_1=0$}: We have $[\fe,\fe]\subset \fsl(2,\bbC)$.  By the ASD property, $\ff$ satisfies $[\ff,\ff] \subset \fsl(2,\bbC)$.  Then
 \begin{align}
 \fe = \left\langle H + a_1S+b_1T,  X \right\rangle, \quad \ff = \left\langle H + a_2S+ b_2T,  Y \right\rangle.
 \end{align}
 Assume that $a_1=0$.  Then $\pi_2(\fe) \subset [\fr_2,\fr_2] = \langle T \rangle$.  Stability under any anti-involution implies that $a_2=0$.  But then $C=\fe\oplus\ff$ contains $[\fr_2,\fr_2]=\langle T\rangle$, which is an ideal in $\fg$. This contradicts non-degeneracy of $C$.  Thus, $a_1 \neq 0$ and similarly $a_2 \neq 0$. Note that $a_1\ne a_2$ as otherwise we again would have $\langle T \rangle \subset C$. 
	 
 The transformations \eqref{E:Aut-r2} induce $(a_1,b_1,a_2,b_2) \mapsto (a_1,b_1 \lambda + a_1 r,a_2,b_2 \lambda + a_2 r)$, which we use to normalize $b_1=b_2$.  If $b_1 = b_2 = 0$, then $C=\fe\oplus\ff=\fsl(2,\bbC)+\langle S\rangle$, which is degenerate (moreover, a subalgebra in $\fg$).  So, we can assume that $b_1=b_2\ne 0$ and rescale them to $1$. This gives us \eqref{E:NS31} with $\alpha\beta \neq 0, \alpha \neq \beta$.  In Example \ref{X:sl2r2}, we saw these are either type $\sfD$ or $\sfO$, with 7 or 15 symmetries respectively.\\

 \item \underline{$d_1\ne 0$}: Then $a_1=2$ and arguing similarly we obtain
 \begin{align}
 \fe = \left\langle H + 2S+b_1T,  X+d_1T \right\rangle, \quad \ff = \left\langle H -2S+b_2T, Y +d_2 T\right\rangle,
 \end{align}
	 where $d_2\ne 0$. Now conjugation by $\diag(\mu,\frac{1}{\mu}) \in \SL(2,\bbC)$ induces $(d_1,d_2)\mapsto (\frac{d_1}{\mu^2}, d_2 \mu^2)$, which, together with $\Aut(\fr_2)$, allows us to normalize $d_1=d_2=1$.  Using the remaining transformations $S \mapsto S+rT$ in $\Aut(\fr_2)$, we normalize $b_1=b_2$ and obtain:
 \begin{align}
 \fe = \left\langle H + 2S+\alpha T,  X+T \right\rangle, \quad
 \ff = \left\langle H  - 2S+\alpha T,  Y +T\right\rangle \quad
 (\alpha^2+4\ne 0).
 \end{align}
 The condition $\alpha^2+4 \neq 0$ is equivalent to $C=\fe \op \ff$ being non-degenerate.
 
 We now exhibit an embedding of $(\fg;\fe,\ff)$ into some $(\bar\fg,\bar\fk;\bar\fe,\bar\ff)$.  Consider $\bar\fg=\fsl(2,\bbC) \times \fsl(2,\bbC)$ with basis $\{ H_1, X_1, Y_1, H_2, X_2, Y_2 \}$ consisting of two $\fsl(2)$-triples.  Given $\alpha \neq 0$, define $\lambda=-\tfrac{\alpha}{2\sqrt{\alpha^2+4}} \in \bbC \backslash \{ 0, \pm \half \}$ and an ILC quadruple $(\bar\fg,\bar\fk;\bar\fe,\bar\ff)$ \cite[Model $\sfD$.6-3]{ILC} by:
 \begin{align}
 \begin{split}
 \bar\fk &= \langle H_1 - H_2 \rangle, \\
 \bar\fe &= \langle X_1 + \tfrac{2\lambda-1}{2\lambda+1} Y_2, X_2 + \tfrac{2\lambda-1}{2\lambda+1} Y_1 \rangle + \bar\fk, \\ 
 \bar\ff &= \langle X_1 + Y_2, X_2 + Y_1 \rangle + \bar\fk.
 \end{split}
 \end{align}
 We confirm that the following is a monomorphism $\iota\colon\fg\to\bar\fg$ with $\iota(\fe) \subset \bar\fe$ and $\iota(\ff) \subset \bar\ff$:
 \begin{align}
 \begin{split}
H &\mapsto \tfrac{\alpha}{\sqrt{\alpha^2+4}}(-\tfrac{2\lambda+1}{2\lambda}X_1-H_1+\tfrac{2\lambda-1}{2\lambda}Y_1), \\
X &\mapsto \tfrac{1}{\sqrt{\alpha^2+4}}(-\tfrac{2\lambda+1}{2\lambda-1}X_1 - H_1+\tfrac{2\lambda-1}{2\lambda+1}Y_1),\\
Y &\mapsto \tfrac{1}{\sqrt{\alpha^2+4}}(-X_1-H_1+Y_1), \\
S &\mapsto -\tfrac12(X_2+Y_2), \\
T &\mapsto \tfrac{1}{\sqrt{\alpha^2+4}}(X_2+H_2-Y_2).
 \end{split}
 \end{align}
 Finally, when $\alpha=0$, we use the LC-adapted framing
 \begin{align}
 \be_1 = X+T, \quad \be_2 = H + 2S, \quad \bf_1 = H  - 2S, \quad \bf_2 = Y +T
 \end{align}
 to compute $\bS = [\be_1 + t\be_2,t\bf_1 - \bf_2] = -2t \be_1 - \half \be_2 - \half \bf_1 + 2t \bf_2$ and confirm that $\cQ_4 = 0$.
 \end{enumerate}
  \end{proof}

 \subsection{SOL}
Let $\fg = \fb$ be the Lie algebra of upper-triangular matrices in $\fsl(3,\bbC)$.  Consider the basis $\{ S_1, S_2, P, Q, R \}$ from \eqref{E:SOL-basis}, which has non-trivial brackets
 \begin{align}
[S_1,P]=P, \quad [S_1,R]=R, \quad [S_2,Q]=Q, \quad [S_2,R]=R, \quad [P,Q]=R.
 \end{align}
 This has nilradical $\fn_3 = \langle P,Q,R \rangle$, which  agrees with the first derived algebra of $\fg$, so is preserved under any anti-involution.
 \begin{prop} \label{P:S-triples}
 Let $\fg = \fb \subset \fsl(3,\bbC)$.  Any ASD-ILC triple $(\fg;\fe,\ff)$ has $\dim(\ILCsym(\fg;\fe,\ff)) = 15$.
 \end{prop}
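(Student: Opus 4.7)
The plan is to reduce to a short list of normal forms for $(\fe,\ff)$ under $\Aut(\fg)$, then show in each surviving case that the fundamental quartic $\cQ_4$ vanishes identically, so by \eqref{E:RootTypeSym} the ILC symmetry dimension must be $15$.

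The starting point is to identify characteristic ideals of $\fg = \fb$: the nilradical $\fn_3 = \langle P,Q,R\rangle$, and the line $\langle R\rangle = [\fn_3,\fn_3]$, which is also the center of $\fn_3$. Both are stable under any admissible anti-involution $\tau$, and $\tau(R) = \pm R$. A key structural observation is that $R \notin C = \fe \oplus \ff$: indeed, $[R,\fg] \subset \langle R\rangle$, so if $R \in C$ then $R$ would lie in the radical of $\eta$, contradicting non-degeneracy of $C$. Consequently $\fn_3 \cap C$ injects into $\fn_3/\langle R\rangle \cong \bbC^2$, and the case $\fe \subset \fn_3$ is ruled out immediately: by the ASD property it would force $\ff \subset \fn_3$, whence $\dim C \leq 3$, absurd.

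Next I split on $d := \dim(\fe \cap \fn_3) = \dim(\ff \cap \fn_3) \in \{0,1\}$ (the equality follows from $\tau$-stability of $\fn_3$). When $d = 0$, both $\fe$ and $\ff$ project isomorphically onto $\fb/\fn_3 \cong \bbC^2$, so they can be written $\fe = \langle S_1 + m_1, S_2 + m_2\rangle$ and $\ff = \langle S_1 + m_1', S_2 + m_2'\rangle$ with $m_i, m_i' \in \fn_3$; closure under the bracket pins down the $m_i$ up to a finite choice. When $d = 1$, one writes $\fe = \langle S + m, n\rangle$ with $S \in \fb \setminus \fn_3$ and $m,n \in \fn_3$, and analogously for $\ff$. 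In both branches I reduce using $\Aut(\fg)$, which contains the two-dimensional torus rescaling $(P,Q,R) \mapsto (\lambda P,\mu Q,\lambda\mu R)$, conjugations by exponentials of nilpotent elements, shifts $S_i \mapsto S_i + n$ with $n \in \fn_3$, and the outer involution $(S_1,S_2,P,Q,R) \mapsto (S_2,S_1,Q,P,-R)$ coming from the $A_2$ Dynkin symmetry. Combined with $\fe \cap \ff = 0$, non-degeneracy of $C$, and admissibility of $\tau$, this leaves only a handful of normalized candidates.

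For each candidate I choose an LC-adapted framing $\{\be_1,\be_2,\bf_1,\bf_2\}$ spanning $\fe \oplus \ff$, verify the structure relations \eqref{E:str-rel}, and apply \eqref{E:Phi-ST}--\eqref{E:SA1A2-again} to compute $\cQ_4(t)$ as a polynomial of degree at most $4$. In every surviving case I expect the polynomial to vanish identically, forcing root type $\pO{}$ and hence symmetry dimension $15$. Alternatively, the conclusion can be confirmed by exhibiting an explicit embedding into the flat ILC quadruple on $\fsl(4,\bbC)$, using the natural appearance of $\fb$ in the grading of $\fsl(4,\bbC)$ noted in the remark on Borel geometry at the beginning of \S\ref{S:5ILC}. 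The main difficulty will be the bookkeeping of the case analysis --- enumerating admissible $(\fe,\ff)$ modulo $\Aut(\fg)$ and pairing them consistently with $\tau$ --- after which the evaluation of $\cQ_4$ is purely mechanical.
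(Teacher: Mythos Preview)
Your plan is correct and matches the paper's proof essentially step for step: split on $\dim(\fe\cap\fn_3)\in\{0,1\}$ (using $\tau$-stability of $\fn_3$ for the ASD equality), normalize via $\exp(\ad\,\fn_3)$, the torus, and the outer swap $(S_1,S_2,P,Q,R)\mapsto(S_2,S_1,Q,P,-R)$, arrive at a short list of candidates, and then verify $\cQ_4\equiv 0$ from an LC-adapted framing using \eqref{E:Phi-ST}. The paper lands on exactly three normal forms (labelled (S.1)--(S.3)) and tabulates the framing and $\bS$ in each case; your observation that $R\notin C$ and your exclusion of $\fe\subset\fn_3$ are the same preliminary reductions the paper makes.
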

 
\begin{proof} Consider two cases:
\begin{enumerate}[(i)]
\item \underline{$\fe\cap \fn_3 = 0$}: 
Let us normalize $\fe = \langle S_1 + \alpha_1 P + \beta_1 Q + \gamma_1 R, S_2 + \alpha_2 P + \beta_2 Q + \gamma_2 R \rangle$ using $\exp(\ad\, \fn_3)$.  Using $\exp(\ad_{t_1 P + t_3 R})$ and then $\exp(t_2 \,\ad_Q)$, we normalize $\alpha_1 = \gamma_1 = \beta_2 = 0$.  Since $\fe$ is a subalgebra, then $\alpha_2 = \beta_1 = \gamma_2 = 0$, so $\fe = \langle S_1, S_2 \rangle$.  Since $\fe$ is abelian and $\fe \cap \fn_3 = 0$, then (by ASD) $\ff$ is abelian and $\ff\cap\fn_3=0$, which yield 
 \begin{align} \label{E:S.1} \tag{S.1}
 \fe = \langle S_1, S_2 \rangle, \quad
 \ff = \langle S_1 + a_1 P + c_1 R, S_2 + b_2 Q + c_2 R \rangle,
 \end{align}
 where $c_2 := c_1 - a_1 b_2$.  Non-degeneracy of $C=\fe\op\ff$ is equivalent to $c_1 c_2 \neq 0$.\\

\item \underline{$\fe \cap \fn_3 \neq 0$}: Assuming $\fe \subset \fn_3$, then $\ff \subset \fn_3$ (by ASD), hence $C=\fe\op\ff \subset \fn_3$, which is a contradiction, so $\dim(\fe \cap \fn_3) = \dim(\ff\cap \fn_3) = 1$.  Also, $\fe\cap\fn_3 \ne \langle R \rangle$ and $\ff \cap \fn_3 \neq \langle R \rangle$, otherwise $\fe$ or $\ff$ would contain an ideal of $\fg$,  contradicting non-degeneracy of $C$.  Note $(S_1,S_2,P,Q,R) \mapsto (S_2,S_1,Q,P,-R)$ is an automorphism, so swapping $P,Q$ if necessary, we may assume that $\fe \cap \fn_3 = \langle P+a_0 Q + a_1 R \rangle$.  For the normalizer $\cN(\fe \cap \fn_3)$:
\begin{align} \label{E:eN}
 \fe \subset \cN(\fe \cap \fn_3) = \begin{cases}
 \langle S_1 + S_2, P+ a_0 Q, R\rangle, & a_0 \neq 0;\\
 \langle S_1,S_2,P,R \rangle, & a_0 = 0.
 \end{cases}
\end{align}
Assume $a_0 \neq 0$.  Then $\dim(\cN(\fe\cap \fn_3)) = 3 = \dim(\cN(\ff\cap \fn_3))$ by ASD, and $C \subset \langle S_1+S_2\rangle \ltimes \fn_3$, so $C$ would be degenerate.  Thus, $a_0 = 0$.

 Note that if $\ff \cap \fn_3 = \langle P + b_0 Q + b_1 R \rangle$, then $b_0 = 0$ as above, while \eqref{E:eN} implies that $C \subset \langle S_1,S_2,P,R \rangle$, so $C$ would be degenerate.  Thus, $\fe\cap \fn_3 =\langle P + \alpha R \rangle$ and $\ff\cap \fn_3 =\langle Q + \beta R \rangle$.
 Using $\exp(\ad\, \fn_3)$, we normalize $\alpha=\beta=0$. Then:
 \begin{align}
 \fe = \langle \alpha_{11} S_1 + \alpha_{12} S_2 + \gamma_1 R, P \rangle, \quad
 \ff = \langle \alpha_{21} S_1 + \alpha_{22} S_2 + \gamma_2 R, Q \rangle
 \end{align}
 \begin{enumerate}[(a)]
 \item \underline{$\fe$ \& $\ff$ non-abelian}: We may assume $\alpha_{11} = \alpha_{22} = 1$.  Use $\exp(t\,\ad_R)$ to normalize $\gamma_1 = 0$.  Since $\gamma_2 \neq 0$ by non-degeneracy, we may normalize $\gamma_2 = 1$.  Then:
 \begin{align} \label{E:S.2} \tag{S.2}
 \fe = \langle S_1 + \alpha S_2, P \rangle, \quad
 \ff = \langle \beta S_1 + S_2 + R, Q \rangle.
 \end{align}

 \item \underline{$\fe$ \& $\ff$ abelian}: $\alpha_{11} = \alpha_{22} = 0$.  Note $\alpha_{12}\alpha_{21} \neq 0$, otherwise $\fn_3 \subset C$, and so $C$ would be degenerate.  Using $\exp(t\,\ad_R)$, we normalize $\gamma_2 = 0$, so we may assume:
 \begin{align} \label{E:S.3} \tag{S.3}
 \fe = \langle S_2 + \gamma R, P \rangle, \quad
 \ff = \langle S_1, Q \rangle.
 \end{align}
 \end{enumerate}
 \end{enumerate}	
We confirm $\cQ_4 = 0$ in all three cases using LC-adapted framings and \eqref{E:Phi-ST}:
 \[
 \begin{array}{cccccc}
 & \be_1 & \be_2 & \bf_1 & \bf_2 & \bS\\ \hline
 \eqref{E:S.1} & S_2 & S_1 & \frac{1}{c_1} (S_1+a_1 P) + R & \frac{1}{c_2} (S_2+b_2 Q)+ R & \frac{1}{c_2} \be_1 - \frac{t^2}{c_1} \be_2 + t^2 \bf_1 - \bf_2\\
 \eqref{E:S.2} & S_1+\alpha S_2 & (1+\alpha) P & \beta S_1+S_2+R & Q & -t^2\beta \be_2 - \alpha \bf_2\\
 \eqref{E:S.3} & S_2+\gamma R & P & -\frac{1}{\gamma} S_1 & Q & \frac{t^2}{\gamma} \be_2 - \bf_2
 \end{array}
 \]
\end{proof}

These ILC structures are all flat.  The proof of Theorem \ref{T:NoAb} is now complete.

 \section{Cases with a 3-dimensional abelian ideal}
 \label{S:NTAb}
 In this section, we prove the following, which will reduce (see \S\ref{S:tubular}) the remainder of our study to tubes on an affinely homogeneous base (Corollary \ref{C:tubes}).
 
  \begin{theorem} \label{T:NTab}  Let $\fg$ be a 5-dimensional complex Lie algebra with a 3-dimensional abelian ideal $\fa$, and  $(\fg;\fe,\ff)$ an ASD-ILC triple with an admissible anti-involution $\tau$.  Suppose that we have $\dim(\ILCsym(\fg;\fe,\ff)) = 5$.  Then $\fa = \tau(\fa)$ with $\fe \cap \fa = \ff \cap \fa = 0$.
 \end{theorem}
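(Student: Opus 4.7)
I would prove the contrapositive: if $\tau(\fa) \neq \fa$, or if $\fa = \tau(\fa)$ but $\fe \cap \fa \neq 0$, then $\dim(\ILCsym(\fg;\fe,\ff)) \geq 6$. The underlying principle is that the right-invariant vector fields on any Lie group $G$ with Lie algebra $\fg$ always embed $\fg$ into $\ILCsym$, and an extra symmetry is produced by a left-invariant vector field $X_\xi^L$ as soon as $\xi$ normalizes both $\fe$ and $\ff$ while lying outside $\cZ(\fg)$. The abelian ideal $\fa$ is a natural supply of such candidate normalizers, since $[\fa, \fg] \subset \fa$ and $\ad_\xi$ annihilates $\fa$ for every $\xi \in \fa$. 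I would also immediately rule out $\dim(\fe \cap \fa) = 2$: this forces $\fe \subset \fa$, and under $\fa = \tau(\fa)$ also $\ff \subset \fa$, whence $\fe \oplus \ff \subset \fa$ has dimension $4$, contradicting $\dim \fa = 3$.

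For the main case $\fa = \tau(\fa)$ with $\dim(\fe \cap \fa) = 1$, I pick $0 \neq v \in \fe \cap \fa$ and $w := \tau(v) \in \ff \cap \fa$. Since $\fa$ is abelian, $[v, w] = 0$ and $\ad_v|_{\fa} = 0$, so $\ad_v$ factors through the $2$-dimensional quotient $\fg/\fa$ with image inside $\fa$; in particular $[v, \ff]$ has dimension at most $1$. If $[v, \ff] \subset \langle w \rangle$, then $v \in \cN_\fg(\ff) \cap \cN_\fg(\fe)$, so $X_v^L$ is an ILC symmetry, and provided $v \notin \cZ(\fg)$ it is independent from the right-invariant copy of $\fg$; the residual possibility $v \in \cZ(\fg)$ is handled by applying the same argument to $w$, and if both $v, w \in \cZ(\fg)$ then the $\tau$-invariant plane $\langle v, w \rangle \subset C$ is bracket-annihilated, contradicting the non-degeneracy of $\eta : \bigwedge^2 C \to \fg/C$. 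The remaining sub-case $[v, \ff] = \langle u \rangle$ with $u \notin \langle w \rangle$ does not immediately yield a visible extra vector field; here I would construct an embedding $(\fg;\fe,\ff) \hookrightarrow (\bar\fg, \bar\fk; \bar\fe, \bar\ff)$ with $\dim \bar\fk \geq 1$ in the spirit of Example~\ref{X:sl2r2}, enlarging $\fg$ by a grading derivation of $\fa$ and tuning parameters so that $\iota(\fe) \subset \bar\fe$ and $\iota(\ff) \subset \bar\ff$.

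For the case $\tau(\fa) \neq \fa$, the second $3$-dimensional abelian ideal $\fa' := \tau(\fa)$ forces strong constraints: $\fa + \fa'$ and $\fa \cap \fa'$ are $\tau$-invariant ideals, $[\fa, \fa'] \subset \fa \cap \fa'$ has dimension $1$ or $2$, and $\fg$ must be solvable of a restricted type. In each subcase I would either locate a non-central $\xi \in \fa \cap \fa'$ normalizing both $\fe$ and $\ff$ (hence producing $X_\xi^L$ as an extra symmetry), or perform an embedding argument as in the symmetric case.

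The principal obstacle is the embedding construction in the residual sub-cases: one must produce a concrete extension $\bar\fg$ of $\fg$ together with an auxiliary derivation generating $\bar\fk$, verifying both Jacobi and the required inclusions $\bar\fe \supset \fe$, $\bar\ff \supset \ff$. The organization of this analysis is parametrized by whether $\fg/\fa$ is abelian or isomorphic to $\fr_2$ and by the structure of $\fa$ as a module over $\fg/\fa$.
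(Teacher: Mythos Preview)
Your overall framework (contrapositive, case split on whether $\tau(\fa)=\fa$, and exhibiting a sixth symmetry via an augmentation) matches the paper's.  The gap is that your normalizer heuristic handles only a thin sub-case, while the decisive construction is left as ``I would construct an embedding\ldots in the spirit of Example~\ref{X:sl2r2}'' with no details.

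Concretely: in the $\tau$-stable branch with $\dim(\fe\cap\fa)=1$, your candidate $v\in\fe\cap\fa$ almost never normalizes~$\ff$.  In the paper's adapted basis one has $\fe=\langle X,e_1\rangle$, $\ff=\langle Y,e_3\rangle$, and the entry $B_{21}=1$ in \eqref{E:AB-gen} forces $[e_1,Y]$ to carry a nonzero $e_2$-component, so $[v,\ff]\not\subset\langle w\rangle$ \emph{always}.  A short check shows that for generic eigenvalues of $A=\ad_X|_\fa$ and $B=\ad_Y|_\fa$ the intersection of the normalizers of $\fe$ and $\ff$ in $\fg$ is zero; your ``remaining sub-case'' is therefore the generic one, not a residual corner.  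The paper fills this by first normalizing $(A,B)$ (splitting on $[A,B]=0$ versus $[A,B]=A-B$), reducing to a handful of one-parameter shapes, discarding two via an ASD obstruction, and then writing down explicit augmentations by $\bar\fk=\langle T\rangle$ with brackets such as $[T,X]=e_1-a_{33}T$, $[T,Y]=e_3-b_{11}T$.  These brackets land partly in $\langle T\rangle$, so $\ad_T$ does not even preserve~$\fg$; the resulting sixth symmetry is an isotropy vector field vanishing at the basepoint and cannot be realized as a left-invariant $X_\xi^L$ for any $\xi\in\fg$.

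For $\tau(\fa)\neq\fa$ your sketch is also incomplete.  When $\dim(\fa\cap\tau(\fa))=1$ the paper does not use an augmentation at all: it identifies $\fg$ with the $5$-dimensional Heisenberg algebra and computes $\cQ_4=0$ to conclude flatness ($15$ symmetries).  When $\dim(\fa\cap\tau(\fa))=2$ it first \emph{replaces} $\fa$ by another $3$-dimensional abelian ideal $\fb$ arranged so that $\fe\cap\fb\neq 0$, and only then produces an explicit augmentation; the normalizer search you propose in $\fa\cap\tau(\fa)$ is not what is actually done.

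In short: the outline is sound, but the substance of the proof---the explicit augmentations after normalizing $(A,B)$ in \S\ref{S:tau-stable}, the Heisenberg/$\cQ_4$ argument, and the ideal-replacement trick---is precisely what your proposal defers, and your left-invariant normalizer device is not a substitute for it.
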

  
 We split the proof according to $\fa \neq \tau(\fa)$ or $\fa = \tau(\fa)$.  Finally, we show that $\fa$ is self-centralizing.

 \subsection{The $\fa \neq \tau(\fa)$ case}
 \begin{prop}
 Let $\fg$ be a 5-dimensional complex Lie algebra with a 3-dimensional abelian ideal $\fa$, and  $(\fg;\fe,\ff)$ an ASD-ILC triple with an admissible anti-involution $\tau$.  Suppose that $\fa \neq \tau(\fa)$.  Then:
 \begin{enumerate}[(a)]
 \item $\dim(\fa \cap \tau(\fa)) = 1:$ we have $\dim(\ILCsym(\fg;\fe,\ff)) = 15$;
 \item $\dim(\fa \cap \tau(\fa)) = 2:$ we have $\dim(\ILCsym(\fg;\fe,\ff)) \geq 6$.
 \end{enumerate}
 \end{prop}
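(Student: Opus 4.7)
The plan is to extract strong structural consequences from having two distinct $3$-dimensional abelian ideals $\fa, \tau(\fa)$ inside $\fg$. Because each is simultaneously an ideal and abelian, $[\fa, \tau(\fa)] \subset \fa \cap \tau(\fa)$, and $\fa \cap \tau(\fa)$ is itself a $\tau$-invariant abelian ideal. Its dimension must be $1$ or $2$: the value $3$ gives $\fa = \tau(\fa)$, while $0$ is ruled out by $\dim \fa + \dim \tau(\fa) - \dim \fg \geq 1$. These give the two cases of the statement.

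For (a), $\fa + \tau(\fa) = \fg$ forces $[\fg,\fg] = [\fa, \tau(\fa)] \subset \fa \cap \tau(\fa)$, a $1$-dimensional subspace. Since this subspace centralises both $\fa$ and $\tau(\fa)$, it lies in $\cZ(\fg)$. A dimension count then pins down $\dim \cZ(\fg) = 1$: if it were at least $2$, then $\dim (C \cap \cZ(\fg)) \geq \dim C + \dim \cZ(\fg) - \dim \fg \geq 1$ would produce a nonzero element of $C$ annihilated by $\eta$, contradicting non-degeneracy. A quick check that the induced pairing $\fa/\cZ(\fg) \times \tau(\fa)/\cZ(\fg) \to \cZ(\fg)$ is non-degenerate then identifies $\fg$ as the $5$-dimensional Heisenberg algebra. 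Flatness of the ILC structure is then essentially forced: in any LC-adapted framing, $\bS = [\be_1 + t\be_2, t\bf_1 - \bf_2]$ lies in $[\fg,\fg] = \cZ(\fg)$, which is disjoint from $C$, so $\be^i(\bS) = \bf^j(\bS) = 0$. Hence $A_1 = A_2 = 0$ by \eqref{E:SA1A2-again} and $\cQ_4 \equiv 0$ by \eqref{E:Phi-ST}. Root type $\sfO$ and \eqref{E:RootTypeSym} give $\dim (\ILCsym(\fg;\fe,\ff)) = 15$.

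For (b), set $\fb := \fa \cap \tau(\fa)$, a $2$-dimensional $\tau$-stable abelian ideal. Write $\fa = \fb \oplus \langle x \rangle$, $\tau(\fa) = \fb \oplus \langle y \rangle$ with $y = \tau(x)$, and pick $z$ completing a basis that may be chosen with $\tau z \equiv \pm z \mod \fa + \tau(\fa)$. The whole bracket structure of $\fg$ is then encoded by $[x,y] \in \fb$ together with the adjoint actions $\ad_z|_\fa$, $\ad_z|_{\tau(\fa)}$ (conjugate under $\tau$) and $\ad_z|_\fb$. I would classify the admissible positions of $\fe$ and $\ff$ relative to the flag $\fb \subset \fa + \tau(\fa) \subset \fg$ — in particular via the dimensions $\dim(\fe \cap \fa)$ and $\dim(\fe \cap \fb)$, and their $\tau$-paired counterparts for $\ff$ — normalize using the automorphism group of $\fg$ (rescalings of $x, y, z$ together with the inner automorphisms from $\fb$ and the $\tau$-compatible $\GL(\fb)$-action), and then compute $\cQ_4$ from \eqref{E:Phi-ST} in each residual configuration. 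The bound \eqref{E:RootTypeSym} delivers $\dim(\ILCsym) \geq 6$ whenever the resulting root type is $\sfO, \sfN, \sfD,$ or $\mathsf{III}$; for any surviving type-$\mathsf{II}$ or type-$\mathsf{I}$ candidate, I would exhibit an explicit augmentation $(\bar\fg, \bar\fk; \bar\fe, \bar\ff)$ in the style of Example \ref{X:sl2r2} to produce the extra symmetry.

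The main obstacle is case (b): unlike (a), the isomorphism type of $\fg$ is not pinned down by the combinatorial data — continuous parameters enter through $[x,y] \in \fb$ and the spectrum of $\ad_z|_\fa$ — so the enumeration must be organized so that $\tau$-compatibility of the $z$-action collapses these parameters efficiently. The substantive work is arranging the case-split to yield a short list of configurations and, in each, either reading off a non-generic root type of $\cQ_4$ or constructing a concrete augmentation; the cleanness of (a) is due to the fact that non-degeneracy of $C$ alone forces both the isomorphism type of $\fg$ and the flatness of the structure, which is not available in (b).
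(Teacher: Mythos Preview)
Your treatment of part (a) is correct and essentially the same as the paper's: both identify $\fg$ with the $5$-dimensional Heisenberg algebra and read off $\cQ_4 \equiv 0$. Your flatness step (observing $\bS \in [\fg,\fg] = \langle T\rangle$ with $T \notin C$, hence $A_1=A_2=0$) is in fact a slightly slicker packaging of what the paper does via an explicit LC-adapted framing.

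For part (b), however, there is a genuine gap: you have a plan, not a proof, and the plan is substantially heavier than what is actually needed. The paper does \emph{not} classify the positions of $\fe,\ff$ relative to the flag $\fb \subset \fa + \tau(\fa) \subset \fg$, nor does it compute $\cQ_4$ at all in this branch. Instead it proves two structural facts about $\fn := \fa + \tau(\fa)$ --- namely $\dim(\fe\cap\fn)=\dim(\ff\cap\fn)=1$ and $\fe\cap\cZ(\fn)=\ff\cap\cZ(\fn)=0$ --- and then performs a key reduction you are missing: it replaces $\fa$ by the $3$-dimensional abelian ideal $\fb := (\fe\cap\fn) + \cZ(\fn)$, which still satisfies $\tau(\fb)\neq\fb$ but now enjoys $\fe\cap\fb\neq 0$. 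After this swap one may take $N_1 \in \fe$, $N_2 = \tau(N_1) \in \ff$, $N_3=[N_1,N_2]$, and the whole bracket table becomes so constrained that a \emph{single uniform augmentation} by $\bar\fk=\langle T\rangle$ with
\[
[T,N_1]=N_1,\quad [T,N_2]=-N_2,\quad [T,S]=[T,N_3]=[T,N_4]=0
\]
works in every case and yields $\dim(\ILCsym(\fg;\fe,\ff))\geq 6$ directly. No root-type analysis, no parametric case split, and no model-by-model augmentations are required. Your proposed route could in principle succeed, but it would involve a continuous-parameter enumeration whose termination you have not established; the paper's swap-then-augment argument bypasses all of that in a few lines.
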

 \begin{proof} Since $\fa$ and $\tau(\fa)$ are ideals in $\fg$, then so are $\fn:=\fa + \tau(\fa)$ and $\fa \cap \tau(\fa)$.  Note that 
 \begin{align} \label{E:nn}
 [\fn,\fn] = [\fa,\tau(\fa)] \subset \fa \cap \tau(\fa)  \subset \cZ(\fn).
 \end{align}
 \begin{enumerate}[(a)]
 \item We have $\dim(\fn) = 5$, so $\fn=\fg$.  Since $\dim(\fa \cap \tau(\fa)) = 1$ and $C=\fe\oplus\ff$ is non-degenerate, then \eqref{E:nn} implies $0 \neq [\fg,\fg] = \fa \cap \tau(\fa) = \langle T \rangle$ is transverse to $C$.  Since $\fe$ is a subalgebra, then $[\fe,\fe] \subset \fe \cap [\fg,\fg] = \fe \cap \langle T \rangle = 0$, so $\fe$ is abelian and similarly for $\ff$.  Letting $\{ \be_1, \be_2, \bf_1, \bf_2 \}$ be an LC-adapted framing, the only non-trivial brackets (after rescaling $T$ if necessary) are
 \begin{align}
 [\be_1, \bf_1] = T, \quad [\be_2, \bf_2] = T.
 \end{align}
 Thus, $\fg$ is isomorphic to the 5-dimensional Heisenberg Lie algebra.  By \eqref{E:Phi-ST}, we find that $\cQ_4 = 0$, so we have the flat ILC structure with 15-dimensional symmetry.
 \item Given $N_1 \in \fa$ with $N_1 \not\in \tau(\fa)$, define $N_2 := \tau(N_1) \in \tau(\fa)$, so $N_2 \not\in \fa$.  Since $\dim(C) = \dim(\fn) = 4$, then $\dim(C \cap \fn) \geq 3$, so $\fn$ must be non-abelian (by non-degeneracy of $C$) with $0 \neq N_3 := [N_1,N_2]$.  By \eqref{E:nn}, $N_3 \in \fa \cap \tau(\fa)$, so extend it to get $\fa \cap \tau(\fa) = \langle N_3, N_4 \rangle$.  Note that $\fn \cong \fn_3 \times \bbC$ and $\cZ(\fn) = \fa \cap \tau(\fa)$.  Since $\fn$ and $\cZ(\fn)$ are $\tau$-stable:
 \begin{itemize}
 \item $\dim(\fe \cap \fn) = 1$: Since $\dim(\fe) = 2$ and $\dim(\fn) = 4$, then $\dim(\fe \cap \fn) \geq 1$.  If $\fe \subset \fn$, then $\ff \subset \fn$, so $C \subset \fn$, which is impossible by non-degeneracy of $C$.

 \item $\fe \cap \cZ(\fn) = 0$: if $0 \neq \fe \cap \cZ(\fn)$, then $0 \neq \ff \cap \cZ(\fn)$, so $\dim(C \cap \cZ(\fn)) \geq 2$ since $\fe \cap \ff = 0$.  Since $\dim(\cZ(\fn)) = 2$, then $\cZ(\fn) \subset C$.  Since $\cZ(\fn)$ is an ideal in $\fg$, then $C$ cannot be non-degenerate.
 \end{itemize}
 Similarly, $\dim(\ff \cap \fn) = 1$ and $\ff \cap \cZ(\fn) = 0$.  
 
 \begin{framed}
 Summarizing, we have the following with $N_2 = \tau(N_1)$ and $N_3 = [N_1, N_2]$:
 \begin{align} \label{E:tau-a}
 \fa = \langle N_1, N_3, N_4 \rangle, \quad \tau(\fa) = \langle N_2, N_3, N_4 \rangle, \quad \cZ(\fn) = \fa \cap \tau(\fa) = \langle N_3, N_4 \rangle.
 \end{align}
Moreover, $\dim(\fe \cap \fn) = \dim(\ff \cap \fn) = 1$, with $\fe \cap \cZ(\fn) = \ff \cap \cZ(\fn) = 0$.
\end{framed}

Let us show that we can assume $\fe \cap \fa \ne 0$, possibly choosing a different 3-dimensional ideal $\fa$ satisfying the above properties.

Since $\dim(\fe \cap \fn) = 1$, write $\fe \cap \fn = \langle \widetilde{N_1} \rangle$ and define $\tilde\fa = \langle \widetilde{N_1}, N_3, N_4 \rangle$.  Since $\fe \cap \cZ(\fn) = 0$, we have $\widetilde{N_1} \not\in \cZ(\fn)$, so $\tilde\fa$ is a 3-dimensional abelian subalgebra, which is clearly an ideal in $\fn$.  Also, $\tau(\tilde\fa) \neq \tilde\fa$ since $\fe \cap \ff = 0$. Let $S \in \fe$ with $S\not\in \fn$, hence $\tau(S) \not\in \fn$ since $\fn$ is $\tau$-stable, and $\fg = \langle S \rangle \op \fn$.  Thus, $\fe = \langle S, \widetilde{N_1} \rangle$ and $\ff = \langle S + v, \widetilde{N_2} \rangle$ for some $v \in \fn$ and $\widetilde{N_2} := \tau(\widetilde{N_1}) \in \ff \cap \fn$.  (We may assume
$v$ has no $\widetilde{N_2}$ component, and redefining $S \mapsto S + c \widetilde{N_1}$, we may in addition assume that $v$ has no $\widetilde{N_1}$-component, i.e.\ $v\in \cZ(\fn)$.)
 Since $\fe$ is a subalgebra and $\fn$ is an ideal in $\fg$, then $[S,\widetilde{N_1}] \in \fe \cap \fn = \langle \widetilde{N_1} \rangle$, so $\tilde\fa$ is an ideal in $\fg$ with $\fe \cap \tilde\fa \neq 0$.  Now replacing $\fa$ with $\tilde\fa$, without loss of generality we can assume that $\fe \cap \fa \neq 0$, and 
 \begin{align}
	\fe = \langle S, N_1 \rangle, \quad
	\ff = \langle S + v, N_2 \rangle,
 \end{align}
where $\{N_1,N_2,N_3,N_4\}$ is a basis of $\fn$ satisfying~\eqref{E:tau-a} and $v\in\cZ(\fn)$.

Since $\fe$ and $\ff$ are subalgebras, and $\fn$ is an ideal, then $a_1 N_1 = [S,N_1]$ and $a_2 N_2 = [S+v,N_2] = [S,N_2]$. Thus,
 \begin{align}
 \begin{split}
 & [N_1, N_2] = N_3, \\
 & [S,N_1] = a_1 N_1, \quad [S,N_2]=a_2 N_2, \\
 & [S,N_3] = (a_1+a_2) N_3, \quad [S,N_4] = a_3 N_3 + a_4 N_4 \in \cZ(\fn).\\
 \end{split}
 \end{align}
 But now an augmentation of $(\fg;\fe,\ff)$ by $\bar\fk = \langle T \rangle$ is given by
 \begin{align}
 & [T,N_1] = N_1, \quad 
 [T,N_2] = -N_2, \quad
 [T,S] = [T,N_3] = [T,N_4] = 0.
 \end{align}
 Thus, $\dim(\ILCsym(\fg;\fe,\ff)) \geq 6$. \qedhere
   \end{enumerate}
 \end{proof}
 \subsection{The $\fa = \tau(\fa)$ case} 
 \label{S:tau-stable} 
  
 Throughout this subsection, we suppose that $\fe \cap \fa \neq 0$ and show that this leads to $\dim(\ILCsym(\fg;\fe,\ff)) \geq 6$.  If $\fe \subset \fa$, then since $\fa$ is $\tau$-stable, we also have $\ff \subset \fa$, hence $C=\fe \op \ff \subset \fa$, which is a contradiction.  Thus, we may assume $\dim(\fe\cap\fa) = 1$, and this implies $\dim(\ff\cap\fa) = 1$.
 Let $\{ X, Y, e_1, e_2, e_3 \}$ be a basis of $\fg$ such that:
 \begin{enumerate}[(i)]
 \item $\fa \cong \bbC^3$ has basis $\{ e_1, e_2, e_3 \}$;
 \item $\fe\cap \fa = \langle e_1 \rangle$ and $\ff\cap \fa = \langle e_3 \rangle$;
 \item $\fe\cap \fa + [\ff,\fe\cap \fa] = \langle e_1, e_2 \rangle$ and $\fe\cap \fa + [\ff,\fe\cap \fa] = \langle e_2, e_3 \rangle$;
 \item $\fe = \langle X, e_1 \rangle$ and $\ff = \langle Y, e_3 \rangle$.
 \end{enumerate}
  Let us clarify (iii). Since $C$ is non-degenerate and $\fa$ is abelian, then $0 \neq [Y,e_1] \mod C$ and so $\dim(\fe\cap \fa + [\ff,\fe\cap \fa]) = 2$.  Applying $\tau$ gives $\dim(\ff\cap \fa+ [\fe,\ff\cap \fa]) = 2$.  These 2-dimensional subspaces of $\fa$ must have 1-dimensional intersection, which we take to be $\langle e_2 \rangle \not\subset C$.

Let $A = \ad_X|_\fa$ and $B = \ad_Y|_\fa$ be represented in the basis $\{ e_1, e_2, e_3 \}$, so:\footnote{We have $i,j=1,2,3$ here and summation is implied over the repeated index $j$.}
 \begin{align} \label{E:XYa-brackets}
 [X,e_i] = A_{ji} e_j, \quad [Y,e_i] = B_{ji} e_j.
 \end{align}
 Note that $[X,e_1] \in \fe \cap \fa$ and $[Y,e_3] \in \ff \cap \fa$, while $[X,e_3] \in \langle e_2, e_3 \rangle$ and $[Y,e_1] \in \langle e_1, e_3 \rangle$ are non-trivial modulo $C$.  Rescaling $X$ and $Y$, we may assume:
 \begin{align} \label{E:AB-gen}
A =\begin{pmatrix} a_{11} & a_{12} & 0 \\ 0 & a_{22} & 1 \\ 0 & a_{32} & a_{33} \end{pmatrix}, \quad
B =\begin{pmatrix} b_{11} & b_{12} & 0 \\ 1 & b_{22} & 0 \\ 0 & b_{32} & b_{33} \end{pmatrix}.
\end{align}
We will exhibit augmentations of $(\fg;\fe,\ff)$ by $\bar\fk = \langle T \rangle$, thereby showing $\dim(\ILCsym(\fg;\fe,\ff)) \geq 6$.
 \subsubsection{$\fg/\fa$ is abelian}
 In this case $[A,B]=0$ and this forces
\begin{align} \label{E:AB}
A =\begin{pmatrix} a_{11} & 0 & 0 \\ 0 & a_{11} & 1 \\ 0 & 0 & a_{33} \end{pmatrix}, \quad
B =\begin{pmatrix} b_{11} & 0 & 0 \\ 1 & b_{33} & 0 \\ 0 & 0 & b_{33} \end{pmatrix}.
\end{align}
Aside from \eqref{E:XYa-brackets}, there is only the bracket $[X,Y] = c_i e_i$.  Define an augmentation of $(\fg;\fe,\ff)$ by $\bar\fk = \langle T \rangle$ (see Definition \ref{D:embedding}) with new (non-trivial) brackets
 \begin{align}
 [T,X]= e_1 - a_{33} T, \quad
 [T,Y]= e_3 - b_{11} T.
 \end{align}
 \subsubsection{$\fg/\fa$ is not abelian}
 We have $0 \not\equiv [X,Y] \equiv \alpha X + \beta Y \mod \fa$.  Requiring $Y \equiv \tau(X) \mod \fa$ forces $\beta = -\overline\alpha$, so necessarily $\alpha\neq 0$.  Rescaling $X$, we normalize $\alpha=1$, so $[X,Y] \equiv X-Y \mod \fa$.
Thus, $[A,B]=A-B$, and we get the following four cases:
 \begin{align}
 \begin{array}{ccccc}
 & A & B\\ \hline
 \mbox{(i)} & \begin{pmatrix} a & 0 & 0\\ 0 & a-1 & 1\\ 0 & 0 & a\end{pmatrix} &
 \begin{pmatrix} a & 0 & 0\\ 1 & a-1 & 0\\ 0 & 0 & a\end{pmatrix}\\
 \mbox{(ii)} & \begin{pmatrix} a+2 & 2 & 0 \\ 0 & a+1 & 1 \\ 0 & 0 & a \end{pmatrix} &
 \begin{pmatrix} a & 0 & 0 \\ 1 & a+1 & 0 \\ 0 & 2 & a+2 \end{pmatrix}\\
 \mbox{(iii)} & \begin{pmatrix} a+2 & 0 & 0\\ 0 & a+1 & 1\\ 0 & 0 & a\end{pmatrix} & 
 \begin{pmatrix} a+2 & 0 & 0\\ 1 & a & 0\\ 0 & 1 & a+1\end{pmatrix}\\
 \mbox{(iv)} & \begin{pmatrix} a+1 & 1 & 0 \\ 0 & a & 1 \\ 0 & 0 & a+2 \end{pmatrix} &
 \begin{pmatrix} a & 0 & 0 \\ 1 & a+1 & 0 \\ 0 & 0 & a+2 \end{pmatrix}
 \end{array}
 \end{align}
 Case (iii) (and similarly, (iv)) does not yield an ASD-ILC triple: the $\tau$-invariant subspace $\langle e_2 \rangle$ is $\ad(\fe)$-invariant, but not $\ad(\ff)$-invariant.  Thus, (iii) and (iv) may be discarded.  For both (i) and (ii), an augmentation of $(\fg;\fe,\ff)$ by $\bar\fk = \langle T \rangle$ is given by
 \begin{align}
 [T,X] = e_1 - (a+1) T, \quad [T,Y] = e_3 - (a+1) T.
 \end{align}

 \section{The non-tubular CR hypersurface with $\saff(2,\R)$-symmetry}
 \label{S:ns21}
 \subsection{Non-tubular and Levi-indefinite}
 
By Theorem \ref{T:NoAb}, there is a unique ASD-ILC triple $(\fg;\fe,\ff)$
on $\fg = \saff(2,\bbC) = \langle H, X, Y, v_1, v_2 \rangle$, see \eqref{E:NS2-EF}.  The fixed-point set of the unique admissible anti-involution $\tau$ from \eqref{E:saff-AI} has $\R$-basis
 \begin{align}
 iH, \quad X+Y, \quad i(X-Y), \quad v_1+v_2, \quad i(v_1 - v_2),
 \end{align}
 and spans $\fg_\bbR := \saff(2,\mathbb{R}) := \fsl(2,\bbR) \ltimes \bbR^2$.  It has 2-dimensional radical, so does not contain a 3-dimensional abelian subalgebra.  The associated CR structure is {\em non-tubular}.  (See Definition \ref{D:tubeCR}.)
 
 Recall that given a CR structure $(M,C,J)$, the complexification $C^\C$ splits into complementary $\pm i$-eigenspaces $C^{1,0}$ and $C^{0,1}$.  Its {\sl Levi form} $\cL$ is the hermitian form given by 
 \[
 \cL : (\xi,\eta) \mapsto [\xi,\overline{\eta}] \,\,\mod C^\C, \quad \forall\xi,\eta \in \Gamma(C^{0,1}).
 \]
 For the CR structure arising from an ASD-ILC triple $(\fg;\fe,\ff)$ and its fixed-point set under an admissible anti-involution, we identify $\fe$ and $\ff$ with $C^{1,0}$ and $C^{0,1}$ respectively, so $\cL$ becomes:
 \[
 \cL: (\xi,\eta) \mapsto [\xi,\tau(\eta)] \,\,\mod \fe \op \ff, \quad \forall \xi,\eta \in \ff.
 \]
 For $\fg = \saff(2,\C)$ with \eqref{E:NS2-EF} and \eqref{E:saff-AI}, take the basis $(\bf_1,\bf_2) = (H-v_2, Y)$, so $\cL$ has components
 \[
 \left(\begin{smallmatrix}
 \cL(\bf_1,\bf_1) & \cL(\bf_1,\bf_2) \\
 \cL(\bf_2,\bf_1) & \cL(\bf_2,\bf_2)
 \end{smallmatrix} \right)
 = \left( \begin{smallmatrix}
 [H-v_2,-H-v_1] & [H-v_2,X]\\
 [Y,-H-v_1] & [Y,X]
 \end{smallmatrix} \right)
 = \left( \begin{smallmatrix}
 v_2 - v_1 & 2X + v_1\\
 -2Y - v_2 & -H
 \end{smallmatrix} \right) 
 \equiv \left( \begin{smallmatrix}
 2H & -H\\
 -H & -H
 \end{smallmatrix} \right)  \mod \fe \op \ff.
 \]
 The coefficient matrix has negative determinant, so $\cL$ has {\em indefinite} signature.
 \subsection{A simple derivation of the model}
 \label{S:SAff2-coord}
 Take the standard action of $\fg = \saff(2,\bbC)$ on $\bbC^2$:
 \begin{align} \label{E:HXYv1v2}
 H = z_1\partial_{z_1} - z_2\partial_{z_2}, \quad X = z_1\partial_{z_2}, \quad Y = z_2\partial_{z_1}, \quad v_1 = \partial_{z_1}, \quad v_2 = \partial_{z_2}.
 \end{align}
 Regarding $(z_1,z_2)$-space $\bbC^2$ as the zeroth jet space $J^0(\bbC,\bbC)$ and using the standard notion of {\sl prolongation} from jet calculus \cite[Thm.4.16]{Olver1995}, we prolong \eqref{E:HXYv1v2} to the first jet space $J^1(\bbC,\bbC)$, i.e.\ $(z_1,z_2,w := z_2')$-space.  Furthermore, induce the joint action on two copies of $J^1(\bbC,\bbC)$, i.e. $(z_1,z_2,w,a_1,a_2,c)$-space.  Using the same vector field labels for their corresponding lifts, we obtain:
 \begin{align} \label{E:saff2-vf}
 \begin{split}
 H &= z_1\partial_{z_1} - z_2\partial_{z_2} - 2w\partial_w + a_1\partial_{a_1} - a_2\partial_{a_2} - 2c\partial_c, \\
 X &= z_1\partial_{z_2} + \partial_w + a_1\partial_{a_2} + \partial_c, \\
 Y &= z_2\partial_{z_1} - w^2\partial_w + a_2\partial_{a_1} - c^2\partial_c, \\
 v_1 &= \partial_{z_1} + \partial_{a_1}, \\
 v_2 &= \partial_{z_2} + \partial_{a_2}.
 \end{split}
 \end{align}
 This prolonged $\fg$-action admits the joint differential invariant (on $w \neq c$):
 \begin{align} \label{E:area}
 \cA := \frac{(z_2 - a_2 - w(z_1-a_1))(z_2 - a_2 - c(z_1-a_1))}{2(w-c)}.
 \end{align}
  
 Consider the complex hypersurfaces $\cA = \lambda$, where $\lambda \in \bbC^\times$. 
Rescalings $(z_1,z_2,w,a_1,a_2,c) \mapsto (\mu z_1, \mu z_2, w,\mu a_1,\mu a_2, c)$ for $\mu \in \C^\times$ allow us to normalize $\lambda$ to $i$ (or any nonzero constant).  Now intersect this hypersurface with the fixed-point set of the anti-involution $(z_1,z_2,w,a_1,a_2,c) \stackrel{\tau}{\mapsto} (\overline{a}_1,\overline{a}_2,\overline{c},\overline{z}_1,\overline{z}_2,\overline{w})$.  This yields an $\saff(2,\bbR)$-invariant CR hypersurface $M^5 \subset \C^3$:
 \begin{align} \label{E:saff2-CR-model}
 w - \overline{w} = -\tfrac{i}{2} (z_2 - \overline{z}_2 - w(z_1 - \overline{z}_1))(z_2 - \overline{z}_2 - \overline{w} (z_1 - \overline{z}_1)),
 \end{align}
 which is the same as \eqref{E:dmt-saff2}.  Explicitly, $\hol(M) \cong \saff(2,\bbR)$ is spanned (as a real Lie algebra) by:
 \begin{align} \label{E:CR-saff2}
 z_1\partial_{z_1} - z_2\partial_{z_2} - 2 w\partial_w, \quad
 z_1\partial_{z_2} + \partial_w, \quad
 z_2\partial_{z_1} - w^2\partial_w, \quad
 \partial_{z_1},\quad
 \partial_{z_2}.
 \end{align}
 (Namely, restrict \eqref{E:saff2-vf} to the fixed-point set of $\tau$ and project to their holomorphic parts.)
 \subsection{An equivalence of models}
 \label{S:Equiv-Loboda}
 On $\bbC^3$, take coordinates $(z_1, z_2, w) = 
(x_1 + iy_1, x_2 + iy_2, u+iv)$.  In this notation, our model \eqref{E:saff2-CR-model} becomes:
\begin{align} \label{E:saff-real}
M_{\mathfrak{saff}}: \qquad
 0 = -v + v^2 y_1^2 + (y_2- y_1 u)^2.
\end{align}
 Under the global biholomorphism of $\bbC^3$ given by 
 \begin{align} \label{E:Lob-bihol}
 (\widetilde{z_1},\widetilde{z_2},\widetilde{w}) = (w,z_1,-z_2+z_1 w),
 \end{align}
 our model in \eqref{E:saff-real} becomes (after dropping tildes):
\begin{align}
M_{\mathsf{Lob}}: \qquad
0 = -y_1 + y_1^2y_2^2 + \left(v-x_2y_1 \right)^2,
\end{align}
which was given in~\cite[pg.50]{Loboda-2020}.  The symmetry algebra of $M_{\mathsf{Lob}}$ was asserted to be 5-dimensional, but the symmetry vector fields for $M_{\mathsf{Lob}}$ were not stated in that work.  Pushing forward our symmetries from \eqref{E:CR-saff2} using \eqref{E:Lob-bihol}, we arrive at the symmetries of $M_{\mathsf{Lob}}$:
 \begin{align}
 \partial_{z_1}, \quad 
 \partial_w, \quad 
 \partial_{z_2} + z_1\partial_{w}, \quad 
  2 z_1\partial_{z_1} - z_2\partial_{z_2} + w\partial_w, \quad
 z_1^2\partial_{z_1} + (w-z_1z_2)\partial_{z_2} + wz_1\partial_w.
 \end{align}
 
 \begin{remark} Using the Levi determinant, we find that our model $M_\saff$ has 4-dimensional Levi degeneracy locus $\{ y_2 - u y_1 = 0, \, v=0 \}$, while that for $M_{\mathsf{Lob}}$ is $\{ y_1=0, \, v=0 \}$.  These loci are mapped to each other under \eqref{E:Lob-bihol}.
 \end{remark}
 \subsection{Related equi-affine geometry} 
 Restricting to the {\em real} setting, we can uncover the geometric meaning of the invariant \eqref{E:area}.  For $(x,y,u,a,b,c) \in \bbR^6 \simeq_{\rm loc} J^1(\R,\R) \times J^1(\R,\R)$, define
 \begin{align}
 \cA = \frac{(y - b - u(x-a))(y - b - c(x-a))}{2(u-c)}.
 \end{align}
 We now give two lovely interpretations for $\cA$.  These are phrased in terms of classical geometric constructions for which invariance under the {\sl planar equi-affine} group $\SAff(2,\R) := \SL(2,\R) \ltimes \R^2$ is manifest, since this group preserves areas and maps lines to lines.
 
 First, fixing $(x,y,u,a,b,c) \in \bbR^6$, consider in $\bbR^2$ the line $L_1$ through the point $(x,y)$ with slope $u$, and the line $L_2$ through $(a,b)$ with slope $c$.  If $u \neq c$, these lines intersect at a unique point $(s,t)$.  Adjoining a third line $L_3$ passing through (distinct) points $(x,y)$ and $(a,b)$ then determines a triangle, and it is a simple exercise to verify that $|\cA|$ is its area.

\begin{figure}[H]
\includegraphics[width=0.8\textwidth]{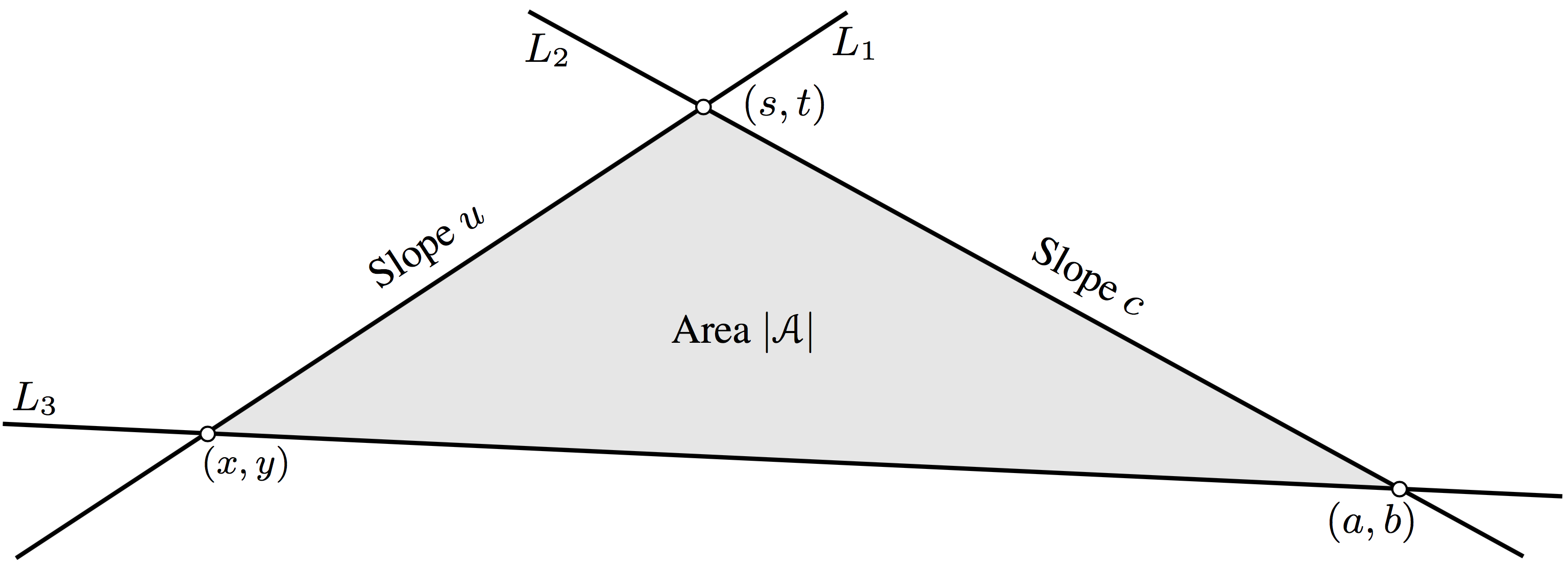}
\caption{Geometric construction of the joint invariant $\mathcal{A}$.}
\end{figure} 

For the second interpretation, let us first recall a classical construction.  Fix $p_0 \in \R^2$ and a line $L_0$ through $p_0$.  Given any line $L$ through $p_0$ that is transverse to $L_0$, consider a hyperbola $\cH$ having asymptotes $L_0$ and $L$.  For any point $p \in \cH$, we can form the:
 \begin{itemize}
 \item {\sl asymptotes-parallelogram} with vertices $p$ and $p_0$ and sides parallel to $L$ and $L_0$.
 \item {\sl tangent-asymptotes-triangle} whose vertices are $p_0$ and the intersection points of tangent line to $\cH$ at $p$ with the asymptotes $L$ and $L_0$.
 \end{itemize}
 Two well-known facts from classical geometry about this construction are:
 \begin{itemize}
 \item One of the diagonals of the asymptotes parallelogram (the one not passing through $p$ and $p_0$) is itself parallel to the tangent line to $\cH$ at $p$.
 \item The area of the asymptotes-parallelogram, which we denote by $\Area(\cH)$, is half that of the tangent-asymptotes-triangle.  Moreover, these areas are {\em constant} for any choice of $p \in \cH$.
 \end{itemize}
 This gives a natural equi-affinely invariant construction: Fix $\cA$ and fix $(a,b,c) \in J^1(\R,\R)$.  The latter determines a point $p_0 := (a,b) \in \R^2$ and line $L_0$ with slope $c$, and we consider the family of all hyperbolas $\cH$ having $L_0$ as one asymptote and having $\Area(\cH) = |\cA|$.  This gives a local foliation of (an open subset of) the plane, as the example below illustrates.  The collection of all such foliations is $\SAff(2,\R)$-invariant.

 \begin{ex} Fix $\cA$.   When $(a,b,c)=(0,0,0)$, solving \eqref{E:area} for $u = y'$ gives the ODE $ y' = \frac{y^2}{xy+2\cA}$.  Rewrite this as $0 = \frac{dx}{y} - \frac{xy+2\cA}{y^3} dy = \frac{dx}{y} - \frac{x}{y^2} dy + \frac{2\cA}{y^3}$, with general solution $\frac{x}{y} + \frac{\cA}{y^2} = \mu \in \R$.  Rearranging gives $y(\mu y-x) = \cA$, which are hyperbolas $\cH_\mu$ with asymptotes $y=0$ and $y=\frac{x}{\mu}$.  A simple exercise shows that $\Area(\cH_\mu) = |\cA|$, independent of $\mu$.
 \end{ex}
 
 \begin{figure}[H]
 \includegraphics[width=0.9\textwidth]{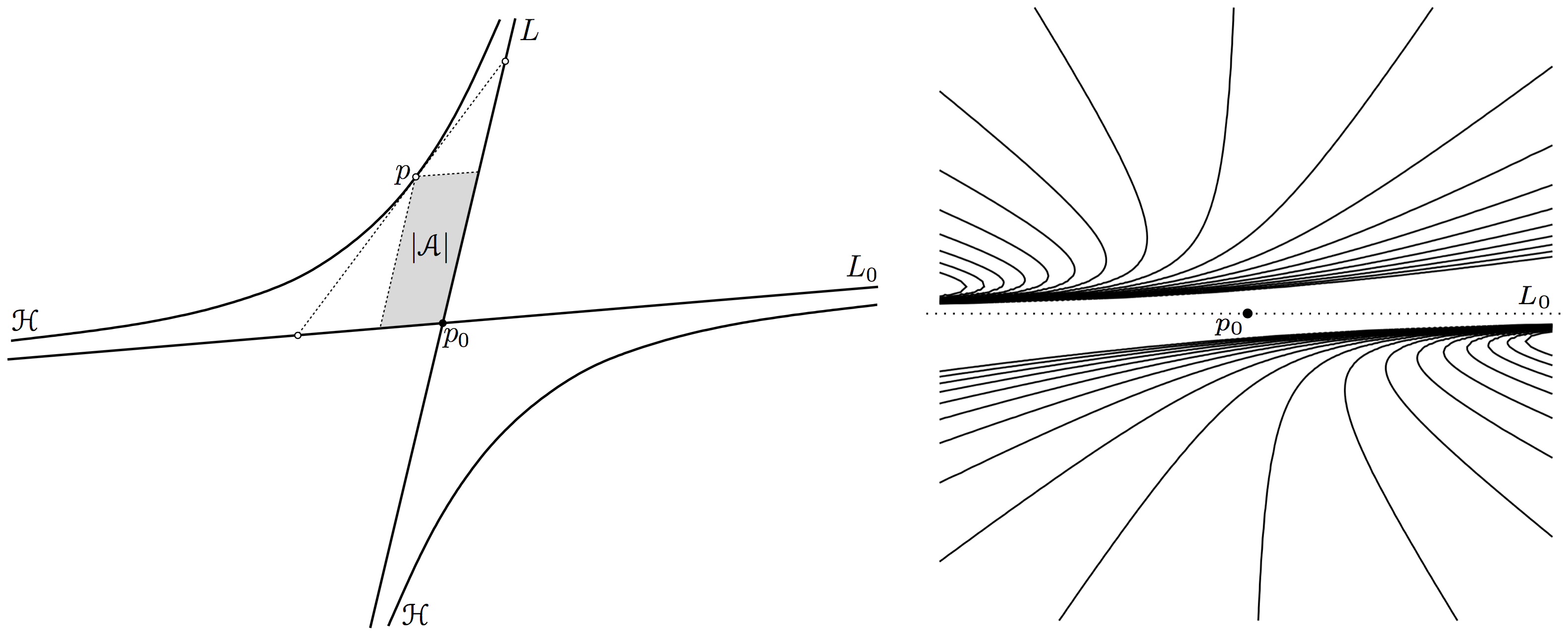}
 \caption{Asymptotes-parallelogram and a foliation by hyperbolas with constant area}
 \end{figure}
 \subsection{Related PDE realization} 
 Let us now describe the compatible, complete system of 2nd order PDEs (\S\ref{S:approach}) that corresponds to the ASD-ILC structure \eqref{E:NS2-EF} with symmetry $\fg = \saff(2,\bbC)$. In other words, we are looking for the equations whose complete solution $w(z_1,z_2)$ is defined by~\eqref{E:saff2-CR-model}. 
By definition, this system of PDEs admits the 5-dimensional Lie algebra of point symmetries \eqref{E:CR-saff2}, which coincides with the lift of $\fg$ to $J^1(\bbC,\bbC)$ as defined in~\S\ref{S:SAff2-coord}.  We identify here $J^1(\bbC,\bbC)$ with $\bbC^3=J^0(\bbC^2,\bbC)$ equipped with coordinates $(z_1,z_2,w)$ and then further prolong $\fg$ to $J^2(\bbC^2,\bbC)$ to determine all $\fg$-invariant complete systems of 2nd order on $w(z_1,z_2)$.

All such systems were computed in the PhD thesis of Hillgarter~\cite{Hillgarter}.  The $\fg$-action lifted to $J^2(\bbC^2,\bbC)$ admits the following three absolute invariants (see p.83 (${\textbf{ip}_{13}}$) and \S4.2.1 of~\cite{Hillgarter}):
\begin{align*}
	I_1 &= \frac{w_{1}^2w_{22}+w_2^2w_{11}-2w_1w_2w_{12}}{(w_1+ww_2)^2},\\
	I_2 &= \frac{w_1w_{12}-w_2w_{11}+w(w_1w_{22}-w_2w_{12})}{(w_1+ww_2)^{5/3}},\\
	I_3 &= \frac{w_{11}+w^2w_{22}-2w_1w_2+2w(w_{12}-w_2^2)}{(w_1+ww_2)^{4/3}}.
\end{align*}
So, any system of 2nd order PDEs admitting point symmetry $\fg$ is (implicitly) given by:
\begin{equation}\label{E:invPDE}
	\{ I_1 = \alpha_1,\quad I_2=\alpha_2,\quad I_3 = \alpha_3 \},
\end{equation}
where $\alpha_i \in \C$.  We now classify those that are {\em compatible}, i.e.\ $E$ from \eqref{E:PDE-EF} is Frobenius-integrable.
 \begin{prop}
 All compatible, complete 2nd order PDE systems $w_{ij} = f_{ij}(z_k,w,w_\ell)$, $1 \leq i,j,k,\ell \leq 2$ that are invariant under \eqref{E:CR-saff2} are equivalent to one of:
 \begin{align}
 \begin{array}{|c|c|c|} \hline
 \begin{cases} 
 I_1 = i,\\
 I_2 = 3(4^{-1/3}) e^{-i\pi/3},\\
 I_3 = -3(4^{1/3}) e^{-i\pi/6}
 \end{cases} & 
 \begin{cases}
 w_{11} = \frac{w_1^2}{(w_2w+w_1)^{2/3}}+\frac{2 w_1^2 w_2}{w_2w+w_1}, \\
 w_{12} = \frac{w_1 w_2}{(w_2w+w_1)^{2/3}} + \frac{2 w_1 w_2^2}{w_2w+w_1}, \\
 w_{22} = \frac{w_2^2}{(w_2w+w_1)^{2/3}} +\frac{2 w_2^3}{w_2w+w_1}
 \end{cases} & 
 \begin{cases}
 w_{11} = \frac{2 w_1^2 w_2}{w_2w+w_1}, \\
 w_{12} = \frac{2 w_1 w_2^2}{w_2w+w_1}, \\
 w_{22} = \frac{2 w_2^3}{w_2w+w_1}
 \end{cases}\\ \hline
 \mbox{Type } \pI{} & \mbox{Type } \pII{} & \mbox{Type } \pIII{} \\ \hline
 \mbox{ASD} & \mbox{not ASD} & \mbox{not ASD}\\ \hline
 \end{array}
 \end{align}
 For the type $\pI{}$ and $\pII{}$ systems above, \eqref{E:CR-saff2} is the full point symmetry algebra, while the type $\pIII{}$ system admits the additional point symmetry $z_1\partial_{z_1} + z_2\partial_{z_2}$ and full point symmetry algebra $\mathfrak{aff}(2,\C)$.
 \end{prop}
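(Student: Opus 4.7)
The plan has three phases, and I expect the second to be the main computational obstacle.

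\emph{Phase 1: parametrize and solve.} By \eqref{E:invPDE}, any $\fg$-invariant complete 2nd order system on $w(z_1,z_2)$ is cut out by $\{\,I_1=\alpha_1,\ I_2=\alpha_2,\ I_3=\alpha_3\,\}$ for some constants $\alpha_j\in\bbC$. The key algebraic observation I would exploit is that each numerator of $I_j$ is \emph{linear} in $(w_{11},w_{12},w_{22})$, so solving for the second jets amounts to inverting a $3\times 3$ matrix whose determinant is a nonzero power of $(w_1+ww_2)$. This yields explicit expressions $w_{ij}=f_{ij}(w,w_\ell;\alpha_1,\alpha_2,\alpha_3)$ that are automatically independent of $z_1,z_2$ (since $\partial_{z_k}\in\fg$).

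\emph{Phase 2: impose compatibility.} The system $w_{ij}=f_{ij}$ is compatible, equivalently $E$ from \eqref{E:PDE-EF} is Frobenius-integrable, iff $D_2 f_{11}=D_1 f_{12}$ and $D_2 f_{12}=D_1 f_{22}$, for the total-derivative operators $D_k=\partial_{z_k}+w_k\partial_w+f_{k\ell}\partial_{w_\ell}$. Substitution gives rational identities in $(w,w_1,w_2)$. To eliminate the fractional powers $2/3, 5/3, 4/3$ occurring in $I_1,I_2,I_3$, I would introduce $T:=(w_1+ww_2)^{1/3}$ as a new indeterminate and work polynomially in $T$. Equating coefficients of distinct monomials in $(T,w,w_1,w_2)$ then produces a polynomial system in $(\alpha_1,\alpha_2,\alpha_3)\in\bbC^3$ whose solution set, one finds, consists of exactly the three isolated triples tabulated.

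\emph{Phase 3: classify each compatible system.} Compute $\cQ_4$ via Corollary~\ref{C:SILC-5} to read off the root type ($\pI{}$, $\pII{}$, $\pIII{}$, respectively). ASD holds for the type $\pI{}$ system by Theorem~\ref{T:NoAb}, since by construction it is the PDE realization on $J^2(\bbC^2,\bbC)$ of the unique ASD-ILC triple on $\saff(2,\bbC)$ associated with the CR model \eqref{E:saff2-CR-model}. ASD \emph{fails} for types $\pII{}$ and $\pIII{}$ by a clean root-type argument: by Proposition~\ref{P:saff} every ASD-ILC triple on $\saff(2,\bbC)$ is $\Aut(\fg)$-equivalent to \eqref{E:NS2-EF}, whose quartic has root type $\pI{}$ (Example~\ref{X:saff}), so the other root types are incompatible with ASD. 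The symmetry-dimension upper bounds in \eqref{E:RootTypeSym} force $5=\dim\fg$ for root types $\pI{}$ and $\pII{}$, so \eqref{E:CR-saff2} is the full algebra; for root type $\pIII{}$ the bound is $6$, and I would verify directly that $z_1\partial_{z_1}+z_2\partial_{z_2}$ prolongs to a point symmetry (preserving each $I_j=0$ since each $I_j$ carries a nonzero scaling weight in $z$), then match the resulting 6-dimensional algebra to $\mathfrak{aff}(2,\bbC)=\gl(2,\bbC)\ltimes\bbC^2$ by structure constants.

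\emph{Main obstacle.} The hardest step is Phase 2: after clearing a denominator (a suitable power of $T$), the polynomial compatibility identities have high degree in $T$ with coefficients polynomial in $(\alpha_1,\alpha_2,\alpha_3)$, and the elimination to finitely many solutions in $\bbC^3$ is most safely carried out with computer algebra, in the spirit of Hillgarter's thesis. All subsequent checks in Phase 3 are routine given Corollary~\ref{C:SILC-5}, Theorem~\ref{T:NoAb}, Proposition~\ref{P:saff}, and the sharp upper bounds in \eqref{E:RootTypeSym}.
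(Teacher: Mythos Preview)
Your overall strategy matches the paper's, but Phase~2 contains a genuine gap. The compatibility conditions do \emph{not} cut out three isolated triples in $\bbC^3$; working out the integrability constraints one finds the two polynomial relations $3\alpha_1\alpha_3=4\alpha_2^2$ and $9\alpha_1=\alpha_2\alpha_3$, whose solution locus is a curve: the branch $(\alpha_1,\alpha_2,\alpha_3)=(0,0,\alpha)$ together with the twisted cubic $(\alpha_1,\alpha_2,\alpha_3)=(\tfrac{\alpha^3}{108},\tfrac{\alpha^2}{12},\alpha)$, $\alpha\in\bbC$. So compatibility alone leaves a one-parameter family in each branch, not isolated points.

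The missing ingredient is a normalization by an \emph{external} equivalence not lying in $\fg$. The rescaling $(z_1,z_2,w)\mapsto(\lambda z_1,\lambda z_2,w)$ is a point transformation that induces $(I_1,I_2,I_3)\mapsto(\lambda^{-2}I_1,\lambda^{-4/3}I_2,\lambda^{-2/3}I_3)$; it reduces the branch $(0,0,\alpha)$ with $\alpha\neq 0$ to $(0,0,1)$ (your type~$\pII{}$ system) and the twisted cubic with $\alpha\neq 0$ to a single representative (your type~$\pI{}$ system), while $(0,0,0)$ gives type~$\pIII{}$. In particular, without this step you cannot single out the specific values $I_1=i$, etc., in the first column: those arise only after fixing $\cA=i$ in \eqref{E:area} (equivalently, evaluating the invariants on the Segre family and then rescaling). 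Your Phase~3 is otherwise sound and agrees with the paper's argument.
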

 \begin{proof} Solving \eqref{E:invPDE} for $w_{ij}$, we find that \eqref{E:invPDE} is compatible if and only if $3\alpha_1 \alpha_3 = 4\alpha_2^2$ and $9\alpha_1 = \alpha_2 \alpha_3$.  This admits the following solutions:
 \begin{enumerate}
 \item $(\alpha_1, \alpha_2, \alpha_3) = (0,0,\alpha)$:  If $\alpha=0$, we get the third system.  If $\alpha \neq 0$, we normalize it to $\alpha=1$ using the rescaling $(z_1,z_2,w) \mapsto (\lambda z_1, \lambda z_2, w)$, which induces $(I_1,I_2,I_3) \mapsto (\lambda^{-2} I_1, \lambda^{-4/3} I_2, \lambda^{-2/3} I_3)$.  This gives the second system.
 \item $(\alpha_1,\alpha_2,\alpha_3) = (\frac{\alpha^3}{108}, \frac{\alpha^2}{12}, \alpha)$: Evaluating $I_1,I_2,I_3$ on the functions $w(z_1,z_2)$ defined by~\eqref{E:area}, we find that $\alpha = -3(\frac{4}{\cA})^{1/3}$.  (As expected, this does not depend on the parameters $(a_1,a_2,c)$, but only on $\cA$.)  Rescaling as above, we normalize $\cA = i$, which gives the first system.
 \end{enumerate}
Applying~\eqref{E:Q4}, we identify the root types of $\cQ_4$ as indicated.  For the type $\pI{}$ and $\pII{}$ cases, \eqref{E:RootTypeSym} confirms 5-dimensional symmetry, while there is the additional indicated symmetry for type $\pIII{}$ case.  (From \cite[Table 2]{ILC}, this is a realization of model $\pIII{.6-2}$.)  From Proposition \ref{P:saff} and Example \ref{X:saff}, an $\saff(2,\C)$-invariant ASD-ILC structure must be of type $\pI{}$.
 \end{proof}

 The type $\pI{}$ realization above is the desired PDE system with associated CR hypersurface \eqref{E:saff2-CR-model}.

 \section{Simply-transitive tubular hypersurfaces}
 \label{S:tubular}
 \subsection{From homogeneous tubes to algebraic data}
 Given a real affine hypersurface $\cS \subset \R^{n+1}$, we discussed in \S\ref{S:intro} its associated tubular CR hypersurface $M_\cS \subset \C^{n+1}$, and its complexification $M^c_\cS \subset \C^{n+1} \times \C^{n+1}$ is the associated {\sl tubular ILC hypersurface}.  (We recover $M_\cS$ as the fixed-point set
of the anti-involution $\tau(z,a) = (\overline{a},\overline{z})$ restricted to $M^c_\cS$.)  The symmetry algebra $\sym(M^c_\cS)$ is the complex Lie algebra consists of all holomorphic vector fields $X = \xi^k(z) \partial_{z_k} + \sigma^k(a) \partial_{a_k} \in \fX(\C^{n+1}) \times \fX(\C^{n+1})$ that are everywhere tangent to $M^c_\cS$.  The {\sl affine symmetry algebra} $\aff(\cS)$ consists of those {\sl affine vector fields} $\bS = (A_{k\ell} x_\ell + b_k) \partial_{x_k}$, for $A_{k\ell}, b_k \in \R$, that are everywhere tangent to $\cS$.  Any $\bS \in \aff(\cS)$ induces symmetries of $\bS^{\CR}$ of $M_\cS$ and $\bS^{\LC}$ of $M^c_\cS$ as indicated below.  We respectively denote the induced real and complex Lie algebras by $\aff(\cS)^{\CR} \subset \hol(M_\cS)$ and $\aff(\cS)^{\LC} \subset \sym(M^c_\cS)$, and it is clear that $\aff(\cS)^{\LC} \cong \aff(\cS)^{\CR} \otimes_\R \C \cong \aff(\cS) \otimes_\R \C$.
 
 \begin{figure}[H]
 \begin{center}
 \begin{tabular}{c}
\begin{tabular}{|c|} \hline
Real affine hypersurface\\ \hline
$\cS = \{ x : \cF(x) = 0 \} \subset \R^{n+1}$, \quad $d\cF \neq 0$ on $\cS$;\\
Real affine symmetry $\bS = (A_{k\ell} x_\ell + b_k) \partial_{x_k} \in \aff(\cS)$\\ \hline
\end{tabular}\\
\\
\begin{tabular}{ccc}
\begin{tabular}{|c|c|} \hline
Tubular CR hypersurface\\ \hline
$M_\cS = \{ z : \cF(\Re z) = 0 \} \subset \C^{n+1}$;\\ 
 $i\partial_{z_1},...,i\partial_{z_{n+1}} \in \hol(M_\cS)$, \\
 $\bS^{\CR} := (A_{k\ell} z_\ell + b_k) \partial_{z_k} \in \aff(\cS)^{\operatorname{cr}}$\\ \hline
\end{tabular} &
\begin{tabular}{|c|} \hline
Tubular ILC hypersurface\\ \hline
$M^c_\cS = \{ (z,a) : \cF(\frac{z+a}{2}) = 0 \} \subset \C^{n+1} \times \C^{n+1}$;\\ 
 $\partial_{z_1} - \partial_{a_1},..., \partial_{z_{n+1}} - \partial_{a_{n+1}} \in \sym(M^c_\cS)$, \\
 $\bS^{\operatorname{lc}} := (A_{k\ell} z_\ell + b_k) \partial_{z_k} + (A_{k\ell} a_\ell + b_k) \partial_{a_k} \in \aff(\cS)^{\LC}$\\ \hline
\end{tabular}
\end{tabular}
\end{tabular}
 \end{center}
 \end{figure}

 \begin{remark} \label{R:CAff-hyp}
 Any {\em complex} affine hypersurface $\cS \subset \C^{n+1}$ also induces a tubular ILC hypersurface $M^c_\cS \subset \C^{n+1} \times \C^{n+1}$ via the same prescription above.
 \end{remark}

For $M^c_\cS$, note that $\fa = \langle \partial_{z_1} - \partial_{a_1}, \ldots, \partial_{z_{n+1}} - \partial_{a_{n+1}} \rangle$ is an $(n+1)$-dimensional abelian Lie algebra $\fa \subset \fg := \sym(M^c_\cS)$ that is transverse to $E$ and $F$ (as defined in \S\ref{S:approach}), so we are naturally led to the following algebraic data for any holomorphically homogeneous tube:

\begin{defn} \label{D:tubeCR}
	A {\em tubular CR realization} for an ILC quadruple $(\fg,\fk;\fe,\ff)$ in dimension $\dim(\fg / \fk) = 2n+1$ is a pair $(\fa,\tau)$, where
	\begin{enumerate}
		\item[(T.1)] $\fa \subset \fg$ is an $(n+1)$-dimensional abelian subalgebra;
		\item[(T.2)] $\fe \cap \fa = \ff \cap \fa = 0$.
		\item[(T.3)] $\tau$ is an admissible anti-involution of $(\fg,\fk;\fe,\ff)$ that preserves $\fa$.
	\end{enumerate}
\end{defn}

 Conversely, given such data as above, we integrate $(\fg,\fk)$ to a (local) homogeneous space $N = G/K$ with $G$-invariant distributions $E,F$.  Since $C = E \op F$ is non-degenerate, then all symmetries of the ILC structure $(N;E,F)$ are in 1-1 correspondence with their projection by $d\pi_1$ or $d\pi_2$.  (We refer to the double fibration \eqref{E:2-fib}.)  This implies that the direct product of $\pi_1$ and $\pi_2$ gives a local embedding $N \to N/E \times N/F$ (with codomain being locally $\C^{n+1} \times \C^{n+1}$).  As $\fa$ is abelian, we can identify it with $\C^{n+1}$, with the anti-involution $\tau$ acting on it as $w\mapsto -\bar{w}$ (in the standard basis $\bb$ on $\C^{n+1}$). Let $A\subset G$ be the corresponding subgroup, which can also be locally identified with $\bbC^{n+1}$ equipped with the same anti-involution. Due to (T.1) and (T.2) the action of $A$ on both $N/E$ and $N/F$ is (locally) simply transitive. So, we can identify both $N/E$ and $N/F$ with some open subsets of $\C^{n+1}$, on which we introduce local coordinates $z$ and $a$ relative to $\bb$ and $-\bb$ respectively.  Hence, $\fa = \langle \partial_{z_k} - \partial_{a_k} \rangle$.
 
 Since $\tau$ swaps $\ff$ and $\fe$, it extends to the direct product $N/E \times N/F$ as $\widetilde\tau(z,a)=(\bar a, \bar z)$.
 The embedding $N \inj N/E \times N/F \cong_{{\rm loc}} \C^{n+1} \times \C^{n+1}$ is given by a single complex analytic equation $\Phi(z,a) = 0$.  Invariance of $N$ under $\fa$ forces $N = \{ (z,a) : \cF((z+a)/2) = 0 \}$.  Finally, taking the slice of $\bbC^{n+1}\times\bbC^{n+1}$ defined as a fixed-point set of $\widetilde\tau$, we arrive at the tubular hypersurface $M_{\cS}=\{z\colon \cF(\Re z)=0\}\subset \bbC^{n+1}$, where $\cF$ is now real-valued.  It is a tube over the base $\cS = \{ x :\cF(x) = 0 \} \subset \R^{n+1}$.

 \begin{lem}
 $\fn(\fa) / \fa \cong \aff(\cS) \otimes_\R \C$.
 \end{lem}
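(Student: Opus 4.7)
Here is the plan. First, describe $\fn(\fa)$ explicitly using projectability of symmetries, then realise the quotient $\fn(\fa)/\fa$ as the complex affine symmetries of the complexified base $\cS^\bbC := \{y \in \bbC^{n+1} : \cF(y) = 0\}$, and finally identify these with $\aff(\cS) \otimes_\R \bbC$ via a real-analytic complexification argument.

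The starting point is the fact (recorded in \S\ref{S:approach}) that every $X \in \fg := \sym(M^c_\cS)$ has the projectable form $X = \xi^k(z)\partial_{z_k} + \sigma^k(a)\partial_{a_k}$. With $\fa = \langle \partial_{z_k} - \partial_{a_k} : 1 \leq k \leq n+1 \rangle$, a direct computation gives
\begin{align*}
[\partial_{z_j} - \partial_{a_j},\, X] \,=\, \partial_{z_j}(\xi^k)\, \partial_{z_k} \,-\, \partial_{a_j}(\sigma^k)\, \partial_{a_k}.
\end{align*}
Requiring this to lie in $\fa$ for every $j$ forces $\partial_{z_j}\xi^k = \partial_{a_j}\sigma^k =: A^k_{\,j}$ to be the \emph{same} complex constants, whence $\xi^k = A_{k\ell}z_\ell + b_k$ and $\sigma^k = A_{k\ell}a_\ell + \tilde b_k$. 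Setting $c_k := \tfrac{1}{2}(b_k + \tilde b_k)$, I will decompose any $X \in \fn(\fa)$ as $X = \widehat\bS^{\LC} + Z$, where
\begin{align*}
\widehat\bS^{\LC} := (A_{k\ell}z_\ell + c_k)\partial_{z_k} + (A_{k\ell}a_\ell + c_k)\partial_{a_k}, \qquad Z := \tfrac{1}{2}(b_k - \tilde b_k)(\partial_{z_k} - \partial_{a_k}) \in \fa.
\end{align*}

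Next, by the chain rule $X(\cF((z+a)/2)) = \widehat\bS(\cF)\big|_{y=(z+a)/2}$, where $\widehat\bS := (A_{k\ell}y_\ell + c_k)\partial_{y_k}$ is a complex affine vector field in the coordinate $y$. Since elements of $\fa$ annihilate the defining function $\cF((z+a)/2)$, tangency of $X$ to $M^c_\cS$ is equivalent to tangency of $\widehat\bS$ to $\cS^\bbC$. Letting $\aff(\cS^\bbC)$ denote the complex Lie algebra of such tangent affine vector fields, the assignment $\widehat\bS \mapsto \widehat\bS^{\LC} \bmod \fa$ then gives a well-defined surjective linear map $\aff(\cS^\bbC) \to \fn(\fa)/\fa$, which is injective since $\widehat\bS^{\LC} \in \fa$ immediately forces $A = 0$ and $c = 0$.

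The final step is to identify $\aff(\cS^\bbC) \cong \aff(\cS) \otimes_\R \bbC$. Tangency of $\widehat\bS$ to $\{\cF = 0\}$ is the relation $\widehat\bS(\cF) \equiv 0 \pmod{\cF}$, a linear system in $(A,c)$ whose coefficients are the (real) Taylor coefficients of $\cF$ at real points of $\cS$. Consequently the complex solution space is the complexification of the real one, namely $\aff(\cS)$. Composing the two isomorphisms then yields the lemma. I expect this last identification to be the most delicate point: a priori, complex affine tangencies to $\cS^\bbC$ could strictly contain the complexification of the real affine tangencies to $\cS$, but the real-analyticity of $\cF$ with real Taylor coefficients forces real and imaginary parts of any complex tangency identity to be separate real identities, ruling out such phantoms.
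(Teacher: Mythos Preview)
Your proof is correct and its skeleton matches the paper's: both compute that normalizing $\fa$ forces $X$ to have affine components with identical linear part, then absorb the mismatch in translational parts into $\fa$. The only divergence is in the final identification of the real form. The paper exploits the anti-involution $\tau(z,a)=(\bar a,\bar z)$: since $\fa$ is $d\tau$-stable so is $\fn(\fa)$, and decomposing into $\pm 1$ eigenspaces (over $\bbR$) one finds that the $(+1)$-part modulo $\fa$ consists exactly of those $\bS^{\LC}$ with real $(A,b)$, which are then forced into $\aff(\cS)$ by tangency, while the $(-1)$-part is $i$ times this. You instead route through the intermediate object $\aff(\cS^\bbC)$ and argue $\aff(\cS^\bbC)=\aff(\cS)\otimes_\bbR\bbC$ from the reality of the Taylor coefficients of $\cF$. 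These are equivalent: $d\tau$ acts on the affine data $(A,c)$ precisely by complex conjugation, so your real/imaginary splitting is the paper's eigenspace splitting in different clothing. The paper's version avoids introducing $\cS^\bbC$ and the identity-theorem subtlety you flag; yours makes the connection to the affine geometry of the base more explicit.
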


 \begin{proof}
  Clearly, $\tspan_\C\{ \bS^{\LC} : \bS \in \aff(\cS) \} \oplus \fa \subset \fn(\fa)$.  Conversely, if $X = \xi^k(z) \partial_{z_k} + \sigma^k(a) \partial_{a_k}$ normalizes $\fa = \langle \partial_{z_1} - \partial_{a_1}, \ldots, \partial_{z_{n+1}} - \partial_{a_{n+1}} \rangle$, then $X = (A_{k\ell} z_\ell + b_k) \partial_{z_k} + (A_{k\ell} a_\ell + c_k) \partial_{a_k}$ for some $A_{k\ell}, b_k, c_k \in \bbC$.    Adding $(\frac{c_k - b_k}{2})(\partial_{z_k} - \partial_{a_k}) \in \fa$, we may assume that $b_k = c_k$.  Since $\fa$ is stable under $d\tau$ (where $\tau(z,a) = (\bar{a},\bar{z})$), then so is $\fn(\fa)$.  Since $\tau^2 = \id$, we can decompose $\fn(\fa)$ into $\pm 1$ eigenspaces for $d\tau$.  Modulo $\fa$, the $+1$ eigenspace consists of $X = (A_{k\ell} z_\ell + b_k) \partial_{z_k} + (A_{k\ell} a_\ell + b_k) \partial_{a_k} \in \fn(\fa)$ with $A_{k\ell}, b_k \in \R$, hence $X = \bS^{\LC}$, where $\bS = (A_{k\ell} x_\ell + b_k) \partial_{x_k} \in \aff(\cS)$.  The $-1$ eigenspace consists of similar vector fields, but with $A_{k\ell},b_k \in i\R$.  Thus, $\fn(\fa) \equiv \tspan_\C\{ \bS^{\LC} : \bS \in \aff(\cS) \} \mod \fa$, which implies the claim.
 \end{proof}

\begin{cor} \label{C:tubes}
	Let $M^5 \subset \C^3$ be a holomorphically simply-transitive, Levi non-degenerate hypersurface  with $\hol(M)$ containing a 3-dimensional abelian ideal.  Then $M$ is a tube on an affinely simply-transitive base.
\end{cor}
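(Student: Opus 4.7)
The plan is to translate the CR hypotheses into Lie-algebraic ILC data, apply Theorem \ref{T:NTab} to extract a tubular CR realization, and then invoke the integration construction following Definition \ref{D:tubeCR}.

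First, since $M^5 \subset \bbC^3$ is Levi non-degenerate and holomorphically simply-transitive, $\dim_\bbR \hol(M) = 5$. Passing to the complexification $M^c$ with its ILC structure $(M^c;E,F)$ as in \S \ref{S:approach}, we obtain by \eqref{E:sym-dim} a 5-dimensional complex Lie algebra $\fg := \sym(M^c)$ together with an induced ASD-ILC triple $(\fg;\fe,\ff)$, whose admissible anti-involution $\tau$ has fixed-point set corresponding to $\hol(M)$. Any 3-dimensional abelian ideal of $\hol(M)$ complexifies to a 3-dimensional abelian ideal $\fa \subset \fg$, and being the complexification of a real ideal, $\fa$ is automatically $\tau$-stable.

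Since $\fa = \tau(\fa)$ and $\dim \ILCsym(\fg;\fe,\ff) = 5$, Theorem \ref{T:NTab} yields $\fe \cap \fa = \ff \cap \fa = 0$. Hence $(\fa, \tau)$ satisfies conditions (T.1)--(T.3) of Definition \ref{D:tubeCR}, making it a tubular CR realization of $(\fg,0;\fe,\ff)$.

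The integration construction given immediately after Definition \ref{D:tubeCR} then locally identifies $M$ with a tube $M_\cS \subset \bbC^3$ over some real affine hypersurface $\cS \subset \bbR^3$. To upgrade this to \emph{simply-transitive} affine homogeneity of $\cS$, invoke the preceding Lemma: it gives $\fn(\fa)/\fa \cong \aff(\cS) \otimes_\bbR \bbC$, while $\fa$ being an ideal forces $\fn(\fa) = \fg$, so $\dim_\bbR \aff(\cS) = \dim_\bbC(\fg/\fa) = 2 = \dim \cS$. Transitivity of $\aff(\cS)$ on $\cS$ descends from transitivity of $\hol(M)$ on $M \cong \cS \times i\bbR^3$ by projection to the real base (the imaginary translations $\langle i\partial_{z_k}\rangle$ act trivially on $\cS$), and simple-transitivity follows from the matched dimensions. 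The essential content has already been absorbed into Theorem \ref{T:NTab}; the only subtlety remaining here is the bookkeeping of (T.1)--(T.3) and the descent of transitivity to the base, neither of which presents a genuine obstacle.
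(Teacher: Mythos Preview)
Your proof is correct and follows essentially the same route as the paper: complexify to obtain the ASD-ILC triple $(\fg;\fe,\ff)$ with abelian ideal $\fa$, apply Theorem~\ref{T:NTab} to extract the tubular CR realization $(\fa,\tau)$, integrate via the construction after Definition~\ref{D:tubeCR}, and use the Lemma $\fn(\fa)/\fa \cong \aff(\cS)\otimes_\bbR\bbC$ together with $\fn(\fa)=\fg$ to conclude affine simple-transitivity of the base. Your observation that $\fa=\tau(\fa)$ is automatic (since $\fa$ is the complexification of a real ideal in the $\tau$-fixed subalgebra $\hol(M)$) is valid but not needed as a hypothesis for Theorem~\ref{T:NTab}, which delivers it as part of its conclusion anyway; otherwise the arguments coincide.
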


\begin{proof} By \eqref{E:sym-dim}, the induced ILC structure on $M^c$ is simply-transitive, so can be encoded by an ASD-ILC triple $(\fg;\fe,\ff)$, where $\fg = \sym(M^c) = \hol(M) \otimes_\R \C$ is 5-dimensional and admits some admissible anti-involution $\tau$.  By hypothesis, $\hol(M)$ contains a 3-dimensional abelian ideal, so there exists a 3-dimensional abelian ideal $\fa \subset \fg$.

	Applying Theorem \ref{T:NTab}, we get $\fa = \tau(\fa)$ and $\fe \cap \fa = \ff \cap \fa = 0$.  Thus, $(\fa,\tau)$ is a tubular CR realization for the ILC triple $(\fg;\fe,\ff)$.  Since $\fa$ is an ideal in $\fg$, then $\fg/\fa = \fn(\fa)/\fa \cong \aff(\cS) \otimes_\R \C$ for some base $\cS$ as constructed above.  As $\hol(M)$ is transitive on $M$, we see that the projection $\hol(M)$ onto $\cS$ is also transitive.  
Thus, $\cS$ is affinely simply-transitive.
\end{proof}
 
 Given $p \in \C^{n+1}$, there is a natural isomorphism of the Lie algebra of all (real or complex) affine vector fields with $\aff(n+1,\C) := \fgl(n+1,\C) \ltimes \C^{n+1}$, via
 \begin{align} \label{E:aff-Ab}
 (A_{k\ell} (z_\ell - p_\ell) + b_k) \partial_{z_k} \quad\mapsto\quad (A,b),
 \end{align}
 for which $A$ is the {\sl linear part at $p$}, and $b$ is the {\sl translational part}.  Recall that conjugation by $P \in \GL(n+1,\C) \subset \Aff(n+1,\C)$ induces the action $(A,b) \mapsto (PAP^{-1},Pb)$.  Finally, $\aff(n+1,\C)$ has a unique abelian ideal consisting of translations $\langle \partial_{x_k} \rangle \cong \C^{n+1}$.
	
 \begin{prop} \label{P:AffToILC} Let $\cS \subset \R^{n+1}$ be an affinely homogeneous hypersurface with non-degenerate 2nd fundamental form.  Then the tubular ILC hypersurface $M_\cS^c \subset \C^{n+1} \times \C^{n+1}$ is homogeneous and encoded by an ILC quadruple $(\fg,\fk;\fe,\ff)$, given for any $p \in \cS$ by
 \begin{align} \label{E:AffToILC}
 \fe := \aff(\cS) \otimes_\R \C, \quad \fg := \fe \ltimes \C^{n+1}, \quad \ff := \{ Y \in \fg : Y|_p = 0 \}, \quad \fk := \fe \cap \ff.
 \end{align}
 \end{prop}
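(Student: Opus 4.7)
The strategy is to produce a concrete model for the abstract algebraic data $(\fg,\fk;\fe,\ff)$ inside $\sym(M_\cS^c)$, verify that this model acts transitively, and match the geometric Legendrian distributions $E,F$ with the algebraic subspaces $\fe/\fk$ and $\ff/\fk$.  The concrete model is built from two well-known subalgebras of $\sym(M_\cS^c)$: the diagonal action $\aff(\cS)^{\LC}$ and the anti-diagonal translations $\fa := \langle\partial_{z_k}-\partial_{a_k}\rangle$.  The bracket $[\bS^{\LC},\partial_{z_k}-\partial_{a_k}] = -A_{jk}(\partial_{z_j}-\partial_{a_j})$ (where $A$ is the linear part of $\bS$) shows that $\fg' := \aff(\cS)^{\LC}+\fa$ is a Lie subalgebra with $\fa$ an abelian ideal.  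The non-degeneracy hypothesis on the second fundamental form of $\cS$ forbids $\aff(\cS)$ from containing nontrivial translations---else $\cS$ would be a cylinder---so the sum is direct, giving $\fg' \cong \aff(\cS)^{\C}\ltimes \C^{n+1}\cong \fg$.

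With this realization in hand, transitivity at the basepoint $(p,p)\in M_\cS^c$ is immediate from the decomposition $T_{(p,p)} M_\cS^c = \{(v,v):v\in T_p\cS^{\C}\} \oplus \{(w,-w):w\in\C^{n+1}\}$: the first summand is the image of $\aff(\cS)^{\LC}$ (by affine transitivity of $\aff(\cS)$ on $\cS$) and the second is the image of $\fa$.  The stabilizer of $(p,p)$ in $\fg'$ comes out to $\aff(\cS)_p^{\LC}$.  Since $E = \ker(d\pi_2)$ and $F = \ker(d\pi_1)$, the geometric Legendrian subalgebras of $\fg'$ (elements with vanishing $a$- or $z$-component at $(p,p)$) are
\[
\widehat\fe = \{\bS^{\LC}+\bS|_p\cdot\alpha : \bS\in\aff(\cS)^{\C}\}, \qquad
\widehat\ff = \{\bS^{\LC}-\bS|_p\cdot\alpha : \bS\in\aff(\cS)^{\C}\},
\]
with $\alpha := \partial_z-\partial_a$.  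That $\widehat\fe,\widehat\ff$ are Lie subalgebras with $\widehat\fe\cap\widehat\ff = \aff(\cS)_p^{\LC}$ reduces to the pointwise identity
\[
[\bS,\bS']\big|_p = A'\bS|_p - A\bS'|_p,
\]
valid for all affine $\bS,\bS'$ with linear parts $A,A'$, which is essentially the sole substantive algebraic input.

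It remains to identify the proposition's abstract data with the concrete quadruple $(\fg',\aff(\cS)_p^{\LC};\widehat\fe,\widehat\ff)$.  The plan is to introduce the \emph{twisted} linear map
\[
\phi : \fe\ltimes\C^{n+1} \to \fg', \qquad \phi(\bS,c) := \bS^{\LC} + (\bS|_p + 2c)\,\alpha,
\]
and to verify, using $[\bS^{\LC},\alpha]=-A\alpha$ together with the above identity, that $\phi$ is a Lie algebra isomorphism for the appropriate sign convention on the $\fe$-action on $\C^{n+1}$.  By construction $\phi(\fe)=\widehat\fe$; viewing $Y=(\bS,c)$ as the affine vector field $\bS+c\cdot\partial$ on $\C^{n+1}$ gives $Y|_p=\bS|_p+c$, so $\ff = \{(\bS,-\bS|_p):\bS\in\fe\}$ and $\phi(\ff)=\widehat\ff$; finally $\phi(\fk)=\widehat\fe\cap\widehat\ff$.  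The main obstacle is precisely this mismatch: the naive embedding $(\bS,c)\mapsto\bS^{\LC}+c\alpha$ would send $\fe$ to $\aff(\cS)^{\LC}$, whose value at $(p,p)$ is the diagonal $\{(v,v)\}$ rather than $E|_{(p,p)}=\{(v,0)\}$, and the twist encoded in $\phi$ is exactly what repairs this mismatch and produces the ILC quadruple encoding the $G$-invariant structure on $M_\cS^c$.
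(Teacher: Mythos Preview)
Your proof is correct and follows essentially the same route as the paper's. Both arguments realize $\fg$ concretely as $\fg' = \aff(\cS)^{\LC} \oplus \fa$ inside $\sym(M_\cS^c)$, identify the geometric Legendrian subalgebras $\widehat\fe = \{\bS^{\LC} + \bS|_p\cdot\alpha\}$ and $\widehat\ff = \{\bS^{\LC} - \bS|_p\cdot\alpha\}$ at the basepoint $(p,p)$, and then produce an isomorphism to the abstract data \eqref{E:AffToILC}. The paper obtains this isomorphism as the composite $d\cD \circ d\pi_1$ of the faithful projection onto the $z$-factor with a dilation by $\tfrac{1}{2}$ centred at $p$; your explicit twisted map $\phi(\bS,c) = \bS^{\LC} + (\bS|_p + 2c)\alpha$ is precisely the inverse of this composite (up to the harmless rescaling $\alpha_k \leftrightarrow \tfrac{1}{2}\partial_{z_k}$), and your key identity $[\bS,\bS']|_p = A'\bS|_p - A\bS'|_p$ is exactly what makes that composite a Lie algebra homomorphism.

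One minor remark: your argument that the non-degenerate second fundamental form forbids translations in $\aff(\cS)$ is correct, but not actually needed for the directness of the sum $\aff(\cS)^{\LC} + \fa$ (which holds unconditionally, since $\bS^{\LC}$ has equal $z$- and $a$-components while elements of $\fa$ have opposite ones). It does, however, guarantee that $\fg'$ literally has the semidirect product form $\fe \ltimes \C^{n+1}$ rather than something with extra redundancy, and the paper implicitly uses the same fact via the injectivity of $d\pi_1$ on $\sym(M_\cS^c)$.
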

	
 \begin{proof} Since $\cS$ is affinely homogeneous, then $M^c_\cS$ is homogeneous, with $\sym(M^c_\cS)$ containing\begin{align} \label{E:affS-ext}
\fg = \aff(\cS)^{\LC} \op \tspan_\bbC\{ \partial_{z_1} - \partial_{a_1}, \ldots, \partial_{z_{n+1}} - \partial_{a_{n+1}} \},
\end{align}
 which is transitive on $M^c_\cS$.  Given $p \in \cS$, we have $(p,p) \in M^c_\cS$ and
 \begin{align}
 \fe = \{ Y \in \fg : d\pi_2|_{(p,p)}(Y) = 0 \}, \quad
 \ff = \{ Y \in \fg : d\pi_1|_{(p,p)}(Y) = 0 \}, \quad \fk = \fe \cap \ff,
 \end{align}
 in terms of the double fibration \eqref{E:2-fib}.
 Explicitly, let $X := (A_{k\ell} x_\ell + b_k) \partial_{x_k} \in \aff(\cS)$ and $T_{X,p} := (A_{k\ell} p_\ell + b_k) (\partial_{z_k} - \partial_{a_k}) \in \sym(M_\cS^c)$, where $p = (p_1,\ldots,p_{n+1}) \in \cS$.  Consider
\begin{align}
X^{\LC} + T_{X,p} &= (A_{k\ell} (z_\ell + p_\ell) + 2b_k) \partial_{z_k} + A_{k\ell} (a_\ell - p_\ell) \partial_{a_k}, \label{E:XT1}\\
X^{\LC} - T_{X,p} &= A_{k\ell} (z_\ell - p_\ell) \partial_{z_k} + (A_{k\ell} (a_\ell + p_\ell) + 2b_k) \partial_{a_k}. \label{E:XT2}
\end{align}
 Clearly, $\fe = \tspan_\C\{ X^{\LC} + T_{X,p} : X \in \aff(\cS) \}$, while $\ff = \tspan_\C\{ X^{\LC} - T_{X,p} : X \in \aff(\cS) \}$. 
 
 Since $C = E \op F$ is non-degenerate, then all elements of $\sym(M^c)$ are in 1-1 correspondence with their projection by either $d\pi_1$ or $d\pi_2$.   Focusing on their $d\pi_1$ projections, it is clear that $(d\pi_1(\fg),d\pi_1(\ff))$ agree with $(\fg,\ff)$ in \eqref{E:AffToILC}.  Letting $\mathcal{D} \in \Aff(n+1,\C)$ be the dilation centered at $p$ by a factor $\half$, define $(\bar\fg,\bar\fk;\bar\fe,\bar\ff)$ be the (isomorphic) projection of $(\fg,\fk;\fe,\ff)$ by $d\cD \circ d\pi_1$.  Let us view this in terms of \eqref{E:aff-Ab}.  Letting $v = Ap+b$ and $D = \half\, \id$, \eqref{E:XT1} and \eqref{E:XT2} become:
 \begin{align}
 (A,2v) \mapsto (DAD^{-1},2Dv) = (A,v), \qquad
 (A,0) \mapsto (DAD^{-1},0) = (A,0).
 \end{align}
 Via \eqref{E:aff-Ab}, the former is $(A_{k\ell} z_\ell + b_k) \partial_{z_k}$.  Thus, after dropping bars, $(\bar\fg,\bar\fk;\bar\fe,\bar\ff)$ agrees with \eqref{E:AffToILC}.
 \end{proof}

 Note that $\ff \subset \fg$ is the {\sl isotropy subalgebra} at $p$.  Using \eqref{E:Phi-ST} and Proposition \ref{P:AffToILC}, the quartic $\cQ_4$ can be efficiently computed for tubes  $M^c_\cS$ over affinely simply-transitive $\cS$ (see Table \ref{F:tubular-LC-Q4}).
 
 \subsection{Tubes on affinely simply-transitive surfaces}
 \label{S:tubes}

We finally address the tubular simply-transitive Levi non-degenerate classification.  From our work above, these can all be described as tubes on an {\em affinely simply-transitive} base\footnote{Several holomorphically multiply-transitive tubes have base surface that is affinely {\em inhomogeneous} \cite[Tables 7 \& 8]{CRhom}.}.  For the latter, we will use the DKR classification~{\cite{AffHom}} of surfaces in {\em real} affine 3-space and proceed with the initial steps described in \S \ref{S:main-result}.

From the DKR list, we begin by excluding those surfaces whose associated tube already {\em explicitly} appears in the multiply-transitive classification \cite{CRhom}.  In Table \ref{F:MT-CR}, these known tubes are indicated with their ILC quartic types and symmetry dimensions, keeping in mind \eqref{E:sym-dim}.  (The additional hyphenated suffix, e.g. $\sfD$.6-1 and $\sfD$.6-2, indicates labelling of different families derived from \cite{ILC}.)  Finally, we restrict to affinely simply-transitive surfaces with non-degenerate Hessians.  This excludes quadrics, cylinders, and the Cayley surface $u = x_1 x_2 - \frac{x_1^3}{3}$.  (The last of these admits the affine symmetries $x_1\partial_{x_1} + 2x_2\partial_{x_2} + 3u\partial_u$, $\partial_{x_1} + x_1\partial_{x_2} + x_2\partial_u$, and $\partial_{x_2} + x_1\partial_u$.)

 \begin{footnotesize}
 \begin{table}[H]
 \[
 \begin{array}{|c|c|l|l|cccc} \hline
 \mbox{DKR label} & \mbox{Non-degenerate real affine surface} & \mbox{ILC Classification \cite{ILC}} \\ \hline\hline
 (3) & \begin{array}{c} 
 u = \ln(x_1) + \alpha \ln(x_2)\\
 {\scriptstyle(\alpha\neq 0)}
 \end{array} &
 \begin{tabular}{@{}l@{}} 
 \pD{.7}: $\alpha\neq 0,-1$;\\
 \pO{.15}: $\alpha=-1$
 \end{tabular} \\ \hline
 (4) & u = \alpha\arg(ix_1+x_2) + \ln(x_1^2+x_2^2) & \pD{.7}\\
 & u = \arg(ix_1 + x_2) & \pO{.15}\\ \hline
 (7) & u = x_2^2 + \epsilon e^{x_1} & \pO{.15} \\ \hline
 (8) & 
 \begin{array}{c} 
 u = x_2^2 + \epsilon x_1^\alpha\\
 {\scriptstyle(\alpha\neq 0,1)}
 \end{array} &
 \begin{tabular}{@{}l@{}}
 \pD{.6-2}: $\alpha\neq 0,1,2$;\\
 \pO{.15}: $\alpha=2$\\
 \end{tabular} \\ \hline
 (9) & u = x_2^2 + \epsilon \ln(x_1) & \pD{.7} \\\hline
 (10) & u = x_2^2 + \epsilon x_1\ln(x_1) & \pD{.6-2} \\\hline
 (11) & u = x_1 x_2 + e^{x_1} & \pN{.6-2} \\\hline
 (12) & u = x_1 x_2 + x_1^\alpha &
 \begin{tabular}{@{}l@{}}
 \pN{.6-1}: $\alpha \neq 0,1,2,3,4$;\\
 \pN{.8}: $\alpha =4$;\\
 \pO{.15}: $\alpha=0,1,2,3$
 \end{tabular} \\\hline
 (13) & u = x_1 x_2 + \ln(x_1) & \pN{.6-1} \\\hline
 (14) & u = x_1 x_2 + x_1\ln(x_1) & \pN{.7-2} \\\hline
 (15) & u = x_1 x_2 + x_1^2\ln(x_1) & \pN{.6-1} \\\hline
 (17) & x_1 u = x_2^2 + \epsilon x_1\ln(x_1) & \pD{.6-1} \\\hline
 \end{array}
 \]
 \caption{Affinely simply-transitive surfaces with holomorphically multiply-transitive associated tubes.  Parameters $\alpha \in \R$ and $\epsilon = \pm 1$.}
 \label{F:MT-CR}
 \end{table}
 \end{footnotesize}

 \begin{remark} \label{R:lim-case} Family (4) was originally stated in \cite{AffHom} as $u = \alpha\arg(ix_1+x_2) + \beta\ln(x_1^2+x_2^2)$.  Scaling $u$ yields the two cases in Table \ref{F:MT-CR}, the first of which explicitly appears in \cite{CRhom}.  
 The tube $M$ over $u = \arg(ix_1+x_2)$ is mapped to the hyperquadric $\Im \widetilde{w} = |\widetilde{z}_1|^2 - |\widetilde{z}_2|^2$ by
 \begin{align}
 (\widetilde{z}_1,\widetilde{z}_2,\widetilde{w}) = \left( 
 \frac{1}{\sqrt{2}} \Big( e^{iw} - \frac{z_1-iz_2}{4} \Big), 
 \frac{1}{\sqrt{2}} \Big( e^{iw} + \frac{z_1-iz_2}{4} \Big),
 e^{iw}\Big(\frac{iz_1 - z_2}{2}\Big)  \right).
 \end{align}
 Thus, $\dim\,\hol(M) = 15$ and $M$ is flat.  The above was derived from \cite[Thm.6.1(6) \& (6.69)]{Isaev2011}.
 
 Model (16) when $\alpha=0$ gives the quadric $x_1 u = x_2^2 + \epsilon$, with affine symmetries:
 $x_1 \partial_{x_1} - u \partial_u$,
 $2x_2\partial_{x_1} + u\partial_{x_2}$, and
 $x_1 \partial_{x_2} + 2 x_2 \partial_u$.
 Its associated tube admits $\fso(1,2) \ltimes \bbR^3$ symmetry. 
 \end{remark}
 
   All remaining surfaces\footnote{The enumeration (1), (2), (5), (6), (16), (18) from \cite{AffHom} has been re-enumerated as $\sfT1$--$\sfT6$ here.} are given in Table \ref{F:tubular-final}, and their affine symmetries $\bS,\bT$ are given in Table \ref{F:tubular-LC-Q4}.  The associated tubes $M$ admit symmetries $\bS^{\CR}, \bT^{\CR}, i\partial_{z_1}, i\partial_{z_2}, i\partial_w \in \hol(M)$, so $\dim\,\hol(M) \geq 5$.  In Table \ref{F:tubular-LC-Q4}, we compute $\cQ_4$ using \eqref{E:Phi-ST} and Proposition \ref{P:AffToILC}, and classify its root type.  (For details, we refer to a {\tt Maple} file in our {\tt arXiv} submission.)  By \eqref{E:RootTypeSym}, those of type $\pI{}$ and $\pII{}$ are confirmed to have $\dim\,\hol(M) = 5$, so only the type $\pD{}$ and $\pN{}$ cases remain.  We used two methods to computationally confirm that $\dim\,\hol(M) = 5$ for these remaining cases: (i) {\sl PDE point symmetries} (\S\ref{S:PDE-method}), and (ii) {\sl power series} (\S\ref{S:ps-method}).
  

\begin{landscape}\centering
\begin{footnotesize}
\begin{table}[H]
\centering
\[
 \begin{array}{|c|@{}c@{}|@{}l@{}|@{}c@{}|l|c|} \hline
 & \begin{tabular}{c}
 Generic point \\
 $(x_1,x_2,u)$\\
 on surface
 \end{tabular} & \begin{tabular}{l} Affine symmetries $\bS,\bT$, isotropy fields $\widetilde\bS,\widetilde\bT$,\\ and LC-adapted framing $\{ \be_1, \be_2, \bf_1, \bf_2 \}$\end{tabular} & \mbox{ILC Quartic } \cQ_4 & \mbox{Root type of $\cQ_4$}\\ \hline\hline
 \sfT1 & (1,1,1) &
 \begin{array}{l}
 \begin{array}{@{}l@{}}
 \bS = x_1 \partial_{x_1} + \alpha u \partial_u,\\
 \bT = x_2 \partial_{x_2} + \beta u \partial_u
 \end{array} \quad
 \begin{array}{@{}l@{}}
 \widetilde\bS = \bS - \partial_{x_1} - \alpha\partial_u, \\
 \widetilde\bT = \bT - \partial_{x_2} - \beta\partial_u
 \end{array}\\ \hline
 \begin{array}{@{}l}
 \be_1 = \alpha \bT, \quad
 \be_2 = \alpha\beta\bS - \alpha(\alpha-1) \bT,\\
 \bf_1 = \frac{\alpha+\beta-1}{\alpha} \widetilde\bS, \quad
 \bf_2 = \widetilde\bT - \frac{\beta-1}{\alpha} \widetilde\bS
 \end{array}
 \end{array} & 
 \begin{array}{l}
 (\alpha-1)(\alpha+2\beta-1)(\alpha+\beta-1) t^4\\
 \quad + 4(\alpha-1)(\beta-1)(\alpha+\beta-1) t^3\\
 \quad + 2(\alpha-1)(\beta-1)(\alpha+\beta-3) t^2\\
 \quad - 4(\alpha-1)(\beta-1) t\\
 \quad + \frac{(\beta-1)(\beta+2\alpha-1)}{\alpha+\beta-1}
 \end{array} & 
 \begin{tabular}{@{}l}
 Let $S(\alpha,\beta) := (\alpha-1)(\beta-1)(\alpha+\beta)$.  Then:\\\\
 \pI{}: $S(\alpha,\beta)\left(S(\alpha,\beta)+8\alpha\beta\right) \neq 0$\\
 \pII{}: $S(\alpha,\beta) = -8\alpha\beta$, excl. $\pD{}$\\
 \pD{}: $(\alpha,\beta) = (-1,-1)$\\
 \pN{}: Exactly one of $\alpha=1$ or $\beta =1$ or $\beta = -\alpha$\\
 \pO{}: $(\alpha,\beta) \in \{ (1,1),(1,-1),(-1,1) \}$
 \end{tabular} 
 \\ \hline
 \sfT2 & (1,0,1) &
 \begin{array}{l}
 \begin{array}{@{}l@{}}
 \bS = x_1 \partial_{x_1} + x_2 \partial_{x_2} + 2 \alpha u \partial_u,\\
 \bT = x_2 \partial_{x_1} - x_1 \partial_{x_2} - \beta u\partial_u
 \end{array} \quad 
 \begin{array}{l}
 \widetilde\bS = \bS - \partial_{x_1} - 2\alpha\partial_u, \\
 \widetilde\bT = \bT + \partial_{x_2} + \beta\partial_u
 \end{array}\\ \hline
 \alpha \neq 0:\\
 \begin{array}{l}
 \be_1 = \frac{1}{2\alpha-1} \bS, \quad
 \be_2 = 2\alpha\bT + \beta \bS,\\
 \bf_1 = (4\alpha^2+\beta^2) \widetilde\bS, \quad
 \bf_2 = 2\alpha\widetilde\bT + \beta\widetilde\bS
 \end{array}\\
 \alpha=0:\\
 \begin{array}{l}
 \be_1 = \bT, \quad
 \be_2 = \bS, \quad
 \bf_1 = \widetilde\bS, \quad
 \bf_2 = \widetilde\bT - \beta\widetilde\bS
 \end{array}
 \end{array} &
 \begin{array}{@{}l@{}}
 \begin{array}{l}
 -2(\alpha-1)(4\alpha^2+\beta^2)^2 t^4 - 8\beta(4\alpha^2+\beta^2) t^3 \\
 \quad+ \frac{4\left(4\alpha^2 (2 \alpha+1)(\alpha-1) + \beta^2 (2 \alpha^2+3 \alpha-3)\right)}{2\alpha-1} t^2\\
 \quad+ \frac{8\beta}{2\alpha-1} t - \frac{2(\alpha-1)}{(2\alpha-1)^2}, \quad\mbox{for}\quad \alpha \neq 0
 \end{array}\\ \\ 
 \begin{array}{l}
 \beta(\beta^2+4),\quad\mbox{for}\quad \alpha = 0
 \end{array}
 \end{array} & 
 \begin{tabular}{@{}l}
 \pI{}: $4\alpha(\alpha+1)^2+(\alpha+4)\beta^2 \neq 0$, \mbox{ excl. \pN{} \& \pO{}}\\
 \pII{}: $4\alpha(\alpha+1)^2+(\alpha+4)\beta^2 = 0$, \mbox{ excl. \pD{}}\\
 \pD{}: $(\alpha,\beta) = (-1,0)$\\
 \pN{}: $\alpha = 0$, $\beta \neq 0$\\
 \pO{}: $(\alpha,\beta) = (1,0)$
 \end{tabular} 
 \\ \hline
 \sfT3 & (1,1,0) &
 \begin{array}{l}
 \begin{array}{@{}l@{}}
 \bS = x_1 \partial_{x_1} - \alpha x_2 \partial_{x_2} + u \partial_u,\\
 \bT = x_2 \partial_{x_2} + x_1 \partial_u
 \end{array} \quad
 \begin{array}{@{}l@{}}
 \widetilde\bS = \bS - \partial_{x_1} + \alpha\partial_{x_2},\\
 \widetilde\bT = \bT - \partial_{x_2} - \partial_u 
 \end{array}\\ \hline
 \begin{array}{@{}l}
 \be_1 = \frac{1}{1+\alpha} \bS + \bT, \quad
 \be_2 = \bT,\\
 \bf_1 = \widetilde\bS + (1+\alpha) \widetilde\bT, \quad
 \bf_2 = -\widetilde\bT,
 \end{array}
 \end{array} &
 -t^4 - 4t^3 - \frac{2(\alpha+3)}{\alpha+1} t^2 - \frac{4}{\alpha+1} t -\frac{1}{(\alpha+1)^2} & 
  \begin{tabular}{@{}l@{}}
 \pI{}: $\alpha \neq -1,0,8$\\
 \pII{}: $\alpha=8$ \\
 \pN{}: $\alpha=0$
 \end{tabular}
 \\ \hline
 \sfT4 & (0,\alpha^{-1/3},1) &
 \begin{array}{l}
 \begin{array}{@{}l}
 \bS = x_1 \partial_{x_1} + 2 x_2 \partial_{x_2} + 3 u \partial_u,\\
 \bT = \partial_{x_1} + x_1 \partial_{x_2} + x_2 \partial_u
 \end{array} \,
 \begin{array}{@{}l}
 \widetilde\bS = \bS - 2\alpha^{-1/3}\partial_{x_2} - 3\partial_u, \\
 \widetilde\bT = \bT - \partial_{x_1} - \alpha^{-1/3} \partial_u 
 \end{array}\\ \hline
 \begin{array}{@{}l}
 \be_1 = \bS+\frac{4}{3}\alpha^{-2/3} \bT, \quad
 \be_2 = \alpha^{-2/3} \bT,\\
 \bf_1 = \widetilde\bS + \frac{4}{3} \alpha^{-2/3} \widetilde\bT, \quad
 \bf_2 = -\frac{2}{9} (9\alpha+8)\alpha^{-2/3} \widetilde\bT,\\
 \end{array}
 \end{array}
 &
 \frac{4\left(9 t^2+(9\alpha+8)(3 t+2)\right)^2}{27\alpha(9\alpha+8)} & 
 \begin{tabular}{@{}l}
 \pD{}: $\alpha \neq 0,-\frac{8}{9}$\\
 \end{tabular}
 \\ \hline
 \sfT5 & (1,0,\epsilon)& 
 \begin{array}{l} 
 \begin{array}{@{}l}
 \bS = x_1 \partial_{x_1} + \frac{\alpha}{2} x_2 \partial_{x_2} + (\alpha-1) u \partial_u,\\
 \bT = x_1 \partial_{x_2} + 2 x_2 \partial_u
 \end{array}\\ \hline
 \begin{array}{@{}l}
 \be_1 = \frac{1}{(\alpha-1)(\alpha-2)} \bS, \quad
 \be_2 = \frac{\epsilon}{2} \bT,\\
 \bf_1 = \widetilde\bS := \bS - \partial_{x_1} - \epsilon(\alpha-1)\partial_u, \quad
 \bf_2 = \widetilde\bT := \bT - \partial_{x_2}
 \end{array}
 \end{array}
 &
\frac{\epsilon}{2}(\alpha-1) t^4 - \frac{\alpha+2}{(\alpha-1)(\alpha-2)} t^2 + \frac{2\epsilon}{(\alpha-1)(\alpha-2)^2} & 
 \begin{tabular}{@{}l}
 \pI{}: $\alpha \neq 0,1,2,4$\\
 \pD{}: $\alpha = 4$
 \end{tabular}\\ \hline
 \sfT6 & (1,0,0) & \begin{array}{l}
 \begin{array}{@{}l}
 \begin{array}{@{}l}
 \bS = x_1 \partial_{x_1} + x_2 \partial_{x_2} + (\epsilon x_1+u) \partial_u,\\
 \bT = x_1 \partial_{x_2} + 2 x_2 \partial_u
 \end{array}
 \end{array} \\ \hline
 \begin{array}{@{}l}
 \be_1 = \bS, \quad
 \be_2 = \frac{\epsilon}{2} \bT,\\
 \bf_1 = \widetilde\bS := \bS - \partial_{x_1} - \epsilon\partial_u, \quad
 \bf_2 = \widetilde\bT := \bT - \partial_{x_2}
 \end{array}
 \end{array} & \frac{\epsilon}{2}(t^4 - 8 \epsilon t^2 + 4) & \pI{}\\ \hline
 \end{array}
 \]
 \caption{LC-adapted framings, quartics, and root types for some tubes}
 \label{F:tubular-LC-Q4}
\end{table}
\end{footnotesize}
 \vfill
  \end{landscape}
 \subsection{PDE point symmetries method}
 \label{S:PDE-method}
  
 In view of \eqref{E:sym-dim}, we may confirm $\dim\,\hol(M) = 5$ for the remaining type $\sfD$ and $\sfN$ tubular cases (from Table \ref{F:tubular-LC-Q4}) via their corresponding ILC structure (Table \ref{F:tube-PDE}).  In \S\ref{S:approach}, we described how to go from $M$ to this ILC structure realized as a PDE.  In this realization, the ILC symmetries are the {\sl point} symmetries of the PDE system \cite{Olver1995}.  There is excellent functionality in the {\tt DifferentialGeometry} package in {\tt Maple} for computing symmetries -- see below.
 
 \begin{footnotesize}
 \begin{table}[h]
 \[
 \begin{array}{|c|l|l|c|c|} \hline
 \mbox{Label} & \mbox{Real affine surface} & \mbox{Complete 2nd order PDE system}\\ \hline
 \sfT1 & u = \frac{1}{x_1 x_2} & \begin{cases} 
w_{11} = e^{2\pi i/3} w_1^{5/3} w_2^{-1/3}\\
w_{12} = \half e^{2\pi i/3} w_1^{2/3} w_2^{2/3}\\
w_{22} = e^{2\pi i/3} w_1^{-1/3} w_2^{5/3}\\ 
 \end{cases}
\\
 & u = x_1 x_2^\beta\quad(\beta \neq 0,\pm1) & \begin{cases}
 w_{11} = 0\\
 w_{12} = \frac{\beta}{2} w_1^{\frac{\beta-1}{\beta}}\\
 w_{22} = \frac{\beta-1}{2} w_2 w_1^{-\frac{1}{\beta}}
 \end{cases} \\ \hline
 \sfT2 & u = \frac{1}{x_1^2+x_2^2} & \begin{cases}
 w_{11} = \frac{2^{2/3}(3 w_1^2 - w_2^2)}{4(w_1^2+w_2^2)^{1/3}}\\
 w_{12} = \frac{2^{2/3} w_1 w_2}{(w_1^2+w_2^2)^{1/3}}\\
 w_{22} = \frac{2^{2/3}(3 w_2^2 - w_1^2)}{4(w_1^2+w_2^2)^{1/3}}
 \end{cases}\\
 & \begin{array}{@{}l} u = \exp(\beta \arctan(\frac{x_2}{x_1}))\\
 \quad (\beta \neq 0)
 \end{array} & 
 \begin{cases}
 w_{11} = (\frac{w_1^2}{2} - \frac{1}{\beta} w_1 w_2)\exp(\beta\arctan(\frac{w_1}{w_2}))\\
 w_{12} = \frac{1}{2} (\frac{1}{\beta}(w_1^2-w_2^2) + w_1 w_2)\exp(\beta\arctan(\frac{w_1}{w_2}))\\
 w_{22} = (\frac{w_2^2}{2} + \frac{1}{\beta} w_1 w_2)\exp(\beta\arctan(\frac{w_1}{w_2}))\\
 \end{cases}\\ \hline
 \sfT3 & u = x_1 \ln(x_2) & \begin{cases} 
w_{11} = 0 \\
w_{12} = \half e^{-w_1}\\
w_{22} = -\half w_2 e^{-w_1}
\end{cases}\\ \hline
 \sfT4 & 
 \begin{array}{@{}l}
 (u-x_1 x_2 + \frac{x_1^3}{3})^2 
 \, = \alpha(x_2 - \frac{x_1^2}{2})^3\\
 \quad (\alpha\neq 0, -\frac{8}{9})
 \end{array} & \begin{cases} 
w_{11} = -\frac{3\sqrt{\alpha(9\alpha+8)}(w_2^2+w_1)}{8\sqrt{w_2^2+2w_1}} - \frac{9\alpha+8}{8} w_2\\
w_{12} = \frac{3\sqrt{\alpha(9\alpha+8)} w_2}{16\sqrt{w_2^2+2w_1}}
+ \frac{9\alpha+8}{16}\\
w_{22} = -\frac{3\sqrt{\alpha(9\alpha+8)}}{16\sqrt{w_2^2+2 w_1}}
\end{cases} \\ \hline
 \sfT5 & x_1 u = x_2^2 + \epsilon x_1^4 & \begin{cases}
 w_{11} = \sqrt{3\epsilon} \frac{w_2^2+2w_1}{\sqrt{w_2^2+4w_1}}\\
 w_{12} = -\sqrt{3\epsilon} \frac{w_2}{\sqrt{w_2^2+4w_1}}\\
 w_{22} = 2\sqrt{3\epsilon}\frac{1}{\sqrt{w_2^2+4w_1}}
 \end{cases}\\ \hline
 \end{array}
 \]
 \caption{PDE realizations of some tubular ILC structures}
 \label{F:tube-PDE}
 \end{table}
 \end{footnotesize}

\begin{ex}[$\sfT3$, $\alpha=0$] The surface $u = x_1 \ln(x_2)$ has tube $M$ and complexification $M^c$:
\[
 M: \quad \Re(w) = \Re(z_1) \ln(\Re(z_2)), \qquad
 M^c: \quad \frac{w+c}{2} = \frac{z_1+a_1}{2} \ln\left(\frac{z_2+a_2}{2}\right).
\]
 For $M^c$, we solve for $w$ and differentiate twice:
 \begin{align}
 &(w_1,w_2,w_{11},w_{12},w_{22}) = \left( \ln\left( \frac{z_2+a_2}{2} \right),\, 
 \frac{z_1+a_1}{z_2+a_2},\, 0,\, \frac{1}{z_2+a_2},\, -\frac{z_1+a_1}{(z_2+a_2)^2} \right). \label{X:tube2}
 \end{align}
 Eliminating the parameters $(a_1,a_2,c)$ from \eqref{X:tube2}, we arrive at the PDE system given in Table \ref{F:tube-PDE}.  Using \eqref{E:PDE-EF}, we then confirm 5-dimensional symmetry via the following commands in {\tt Maple}:
\end{ex}
\begin{small}
 \begin{verbatim}
restart: with(DifferentialGeometry): with(GroupActions):
DGsetup([z1,z2,w,w1,w2],N):
w11:=0: w12:=1/2*exp(-w1): w22:=-1/2*w2*exp(-w1):
E:=evalDG([D_z1+w1*D_w+w11*D_w1+w12*D_w2,D_z2+w2*D_w+w12*D_w1+w22*D_w2]):
F:=evalDG([D_w1,D_w2]):
sym:=InfinitesimalSymmetriesOfGeometricObjectFields([E,F],output="list");
nops(sym);
 \end{verbatim}
 \end{small}

This similarly confirms the cases in Table \ref{F:tube-PDE} without parameters.  For the remaining cases with parameters, more care is needed since the above commands should at most be assumed to treat parameters {\em generically}.  To identify possible exceptional values, we should step-by-step solve the {\sl symmetry determining equations}.  Although we could set this up as infinitesimally preserving $E$ and $F$ as above, let us indicate another standard method.  Any {\sl point symmetry} $X$ is the prolongation $Y^{(1)}$ of a vector field $Y$ on $(z_1,z_2,w)$-space $J^0(\C^2,\C)$, and we can further prolong to get a vector field $Y^{(2)}$ on the second jet-space $J^2(\C^2,\C)$.  A PDE system is a submanifold $\Sigma \subset J^2(\C^2,\C)$, and the symmetry condition is that $Y^{(2)}|_\Sigma$ is everywhere tangent to $\Sigma$.  The following code efficiently sets this up in {\tt Maple} for the $\sfT1$ case $u = x_1 x_2^\beta$ for $\beta \neq 0,\pm 1$:

\begin{small}
\begin{verbatim}
restart: with(DifferentialGeometry): with(JetCalculus):
DGsetup([z1,z2],[w],J,2):
X:=evalDG(xi1(z1,z2,w[])*D_z1+xi2(z1,z2,w[])*D_z2+eta(z1,z2,w[])*D_w[]):
X2:=Prolong(X,2):
rel:=[w[1,1]=0,w[1,2]=beta/2*w[1]^((beta-1)/beta),
    w[2,2]=(beta-1)/2*w[2]*w[1]^(-1/beta)]:
eq:=eval(LieDerivative(X2,map(v->lhs(v)-rhs(v),rel)),rel):
\end{verbatim}
\end{small}

\noindent The expression {\tt eq} must vanish identically (for arbitrary $w_1,w_2$), and this gives a {\em highly overdetermined} system of linear PDE on the three coefficient functions $\xi_1,\xi_2,\eta$ of $Y$.  Keeping in mind $\beta \neq 0,\pm 1$, we solve these equations and confirm 5-dimensional symmetry.  Similar computations were carried out for the remaining parametric cases and the result was the same.  (For more details in the $\sfT1$ and $\sfT4$ cases, see the {\tt Maple} files accompanying the {\tt arXiv} submission of this article.)

 Family $\sfT2$ can be alternatively handled.  As remarked in \cite{AffHom}, the family of {\em complex} surfaces in $\C^3$ given by $u = x_1^\alpha x_2^\beta$ are $\Aff(3,\C)$-equivalent to surfaces in the $u = \left(x_1^2+x_2^2\right)^{\gamma}
\exp\left(\delta \arctan\Big(\frac{x_2}{x_1}\Big)\right)$ family.  (Here, $\alpha,\beta,\gamma,\delta \in \C$.)  Indeed, from their affine symmetry algebras, we deduce that they are $\Aff(3,\C)$-equivalent when 
 \begin{align} \label{E:T1T2-equiv}
(\alpha,\beta) = \left(\gamma + \tfrac{i}{2} \delta, \gamma - \tfrac{i}{2} \delta\right).
\end{align}  (One can also account for the `Redundancies' as in Table \ref{F:tubular-final}.)  By Remark \ref{R:CAff-hyp}, these complex surfaces yield tubular ILC structures and when \eqref{E:T1T2-equiv} holds, they are necessarily equivalent.  (A nice exercise derives the root types for $\sfT2$ from those of $\sfT1$ using \eqref{E:T1T2-equiv}.)  But now the remaining $\sfD$ and $\sfN$ cases for $\sfT2$ are equivalent to the $\sfD$ and $\sfN$ cases for $\sfT1$, which were already treated, and so we are done.
 
 \subsection{Power series method}
 \label{S:ps-method}
 In this section, we outline a second method for the algorithmic computation of the infinitesimal symmetries of tubular CR hypersurfaces (or rather tubular ILC structures).  We express this in the language of elementary linear algebra.

 \subsubsection{Filtered linear equations}\label{ss:flineq}
Let $V$ be a {\sl filtered} vector space, i.e.\
 \[
 V =: V^{\mu_0} \supset V^{\mu_0+1} \supset V^{\mu_0+2} \supset \ldots, \qquad \bigcap_\mu V^\mu = 0.
 \]
 Let $\gr V := \bigoplus_\mu V^\mu / V^{\mu+1}$ be its {\sl associated graded} vector space.  Any subspace $W \subset V$ inherits a filtration from $V$, and note that $\dim \gr W = \dim\,W $.

Let $U$ be another filtered vector space and $\phi\colon V \to U$ a {\sl filtration-homogeneous linear map of degree $k$}, i.e.\ $\phi(V^\mu)\subset U^{\mu+k}$ for all $\mu\in\bbZ$. Denote by $\gr \phi\colon \gr V\to \gr U$ the corresponding graded map (of degree $k$).
In applications, we often know the map $\gr\phi$ and its kernel $\ker\gr \phi$, and would like to use this information in order to determine $\ker\phi$. 

\begin{lem}\label{lem1} $\gr\ker\phi\subset \ker\gr\phi$.
\end{lem}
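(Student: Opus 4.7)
The plan is to unfold the definitions. The inclusion $\ker\phi\subset V$ inherits the filtration $(\ker\phi)^\mu := \ker\phi \cap V^\mu$, so the first step is to identify $\gr\ker\phi$ with a subspace of $\gr V$. For each $\mu$, the natural map
\[
(\ker\phi\cap V^\mu)/(\ker\phi\cap V^{\mu+1}) \;\longrightarrow\; V^\mu/V^{\mu+1}
\]
is injective: if $v\in \ker\phi\cap V^\mu$ lies in $V^{\mu+1}$, then $v\in \ker\phi\cap V^{\mu+1}$. Summing over $\mu$ yields a canonical injection $\gr\ker\phi \hookrightarrow \gr V$, under which we regard $\gr\ker\phi$ as a subspace.

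Next, for $v\in\ker\phi\cap V^\mu$ let $[v]_{V}\in V^\mu/V^{\mu+1}$ denote its class. By construction of $\gr\phi$ as the degree $k$ map induced on associated graded spaces by $\phi$, one has $\gr\phi([v]_{V}) = [\phi(v)]_{U} \in U^{\mu+k}/U^{\mu+k+1}$. Since $\phi(v)=0$, this class is zero, so $[v]_V\in\ker\gr\phi$. This proves the containment claimed.

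The statement is almost tautological, so no obstacle is anticipated; the only subtlety worth highlighting is that the containment is typically \emph{strict}. For instance, an element $w\in V^\mu$ whose leading term $[w]_V$ is killed by $\gr\phi$ need not lift to an actual element of $\ker\phi$; it only guarantees that $\phi(w)\in U^{\mu+k+1}$, so further corrections in lower filtration degrees are required. This point, which is the substance of the later applications of the lemma, is separate from the elementary proof above.
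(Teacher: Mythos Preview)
Your proof is correct and follows essentially the same approach as the paper's: take an element of $\ker\phi$, pass to its class in the appropriate graded piece, and observe that $\gr\phi$ sends it to zero because $\phi$ already did. You are somewhat more explicit than the paper in justifying the identification of $\gr\ker\phi$ with a subspace of $\gr V$, and your closing remark about strictness of the inclusion anticipates exactly what the paper notes immediately after the lemma.
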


\begin{proof}
 Let $v\in \ker\phi$. Let $\mu$ be the largest integer such that $v\in V^\mu$. Then $\phi(v)\in U^{\mu+k}$ and $(\gr \phi) (v+ V^{\mu+1}) = \phi(v)+U^{\mu+k+1} = 0$, and thus $v+V^{\mu+k+1}\in \ker\gr\phi$. 
\end{proof}
The inclusion in Lemma~\ref{lem1} can be strict, so $\dim \ker \gr \phi$ is only an upper bound for $\dim \ker\phi = \dim \gr \ker\phi$.  
 \subsubsection{Symmetry equations as filtered linear equations}
Given a real hypersurface $M \subset \C^3$, its complexification is a complex hypersurface $M^c \subset \C^3 \times \C^3$ graphed as\footnote{In this section, we use the complex variables $(x,y,z,a,b,c)$ instead of $(z_1,z_2,w,a_1,a_2,c)$.}:
\begin{align}
	M^c
	\colon\ \ \ \ \
	z
	\,=\,
	Q(x,y,a,b,c),
\end{align}
with $Q$ analytic, \emph{i.e.} expandable in a 
converging power series. We may assume $0 \in M^c$, \emph{i.e.} $0 = Q(0, 0, 0, 0, 0)$.
We consider $M^c$ up to the pseudogroup of local analytic
transformations:
\begin{multline}
	\label{transformations-xyz-abc}
	(x,y,z,a,b,c)
	\mapsto 
	\big(
	x'(x,y,z),y'(x,y,z),z'(x,y,z),
	a'(a,b,c),b'(a,b,c),c'(a,b,c)
	\big).
\end{multline}
The Lie algebra $\sym(M^c)$ of infinitesimals symmetries consists of those vector fields
\begin{align}\label{vf-xyz-abc}
	\begin{split}
		L &\,=\, 
		X(x,y,z)\,\partial_x
		+
		Y(x,y,z)\,\partial_y
		+
		Z(x,y,z)\,\partial_z
		\\
		&\qquad +
		A(a,b,c)\,\partial_a
		+
		B(a,b,c)\,\partial_b
		+
		C(a,b,c)\,\partial_c
	\end{split}
\end{align}
that are tangent to $M^c$.  We will make the assumption that $M^c$ is \textsl{rigid:}
\begin{align}
	z
	\,=\,
	-\,c
	+
	F(x,y,a,b),
\end{align}
with $0 = F(0,0,0,0)$.  ({\sl Tubes} form the subclass $z = -c + F(x+y,a+b)$.)  The rigidity assumption is justified when $M^c$ is homogeneous, whence there exists at least one $L \in \sym(M^c)$ with $L(0)$ not tangent to the $4$-dimensional 
contact distribution. After a straightening, one can make $L = \partial_z - \partial_c$, and tangency to $\{ z = Q\}$ forces $Q = - c + F$ as above.

 \begin{remark}
Up to the transformations~\eqref{transformations-xyz-abc}, we can assume that $F$ does not contain constant or linear terms in $x,y,a,b$. Specifying second order terms, we get:
\begin{align}\label{eq-def-G}
	z
	\,=\,
	-\,c + \ell(x,y,a,b)
	+
	G(x,y,a,b),
\end{align}
with quadratic term $\ell(x,y,a,b) = e\,xa+f\,xb+g\,ya+h\,yb$ for $e,f,g,h \in \C$
satisfying
$0 \neq \big\vert \begin{smallmatrix} e & f \\ g & h 
\end{smallmatrix}\big\vert$ by Levi non-degeneracy of the original hypersurface $M \subset \bbC^3$, and $G$ containing higher order terms in $x,y,a,b$. Using linear transformations of $(x,y)$ and $(a,b)$, we can assume that $\ell(x,y,a,b) = xa+yb$.

 \end{remark}
Now, express the tangency condition as:
\begin{align}
	\aligned
	0
	&
	\,\equiv\, \eqdef_F(L) := 
	L
	\big(
	-z-c+F(x,y,a,b)
	\big)
	\Big\vert_{z=-c+F}
	\\
	&
	\,\equiv\,
	\big[
	X\,F_x
	+
	Y\,F_y
	-
	Z
	+
	A\,F_a
	+
	B\,F_b
	-
	C
	\big]
	\Big\vert_{z=-c+F},
	\endaligned
\end{align}
which reads as the identical vanishing of the following
power series in $5$ variables $(x,y,a,b,c)$:
\begin{align}
	\aligned
	0
	&
	\,\equiv\,
	X\big(x,y,-c+F(x,y,a,b)\big)\,
	F_x(x,y,a,b)
	\\
	&
	\ \ \ \ \
	+
	Y\big(x,y,-c+F(x,y,a,b)\big)\,
	F_y(x,y,a,b)
	-
	Z\big(x,y,-c+F(x,y,a,b)\big)
	\\
	&
	\ \ \ \ \
	+
	A(a,b,c)\,F_a(x,y,a,b)
	+
	B(a,b,c)\,F_b(x,y,a,b)
	-
	C(a,b,c).
	\endaligned
\end{align}

 Now $\phi(L) = \eqdef_F(L)$ defines a linear map $\phi 
 : V \to U$ from the Lie algebra $V$ of all analytic vector fields~\eqref{vf-xyz-abc} to the space $U$ of all analytic functions in $(x,y,a,b,c)$. Then we have
\begin{align}
	\sym(M^c) = \ker \phi.
\end{align}
Expanding $\phi(L)$ in a power series and evaluating the coefficients of this series degree by degree, we can view the computation of $\ker \phi$ as an (infinite) system of linear equations on the coefficients of the power series expansion of $L$, where the coefficients of these linear equations are formed by some algebraic expressions of the power series coefficients of $F$. 

We now endow $V$ and $U$ with filtrations.  Assigns weights $(1,1,2,1,1,2)$ to $(x,y,z,a,b,c)$, and $(-1,-1,-2,-1,-1,-2)$ to $(\partial_x, \partial_y, \partial_z, \partial_a, \partial_b, \partial_c)$.  Define $V^\mu \subset V$ and $U^\mu \subset U$ as the weight $\geq \mu$ subspaces.  (Note that $V = V^{-2}$, while $U = U^0$.)  Then $\phi$ is filtration-homogeneous and restricts to $\phi : V^\mu \to U^{\mu+2}$, i.e.\ it has degree $+2$.

The associated graded spaces $\gr V$ and $\gr U$ can be identified with polynomial vector fields of the form~\eqref{vf-xyz-abc} and polynomials in $(x,y,a,b,c)$ respectively.  An elementary computation shows that
\begin{align}
\gr \phi = \eqdef_{\ell},
\end{align}
where the right hand side defines the equations for the infinitesimal symmetries of the flat model $\{z = -c + \ell \}$, which is defined by a homogeneous equation of weight $2$. 

The symmetry algebra of the flat model is well-known to be the 15-dimensional Lie algebra of polynomial vector fields having dimensions $(1,4,5,4,1)$ in degrees $(-2,-1,0,1,2)$ respectively.  From Lemma~\ref{lem1}, we immediately recover the well-known fact that $\dim\,\sym(M^c) \leq 15$, and each symmetry $L$ is uniquely determined by its terms of weight $\le 2$.

Our aim is to use knowledge of $\ker \eqdef_{\ell}$ to effectively compute $\ker \phi$.  Fixing an integer parameter $\nu$, define the following {\em finite-dimensional} quotient vector spaces:
\begin{align*}
 V(\nu) = V  \mod V^{\nu+1}, \quad
 U(\nu) = U \mod U^{\nu+1},
\end{align*}
which inherit filtrations from $V$ and $U$.  Now $\phi$ induces a filtration-homogeneous map of degree +2:
\begin{align}
 \begin{split}
 \phi(\nu) : V(\nu) &\longrightarrow U(\nu+2)\\
 [L] &\longmapsto [\eqdef_{F}(L)],
 \end{split}
\end{align}
where brackets denote the respective equivalence classes.
Then $\ker \phi(\nu)$ approximates $\sym(M^c) = \ker(\phi)$ modulo terms of weight $\ge\nu+1$. For increasing $\nu$, we have that $\dim\,\ker\phi(\nu)$ is a decreasing sequence of integers stabilizing at $\dim \sym(M^c)$. 

\begin{remark} For the tubes in Table \ref{F:tubular-final}, this sequence stabilizes already for $\nu=4$.
\end{remark}
 \subsubsection{Symmetry computation}
Fix $\nu$, and for ease of exposition in this subsection, set $\phi:=\phi(\nu)$, $V:=V(\nu)$, $U:=U(\nu+2)$. The following is an effective algorithm for computing $\ker\phi$ based on the knowledge of $\gr\phi$:
\begin{enumerate}
	\item Find $\ker\gr\phi\subset \gr V$;
	\item Choose a subspace $\mathring V\subset V$ with
	$\gr V = \gr \mathring V \oplus \ker\gr\phi$.  (This means that $\gr\phi$ is injective on $\gr \mathring V$.  By Lemma~\ref{lem1}, $\phi$ is also injective on $\mathring V$.)	
	\item Compute $\gr(\phi(\mathring V)) = (\gr\phi)(\gr\mathring V)$. Choose a subspace $\mathring U\subset U$ with $\gr U = (\gr\phi)(\gr\mathring V)\oplus \gr\mathring U$, so that the induced maps $\mathring{V} \to U/\mathring U$ and $\gr \mathring{V} \to \gr U / \gr \mathring{U}$ are isomorphisms.  Thus:
	\begin{align}
 \begin{array}{cccc} & \gr V & \stackrel{\gr\phi}{\xrightarrow{\hspace*{2cm}}} &  \gr U\\
 & \verteq & & \verteq\\[-0.15in]
 & \gr \mathring{V} & \stackrel{\gr\phi|_{\gr \mathring{V}}}{\xrightarrow{\hspace*{2cm}}} & (\gr \phi)(\gr \mathring{V})\\
 & \oplus & & \oplus\\
 \gr \ker\phi \subset & \ker \gr \phi & & \gr \mathring{U}
 \end{array} \quad\quad
 \begin{array}{cccc}
 V & \stackrel{\phi}{\xrightarrow{\hspace*{2cm}}} & U\\
 \rotatebox{90}{$\subset$} &&\downarrow\\[-0.1in]
 \mathring{V} & \underset{\cong}{\stackrel{\mathring\phi}{\xrightarrow{\hspace*{2cm}}}} & U / \mathring{U}\\\\\\
 \end{array}
 \end{align}
	\item Consider the map $\mathring\phi\colon V\to U/\mathring U$. By what precedes, $\ker\mathring\phi$ has the same dimension as $\ker\gr\mathring \phi$, is complementary to $\mathring V$ and contains $\ker\phi$.
	\item Finally, consider the map $\phi\colon \ker\mathring\phi \to \mathring U$ and compute its kernel.
\end{enumerate}	

The key computational advantage of this approach is that the first four steps do not involve any parameter dependency introduced by $G$ in~\eqref{eq-def-G}.  This allows one to reduce the parametric analysis for $\dim\,\sym(M^c)$ to the last step in the above algorithm.  Let us describe this in more detail.
	
Choose bases (consisting of homogeneous elements) of $\gr V = \gr \mathring V \oplus \ker \gr \phi$ and $\gr U= (\gr\phi)(\gr \mathring V)\oplus \gr \mathring U$ adapted to the given decompositions.  Then extend these bases to $V$ and $U$ in a manner compatible with the choices of subspace $\mathring V \subset V$ and $\mathring U \subset U$.  In these bases,
\begin{align}
 \gr \phi = \begin{pmatrix} A & 0\\ 0 & 0 \end{pmatrix}, \quad
 \phi = \begin{pmatrix} B_{11} & B_{12} \\ B_{21} & B_{22} \end{pmatrix}.
\end{align}
 Here, $A$ and $B_{11}$ are non-degenerate and correspond to the isomorphisms $\gr \mathring V\to \gr U/\gr\mathring U$ and $\mathring V \to U/\mathring U$ respectively.  Moreover, $\gr B_{11} = A$ by construction, so it does not depend on the function $G$ in the defining equation~\eqref{eq-def-G} for $M^c$. This means that computation of the kernel $\mathring\phi\colon V\to U/\mathring U$ does not introduce any dependency on the parameters that may appear in $G$. Thus, the dependency of $\dim\ker\phi$ on $G$ appears only on step~(5), which significantly reduces the computational complexity.
	
By a careful choice of the subspaces $\mathring V$ and $\mathring U$, we reduce computation of $\ker\phi(\nu)$ for $\nu=2,3,4$ to systems of $5$, $25$, $75$ linear equations respectively on $\dim \ker\gr\phi(\nu) = 15$ variables.  (For sample details in the $\sfT4$ case, see the {\tt Maple} files supplementing the {\tt arXiv} submission of this article.)  We note that the direct analysis of the corank of the map $\phi(4) : V(4) \to U(6)$ without applying the techniques of filtered linear equations would result in dealing with $\dim\, U(6) = 130$ linear equations in $\dim\, V(4) = 80$ variables.
 \subsection{Conclusion}
 As described in \S\ref{S:PDE-method} and \S\ref{S:ps-method}, we used two different methods to confirm:

\begin{prop} \label{P:tubes-almost-done}
 Any tubular hypersurface $M^5 \subset \C^3$ from Table \ref{F:tubular-final} has $\dim\,\hol(M) = 5$.
\end{prop}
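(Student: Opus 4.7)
The plan is to exploit the equality $\dim_\bbR \hol(M) = \dim_\bbC \sym(M^c)$ from \eqref{E:sym-dim} so as to work throughout with the complexified ILC structure on $M^c$, where the Lie-algebraic and PDE machinery developed earlier directly applies. For each of the six families $\sfT1$--$\sfT6$, the explicit generators listed in Table \ref{F:tubular-final} give $\dim\,\hol(M) \geq 5$, so it only remains to establish the matching upper bound $\dim\,\hol(M) \leq 5$. The key input is the root-type computation of the fundamental quartic $\cQ_4$ already carried out in Table \ref{F:tubular-LC-Q4} using the LC-adapted framings and \eqref{E:Phi-ST}.

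For every parameter value in which Table \ref{F:tubular-LC-Q4} assigns root type $\pI{}$ or $\pII{}$, the sharp bound \eqref{E:RootTypeSym} gives $\dim\,\sym(M^c) \leq 5$ immediately, which combined with the lower bound produces equality. This disposes at once of most of the families, including all of $\sfT6$, all of $\sfT3$ with $\alpha \ne -1,0,8$, and the generic ranges inside $\sfT1$, $\sfT2$, $\sfT5$.

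The remaining parameter values have $\cQ_4$ of root type $\pD{}$ or $\pN{}$, for which \eqref{E:RootTypeSym} only gives 7 or 8 as upper bounds. These residual cases form a finite list: the entire $\sfT4$ family, the $\sfT5$ case $\alpha = 4$, the $\sfT3$ cases $\alpha = 0$ and $\alpha = -1$ (the latter already excluded as degenerate), the isolated $\sfT2$ points with $\alpha = 0$ or $(\alpha,\beta)=(-1,0)$, and the $\sfT1$ points lying on the curves $\alpha=1$, $\beta=1$, $\beta=-\alpha$, or $(\alpha,\beta)=(-1,-1)$. For each of these, I would apply the PDE point-symmetry method of \S\ref{S:PDE-method}: translate the tube into the complete 2nd-order PDE system using Table \ref{F:tube-PDE} (or derive it ab initio by solving $\Phi(z,a)=0$ for $w$ and differentiating twice), then directly compute the symmetry determining system via prolongation of a candidate $Y = \xi^i \partial_{z_i} + \eta\,\partial_w$ to $J^2(\bbC^2,\bbC)$, restrict to $\Sigma = \{w_{ij} = f_{ij}\}$, and solve the resulting highly overdetermined linear PDE system on $(\xi^1,\xi^2,\eta)$. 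Then I would cross-check with the power-series/filtered linear-equations method of \S\ref{S:ps-method}, stabilizing at $\nu = 4$, which avoids any spurious generic assumptions about parameters.

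The main obstacle is precisely the parametric dependence: symbolic symmetry solvers handle parameters only generically, so exceptional values could in principle escape detection. To rule this out, the prolongation equations must be solved step-by-step with the parameters kept symbolic, so that every branch produced by vanishing coefficients is tracked explicitly. Here the structural advantage of \S\ref{S:ps-method} becomes decisive: the non-degenerate block $B_{11}$ of $\phi(\nu)$ is independent of the higher-order data $G$ in \eqref{eq-def-G}, so all parametric dependence is pushed into the final finite linear system on $\ker \mathring\phi$, whose rank can be analysed cleanly. Using both methods in parallel provides an independent confirmation for each residual case, after which the matching upper bound $\dim\,\hol(M) \leq 5$ holds throughout Table \ref{F:tubular-final}, completing the proposition. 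The $\sfT2$ family admits the additional shortcut, noted at the end of \S\ref{S:PDE-method}, of being $\Aff(3,\bbC)$-equivalent via \eqref{E:T1T2-equiv} to complex $\sfT1$ surfaces, so its $\pD{}$ and $\pN{}$ points reduce directly to those of $\sfT1$.
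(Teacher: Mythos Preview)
Your proposal is correct and follows essentially the same approach as the paper: first dispose of root types $\pI{}$ and $\pII{}$ via the sharp bound \eqref{E:RootTypeSym}, then handle the residual $\pD{}$ and $\pN{}$ cases by the PDE point-symmetry computation of \S\ref{S:PDE-method} cross-checked with the filtered power-series method of \S\ref{S:ps-method}, including the $\sfT2$-to-$\sfT1$ reduction via \eqref{E:T1T2-equiv}. The paper's proof is exactly this two-stage strategy, carried out computationally with the supporting {\tt Maple} files.
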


Finally, we address whether there is any redundancy in our (tubular) list.  The following slightly weakens the `uniqueness' hypothesis from \cite[Prop.4.1]{KL2019}.  (The proof is the same.)
 
 \begin{prop} \label{P:base-equiv} Let $M_1, M_2 \subset \C^{n+1}$ be two tubular hypersurfaces over affinely homogeneous bases $\cS_1, \cS_2 \subset \R^{n+1}$.  Suppose that $M_1$ and $M_2$ are holomorphically simply-transitive and that $\langle i\partial_{z_1},\ldots,i\partial_{z_{n+1}} \rangle$ is a {\em characteristic}\footnote{An ideal in a Lie algebra is {\em characteristic} if it is preserved by all automorphisms of the Lie algebra.} $(n+1)$-dimensional abelian ideal in $\hol(M_1)$ and $\hol(M_2)$.  Then $M_1$ and $M_2$ are locally biholomorphically equivalent if and only if their bases are locally affinely equivalent.
 \end{prop}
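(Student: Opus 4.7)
The plan is to treat the two implications separately, with essentially all the work going into the `only if' direction. The `if' direction is immediate: any affine equivalence $x \mapsto Cx + b$ with $C \in \GL(n+1,\R)$ mapping $\cS_1$ onto $\cS_2$ extends to the complex-affine biholomorphism $z \mapsto Cz + b$ of $\C^{n+1}$, and since $C$ is real, $\Re(Cz + b) = C\,\Re z + b$, so this restricts to a biholomorphism $M_1 \to M_2$.

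For the converse, I start from a local biholomorphism $\Phi$ of open subsets of $\C^{n+1}$ sending $M_1$ into $M_2$, and exploit the induced Lie algebra isomorphism $\Phi_* : \hol(M_1) \to \hol(M_2)$. The characteristic hypothesis, playing its essential role, forces $\Phi_*(\fa_1) = \fa_2$, where $\fa_j := \langle i\partial_{z_1^{(j)}}, \dots, i\partial_{z_{n+1}^{(j)}}\rangle_\R$. The key observation I plan to exploit is that although $\hol(M_j)$ is only a \emph{real} Lie algebra, pushforward of $(1,0)$-fields by the holomorphic map $\Phi$ is $\C$-linear, so $\Phi_*(iX) = i\,\Phi_*(X)$. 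Combined with a real-linear relation $\Phi_*(i\partial_{z_k^{(1)}}) = \sum_{\ell} r_{k\ell}\, i\partial_{z_\ell^{(2)}}$ with $r_{k\ell} \in \R$, I obtain $\Phi_*(\partial_{z_k^{(1)}}) = \sum_\ell r_{k\ell}\, \partial_{z_\ell^{(2)}}$. Matching this against the direct expression $\Phi_*(\partial_{z_k^{(1)}}) = \sum_\ell \frac{\partial \Phi_\ell}{\partial z_k}\, \partial_{z_\ell^{(2)}}$ reads off $\partial \Phi_\ell / \partial z_k \equiv r_{k\ell}$ as real constants. Hence $\Phi(z) = Cz + d$ with $C \in \GL(n+1,\R)$ and $d \in \C^{n+1}$, so $\Re \Phi(z) = C\,\Re z + \Re d$ restricts to an affine equivalence $\cS_1 \to \cS_2$.

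The genuinely non-routine step is seeing how the characteristic hypothesis converts a Lie-algebraic statement (invariance of an abelian ideal) into the \emph{real} linear constraint on $\Phi_*$ at the distinguished vector fields $i\partial_{z_k}$; the $\C$-linearity of holomorphic pushforward then transfers this to a constraint on $\partial_{z_k}$, forcing $\Phi$ to have constant real Jacobian. Without the characteristic property, $\Phi_*$ could conceivably send $\fa_1$ to a different abelian ideal of $\hol(M_2)$, and the Jacobian of $\Phi$ could be genuinely complex, producing biholomorphic equivalences of tubes not arising from affine equivalences of bases. Beyond this, the argument involves only the standard identification of the affine symmetry data on $\cS_j$ with the $\fa_j$-normalizer data in $\hol(M_j)$, so the same scheme as \cite[Prop.4.1]{KL2019} goes through verbatim.
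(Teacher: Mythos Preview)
Your proof is correct and matches the approach the paper invokes (the paper gives no argument of its own, deferring to \cite[Prop.4.1]{KL2019} with the remark that the proof is the same). The key maneuver you identify---using the $\C$-linearity of holomorphic pushforward to transfer the real-linear relation $\Phi_*(i\partial_{z_k^{(1)}}) = \sum_\ell r_{k\ell}\, i\partial_{z_\ell^{(2)}}$ to the fields $\partial_{z_k}$ themselves, thereby forcing $\Phi$ to have constant real Jacobian---is exactly the intended one.
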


 We confirm the characteristic property via corresponding ILC data $(\fg;\fe,\ff)$ and $(\fa,\tau)$:
 \begin{itemize}
 \item $\sfT1,\sfT2,\sfT3,\sfT6$: $\fa$ is the derived algebra of $\fg$.
 \item $\sfT4,\sfT5$: $\fa$ is the centralizer of the (2-dimensional) second derived algebra of $\fg$.
 \end{itemize}
 This implies that $\fa \subset \fg$ is characteristic, so the corresponding abelian ideal in $\hol(M)$ is characteristic, and hence Proposition \ref{P:base-equiv} applies.  From the DKR classification \cite{AffHom}, there is no affine equivalence between $\cS_1$ and $\cS_2$ lying in different families among $\sfT1$--$\sfT6$.  For $\cS_1$ and $\cS_2$ in the same family, we can assess affine equivalence by asking if $\aff(\cS_1)$ and $\aff(\cS_2)$ are conjugate in $\mathfrak{aff}(3,\R)$.  We leave this as a straightforward exercise for the reader.  This gives rise to the `{\sl Redundancy}' conditions in Table \ref{F:tubular-final}, e.g.\ in $\sfT1$, $(\alpha, \beta) \sim (\frac{1}{\alpha}, -\frac{\beta}{\alpha})$ is induced from the swap $(x_1,x_2,u) \mapsto (u,x_2,x_1)$.\\

The proof of Theorem \ref{thm-main} is now complete.
 \section*{Acknowledgements}
 \thispagestyle{empty}

The authors acknowledge the use of the DifferentialGeometry package in Maple. The research leading to these results has received funding from the Norwegian Financial Mechanism 2014-2021 (project registration number 2019/34/H/ST1/00636), the Polish National Science Centre (NCN) (grant number 2018/29/B/ST1/02583), and the Troms\o{} Research Foundation (project ``Pure Mathematics in Norway'').

\end{document}